\newcommand{\ms}[1]{\mbox{\tiny$#1$}}
\newcommand{\epsh}[2]
          {\begin{array}{c} \hspace{-1.3mm}
         \raisebox{-4pt}{\epsfig{figure=#1,height=#2}}
         \hspace{-1.9mm}\end{array}}
\newcommand{\lk}{\operatorname{lk}}
\newcommand{\coev}{\stackrel{\longrightarrow}{\operatorname{coev}}}
\newcommand{\ev}{\stackrel{\longrightarrow}{\operatorname{ev}}}
\newcommand{\tev}{\stackrel{\longleftarrow}{\operatorname{ev}}}
\newcommand{\tcoev}{\stackrel{\longleftarrow}{\operatorname{coev}}}
\newcommand{\Id}{\operatorname{Id}}
\newcommand{\qdim}{\operatorname{qdim}}
\newcommand{\Ker}{\operatorname{Ker}}
\newcommand{\Alg}{\operatorname{Alg}}
\newcommand{\End}{\operatorname{End}}
\newcommand{\Hom}{\operatorname{Hom}}
\newcommand{\tr}{\operatorname{tr}}
\newcommand{\ptr}{\operatorname{ptr}}
\newtheorem{theo}{Theorem}[section]
\newtheorem{Def}[theo]{Definition}
\newtheorem{The}[theo]{Theorem}
\newtheorem{Lem}[theo]{Lemma}
\newtheorem{Pro}[theo]{Proposition}
\newtheorem{HQ}[theo]{Corollary}
\newtheorem{rmq}[theo]{Remark}
\begin{document}
\title[Topological invariants from $\mathcal{U}_{\xi}\mathfrak{sl}(2|1)$]{Topological invariants from quantum group $\mathcal{U}_{\xi}\mathfrak{sl}(2|1)$ at roots of unity}

%\author{Bertrand Patureau-Mirand}
%\address{Laboratoire des Math\'ematiques de Bretagne Atlantique, Universit\'e de Bretagne Sud}
%\email{bertrand.patureau@univ-ubs.fr}

\author{Ngoc-Phu HA}        % Tac gia
\address{Laboratoire de Math\'ematiques de Bretagne Atlantique, Universit\'e de Bretagne Sud, BP 573, 56017 Vannes, France}
\email{ngoc-phu.ha@univ-ubs.fr}
%\date{01/10/2015}          % Ngay

\maketitle

\begin{abstract}
In this article we construct link invariants and 3-manifold invariants from the quantum group associated with Lie superalgebra $\mathfrak{sl}(2|1)$. This construction based on nilpotent irreducible finite dimensional representations of quantum group $\mathcal{U}_{\xi}\mathfrak{sl}(2|1)$ where $\xi$ is a root of unity of odd order \cite{BaDaMb97}. These constructions use the notion of modified trace \cite{NgJkBp11} and relative $\mathit{G}$-modular category \cite{FcNgBp14}.
\end{abstract}
%-----------------------------------
\vspace{25pt}

MSC:	57M27, 17B37

Key words: Lie superalgebra, quantum group, link invariant, $3$-manifold.

\section{Introduction}
	The vanishing of the dimension of an object $V$ in a ribbon category $\mathscr{C}$ is an obstruction when one studies the Reshetikhin - Turaev link invariant. If the dimension of a simple object $V$ of $\mathscr{C}$ is zero, then the quantum invariants of all (framed oriented) links with components labelled by $V$ are equal to zero, i.e. they are trivial. To overcome this difficulty, the authors N. Geer, B. Patureau-Mirand and V. Turaev introduced the notion of a modified dimension (see \cite{NgBpVt09}). The modified dimension may be non-zero when $\dim_{\mathscr{C}}(V) = 0$. Using the modified dimension, for example on the class of projective simple objects, they defined an isotopy invariant $F^{'}(L)$ (the renormalized Reshetikhin-Turaev link invariant) for any link $L$ whose components are labelled with objects of $\mathscr{C}$ under the only assumption that at least one of the labels belongs to the set of projective ambidextrous objects. Here $F^{'}(L)$ is a nontrivial link invariant (see \cite{NgBpVt09}). This modified dimension is used to construct the quantum invariants in \cite{FcNgBp14}, \cite{NgBPm12}.

	The existence of a modified dimension relates strictly with the definition of modified traces (see \cite{NgJkBp11}). In the article \cite{NgJkBp13}, the authors showed that a necessary and sufficient condition for the existence of a modified trace on an ideal generated by a simple object $J$ is that $J$ is an ambidextrous object.

	The Lie superalgebras (see \cite{Kac77}) are the generalizations of Lie algebras used by physicists to describe supersymmetry. Deformations of these superalgebras and their representations are partially known. For Lie superalgebra $\mathfrak{sl}(2|1)$, the irreducible representations of its deformations $\mathcal{U}_{\xi}\mathfrak{sl}(2|1)$ at roots of unity are described in \cite{BaDaMb97}. Using these representations and developing the idea of modified traces open up the method for constructing a quantum invariant of framed links with components labelled by irreducible representations. 
	
	The aim of this article is to construct a link invariant and a $3$-manifold invariant from quantum group $\mathcal{U}_{\xi}\mathfrak{sl}(2|1)$ at the root of unity. Note that the Lie superalgebra $\mathfrak{sl}(2|1)$ having superdimension zero, $\mathfrak{sl}(2|1)$-weight functions are trivial. Hence combining them with the Kontsevich integral or the LMO invariant also give trivial link and $3$-manifold invariants. The paper contains five sections. In section 2, we recall the monoidal category, pivotal category, braided category and, ribbon category definitions. In section 3, we describe the quantum superalgebra $\mathcal{U}_{\xi}\mathfrak{sl}(2|1)$ where ${\xi}$ is a root of unity of the odd order and by adding two elements $h_1, h_2$ to $\mathcal{U}_{\xi}\mathfrak{sl}(2|1)$, we have the Hopf superalgebra $\mathcal{U}_{\xi}^{H}\mathfrak{sl}(2|1)$. This complement helped us to construct the non semi-simple ribbon category $\mathscr{C}^{H}$ of the nilpotent simple finite dimensional representations of $\mathcal{U}_{\xi}^{H}\mathfrak{sl}(2|1)$. In section 4 we prove that a typical module over $\mathcal{U}_{\xi}^{H}\mathfrak{sl}(2|1)$ is an ambidextrous module and that a modified trace exists on the ideal of projective modules \texttt{Proj}. This modified trace will be used to construct a link invariant. In section 5, we prove that the category $\mathscr{C}^{H}$ is $\mathit{G}$-modular relative (\cite{FcNgBp14}) and we construct a $3$-manifold invariant using this property.

\subsection*{Acknowledgments}
I would like to thank B. Patureau-Mirand, my thesis advisor, who helped me with this work, and who gave me the motivation to study mathematics. I would also like to thank my professors and friends in the laboratory LMBA of the Universit\'e de Bretagne Sud.
%%%%%%%%%%%%%%%%%%%%%%%%%%%%%%%%%%%%%%%%%%%%%%%%%%%%%%%%%%%%%%%%%%%%%%%%%%%%%%%%%%%%%%%%%%
% \input{Section1}
\section{Preliminaries}       % Muc dau tien

\subsection{Monoidal category}
	\begin{Def}
	A monoidal category $\mathscr{C}$ is a category enhanced with a bifunctor called tensor product $\cdot \otimes \cdot: \mathscr{C} \times \mathscr{C} \rightarrow \mathscr{C}$ and a unity object $\mathbb{I}$ such that
	\[ \mathbb{I} \otimes \cdot = \cdot \otimes \mathbb{I} = \Id_{\mathscr{C}} \quad  \text{and} \quad  ( \cdot 		\otimes \cdot ) \otimes \cdot = \cdot \otimes ( \cdot \otimes \cdot ). \]
	\end{Def}
%---------------------------------------------------------------
	We write $V \in \mathscr{C}$ to denote an object $V$ in the category $\mathscr{C}$ and call $\Hom_{\mathscr{C}}(V, W)$ the morphisms in $\mathscr{C}$ from $V \in \mathscr{C}$ to $W \in \mathscr{C}$ and $\End_{\mathscr{C}}(V)=\Hom_{\mathscr{C}}(V,V)$.

	We say that $\mathscr{C}$ is a monoidal $\mathbb{K}$-linear category if for all $V,W \in \mathscr{C}$, the morphisms $\Hom_{\mathscr{C}}(V,W)$ form a $\mathbb{K}$-module and the composition and the tensor product are bilinear and $\End_{\mathscr{C}}(\mathbb{I})\cong \mathbb{K}$. An object $V \in \mathscr{C}$ is said to be {\em simple} if $\End_{\mathscr{C}}(V)\cong \mathbb{K}$ as a unitary $\mathbb{K}$-algebra. An object $W \in \mathscr{C}$ is a direct sum of $V_{1}, ..., V_{n}\in \mathscr{C}$ if there is for $i=1, ..., n, f_{i} \in \Hom_{\mathscr{C}}(V_{i}, W), g_{i} \in \Hom_{\mathscr{C}}(W, V_{i})$ such that $g_{i}\circ f_{i}= \Id_{V_{i}}, g_{i}\circ f_{j}=0$ for $i \neq j$ and $\sum_{i=1}^{n}f_{i}\circ g_{i}=\Id_{W}$. An object $W \in \mathscr{C}$ is {\em semi-simple} if it is a direct sum of simple objects. The category $\mathscr{C}$ is {\em semi-simple} if all objects are semi-simple and $\Hom_{\mathscr{C}}(V,W)=\{0\}$ 
for any pair of non-isomorphic simple objects in $\mathscr{C}$.
%%%%%%%%%%%%%%%%%%%----------------------------
\subsection{Pivotal category} 
\begin{Def}
	Let $\mathscr{C}$ be a monoidal category and $A,B \in \mathscr{C}$. A duality between $A$ and $B$ is given by a pair of morphisms $(\alpha \in \Hom_{\mathscr{C}}(\mathbb{I}, B \otimes A), \beta \in \Hom_{\mathscr{C}}(A \otimes B, \mathbb{I}))$ such that
\[ (\beta \otimes \Id_{A})\circ (\Id_{A}\otimes \alpha)=\Id_{A} \quad \text{and} \quad (\Id_{B} \otimes \beta)\circ (\alpha \otimes \Id_{B})=\Id_{B}.  \]
\end{Def}
	A {\em pivotal} category (or {\em sovereign}) is a strict monoidal category $\mathscr{C}$, with a unity object $\mathbb{I}$, equipped with the data for each object $V\in \mathscr{C}$ of its {\em dual object} $V^{*}\in \mathscr{C}$ and of four morphisms
\begin{align*}
&\ev_{V}: V^{*} \otimes V \rightarrow \mathbb{I}, &\coev_{V}: \mathbb{I} \rightarrow V \otimes V^{*},\\
&\tev_{V}:V \otimes V^{*} \rightarrow \mathbb{I}, &\tcoev_{V}:\mathbb{I} \rightarrow V^{*} \otimes V
\end{align*} 
such that $(\ev_{V}, \coev_{V})$ and $(\tev_{V}, \tcoev_{V})$ are dualities which induce the same functor duality and the same natural isomorphism $(V \otimes W)^{*}\simeq W^{*} \otimes V^{*}$. Thus, the right and left dual coincide in $\mathscr{C}$: for every morphism $h: V \rightarrow W$, we have
\begin{align*}
h^{*}&=(\ev_{W}\otimes \Id_{V^{*}}) \circ (\Id_{W^{*}} \otimes h \otimes \Id_{V^{*}}) \circ (\Id_{W^{*}} \otimes \coev_{V})\\ 
&=(\Id_{V^{*}} \otimes \tev_{W}) \circ (\Id_{V^{*}} \otimes h \otimes \Id_{W^{*}}) \circ (\tcoev_{V}\otimes \Id_{W^{*}}): W^{*} \rightarrow V^{*}
\end{align*}
and for $V,W \in \mathscr{C}$, the isomorphisms $\gamma_{V,W}: W^{*} \otimes V^{*} \rightarrow (V \otimes W)^{*}$ are given by 
\begin{align*}
\gamma_{V,W}&=(\ev_{W}\otimes \Id_{(V \otimes W)^{*}}) \circ (\Id_{W^{*}} \otimes \ev_{V} \otimes \Id_{W \otimes(V \otimes W)^{*}}) \circ (\Id_{W^{*} \otimes V^{*}} \otimes \coev_{V \otimes W})\\ 
&=(\Id_{(V \otimes W)^{*}} \otimes \tev_{V}) \circ (\Id_{(V \otimes W)^{*} \otimes V} \otimes \tev_{W} \otimes \Id_{V^{*}}) \circ (\tcoev_{V \otimes W}\otimes \Id_{W^{*}\otimes V^{*}}).
\end{align*}  

The family of isomorphisms
\[ \Phi = \{\Phi_{V}=(\tev_{V} \otimes \Id_{V^{**}}) \circ (\Id_{V} \otimes \coev_{V^{*}}): V \rightarrow V^{**}\}_{V \in \mathscr{C}} \]
is a monoidal natural isomorphism called the pivotal structure.
%%%%%%%%%%%%%%%%%%%%%%%%%%%%%%%%%%%%%%%%%%%%%%%%%%%%%%%%%%%%%%%%%
\subsection{Ribbon category}
	A {\em braided} category  is a tensor category $\mathscr{C}$  
provided with a braiding $c:$
for all objects $V$ and $W$ of $\mathscr{C}$, we have an isomorphism $$c_{V,W}: V \otimes W \rightarrow W \otimes V.$$
These isomorphisms are natural and for all objects $U,V$ and $W$ of $\mathscr{C}$, we have
$$c_{U,V \otimes W}=(\Id_{V}\otimes c_{U,W}) \circ (c_{U,V}\otimes \Id_{W}) \ \text{and} \ c_{U \otimes V, W}=(c_{U,W}\otimes \Id_{V})\circ (\Id_{U}\otimes c_{V,W}).$$
If the category $\mathscr{C}$ is pivotal and braided, we can define a family of natural isomorphisms  
	$$\theta_{V}=\ptr_{R}(c_{V,V})=(\Id_{V}\otimes \tev_{V})\circ (c_{V,V} \otimes \Id_{V^{*}})\circ (\Id_{V} \otimes \coev_{V}): V \rightarrow V.$$
We say that $\theta$ is a {\em twist} if it is compatible with the dual in the following sense 
	$$\forall V \in \mathscr{C}, \theta_{V^{*}}=(\theta_{V})^{*}$$
which is equivalent to $\theta_{V}=\ptr_{L}(c_{V,V})=(\ev_{V} \otimes \Id_{V} )\circ (\Id_{V^{*}} \otimes c_{V,V})\circ (\tcoev_{V} \otimes \Id_{V}): V \rightarrow V$.

A {\em ribbon category} is a braided pivotal category in which the family of isomorphisms $\theta$ is a twist.

	%On va prouver que la cat\'egorie $\mathscr{C}^{H}$ des modules nilpotents de dimension de finie sur $\mathcal{U}_q\mathfrak{sl}(2|1)$ est une cat\'egorie enrubann\'ee.

% \input{Section2}

\section{Quantum superalgebra $\mathcal{U}_{\xi}\mathfrak{sl}(2|1)$}

\subsection{Hopf superalgebra $\mathcal{U}_{\xi}\mathfrak{sl}(2|1)$}
\begin{Def}
Let $\ell\geq 3$ be an odd integer and $\xi=\exp(\frac{2\pi i}{\ell})$.
The superalgebra $\mathcal{U}_\xi\mathfrak{sl}(2|1)$ is an associative superalgebra on $\mathbb{C}$ generated by the elements $k_1,k_2,k_1^{-1},k_2^{-1}, e_1,e_2,f_1,f_2$ 
and the relations
\begin{align*}
&k_1k_2=k_2k_1,\\
&k_{i}k_{i}^{-1}=1, \ i=1,2,\\
&k_ie_jk_i^{-1}=\xi^{a_{ij}}e_j, k_if_jk_i^{-1}=\xi^{-a_{ij}}f_j \ i,j=1,2,\\
&e_1f_1-f_1e_1=\frac{k_1-k_1^{-1}}{\xi-\xi^{-1}}, e_2f_2+f_2e_2=\frac{k_2-k_2^{-1}}{\xi-\xi^{-1}},\\
&[e_1, f_2]=0, [e_2, f_1]=0,\\
&e_2^{2}=f_2^{2}=0,\\
&e_1^{2}e_2-(\xi+\xi^{-1})e_1e_2e_1+e_2e_1^{2}=0,\\
&f_1^{2}f_2-(\xi+\xi^{-1})f_1f_2f_1+f_2f_1^{2}=0.
\end{align*}
The last two relations are called the Serre relations. The matrix $(a_{ij})$ is given by $a_{11}=2, a_{12}=a_{21}=-1, a_{22}=0$. The odd generators are $e_2, f_2$. 
\end{Def}
%%--------------------
 We define $\xi^{x}:=\exp(\frac{2\pi i x}{\ell})$, afterwards we will use the concepts
 $$\{x\}= \xi^{x}-\xi^{-x}, [x]= \dfrac{\xi^{x}-\xi^{-x}}{\xi-\xi^{-1}}.$$

Set $e_3=e_1e_2-\xi^{-1}e_2e_1, f_3=f_2f_1-\xi f_1f_2$. The Serre relations become
$$e_1e_3 = \xi e_3e_1, f_3f_1=\xi^{-1}f_1f_3.$$ 
Furthermore 
\begin{align*}
&e_2e_3=-\xi e_3e_2, \ f_3f_2=-\xi^{-1}f_2f_3, \\
&e_3f_3+f_3e_3=\frac{k_1k_2-k_1^{-1}k_2^{-1}}{q-q^{-1}},\\
&e_3^{2}=f_3^{2}=0.
\end{align*}
	According to \cite{SMkVNt91}, $\mathcal{U}_\xi\mathfrak{sl}(2|1)$ is a Hopf superalgebra with the coproduct, counit and antipode as below
\begin{align*}
&\Delta(e_i)=e_i \otimes 1 + k_i^{-1} \otimes e_i \ i=1,2,\\ 
&\Delta(f_i)=f_i \otimes k_i + 1 \otimes f_i \ i=1,2,\\
&\Delta(k_i)=k_i \otimes k_i \ i=1,2,\\
&S(e_i)=- k_ie_i, S(f_i)=-f_ik_i^{-1}, S(k_i)=k_i^{-1} \ i=1,2,\\
&\epsilon(k_i)=1, \epsilon(e_i)=\epsilon(f_i)=0 \ i=1,2.
\end{align*}
The center and representations of $\mathcal{U}_\xi\mathfrak{sl}(2|1)$ were studied by B. Abdesselam, D. Arnaudon and M. Bauer \cite{BaDaMb97}. We focus on the case of nilpotent representations of type $\mathfrak{B}$ with the condition $\ell$ odd.
\begin{rmq}
\begin{enumerate}
\item Because $(e_1 \otimes 1)(k_1^{-1} \otimes e_1)=\xi^2(k_1^{-1} \otimes e_1)(e_1 \otimes 1)$ and $\left(\ell \right)_{\xi}:=\frac{1-\xi^{\ell}}{1-\xi}=0$ then $\Delta(e_1^{\ell})=\sum_{m=0}^{\ell}\binom{\ell}{m}_{\xi}(e_1 \otimes 1)^{m}(k_1^{-1} \otimes e_1)^{\ell - m} = e_1^\ell \otimes 1 + k_1^{-\ell} \otimes e_1^\ell$. We have $\Delta^{op}(e_1^{\ell})=1 \otimes e_1^\ell + e_1^\ell \otimes k_1^{-\ell}$ at the same time. It is known that $e_1^\ell, f_1^\ell, k_1^\ell \in \mathcal{Z}$ where $\mathcal{Z}$ is the center of $\mathcal{U}_\xi\mathfrak{sl}(2|1)$, so $\Delta(e_1^{\ell}) \in \mathcal{Z} \otimes \mathcal{Z}$. It follows that there exists no element $R \in \mathcal{U}_\xi\mathfrak{sl}(2|1) \otimes \mathcal{U}_\xi\mathfrak{sl}(2|1)$ such that $\Delta^{op}(x)=R\Delta(x)R^{-1} \ \forall x\in \mathcal{U}_\xi\mathfrak{sl}(2|1)$, i.e. the superalgebra $\mathcal{U}_\xi\mathfrak{sl}(2|1)$ is not quasitriangular.
\item We think that the quotient superalgebra $\mathcal{U}_\xi\mathfrak{sl}(2|1)/(e_1^\ell,f_1^\ell)$ is not quasitriangular but $\mathcal{U}_\xi\mathfrak{sl}(2|1)/(e_1^\ell,f_1^\ell,k_1^\ell-1, k_2^\ell-1)$ should be, a proof of this might be found along the lines of \cite{lentner2014new} where the author uses a version of quantum group with divided power. This is not the quotient that interests us in this article.
\item The unrolled version $\mathcal{U}_{\xi}^{H}\mathfrak{sl}(2|1)$ seems to be quasitriangular only in a topological sense (see \cite{Ha_en_cours}). However, we will show in Theorem \ref{ptur} and Proposition \ref{twist} that some representations (the weight modules) form a ribbon category.
\end{enumerate}
\end{rmq}

The superalgebra $\mathcal{U}_\xi\mathfrak{sl}(2|1)/(e_1^\ell,f_1^\ell)$ has a Poincar\'e-Birkhoff-Witt basis $\{e_2^{\rho}e_3^{\sigma}e_1^{p}k_1^{s}k_2^{t}f_2^{\rho'}f_3^{\sigma'}f_1^{p'}, \rho, \sigma,\rho', \sigma' \in \{0,1\}, p, p' \in \{0,1, ...,\ell-1\}, s, t \in \mathbb{Z}\}$, its Borel part is a superalgebra $\mathcal{U}_\xi(\mathfrak{n}_{+})$ which has a vector space basis $\{e_2^{\rho}e_3^{\sigma}e_1^{p}\ \rho, \sigma \in \{0,1\}, p \in \{0,1, ...,\ell-1\}\}$. It is well known that $\mathcal{U}_\xi(\mathfrak{n}_{+})$ is a Nichols algebra of diagonal type associated with the generalized Dynkin diagram $\epsh{Dynkin}{1ex}
      \put(-7,2){\ms{-1}} \put(-39,4){\ms{\xi^2}} \put(-22,3){\ms{\xi^{-2}}}\ \ $(see \cite{Hec2008}). We now explain this point of view. We consider the group algebra $B=\mathbb{C}G$ in which $G$ is an abelian group generated by $k_1, k_2$, a vector space $V$ on $\mathbb{C}$ generated by $e_1, e_2$. Here $B$ is a Hopf algebra and $(V, \cdot, \delta)$ is a Yetter-Drinfeld module on $B$ \cite{Hec2008}, where the action $\cdot: B \otimes V \rightarrow V$ of $B$ on $V$ is determined by 
\begin{align*}
&k_1 \cdot e_1 = \xi^{2}e_1,\ k_1 \cdot e_2 = \xi^{-1}e_2,\\  
&k_2 \cdot e_1 = \xi^{-1}e_1, \ k_2 \cdot e_2 = -e_2, 
\end{align*}
the matrix determining the bicharacter is $(q_{ij})_{2\times 2}, q_{ij}=(-1)^{\vert i\vert \vert j\vert}\xi^{a_{ij}}$ where $\vert 1\vert=0$, $\vert 2\vert=1$ and the coaction $\delta: V \rightarrow B \otimes V$ of $B$ on $V$ is given by
	$$\delta(e_i)=k_i \otimes e_i\ i=1,2.$$
It is clear that $\delta(b \cdot v)=b_{(1)}v_{(-1)}S(b_{(3)}) \otimes b_{(2)}\cdot v_{(0)}$ for all $b \in B, v \in V$. Here we use the Sweedler notation and write $(\Delta \otimes \Id)\Delta(b)=b_{(1)} \otimes b_{(2)} \otimes b_{(3)}, \ \delta(v)=v_{(-1)}\otimes v_{(0)}$ for $b \in B, v \in V$.\\
Using Hopf algebra $B$ and Yetter-Drinfeld module $V$ we can determine the Nichols algebra $\mathcal{B}(V)=T(V)/\mathcal{J}(V)$ where $T(V)=\bigoplus_{n=0}^{\infty}V^{\otimes n}$ is the tensor algebra of $V$ with the braided copoduct $\widetilde{\Delta}(v)=1\otimes v + v \otimes 1$ and counit $\epsilon(v)=0$ for $v \in V$, $\mathcal{J}(V)$ is the maximal coideal of $T(V)$. We now check that $e_2^{2}$ and the Serre relation $w=e_1e_3-\xi e_3e_1$ are in $\mathcal{J}(V)$. We have $\widetilde{\Delta}(e_2^{2})=\widetilde{\Delta}(e_2)\widetilde{\Delta}(e_2)=(1 \otimes e_2 + e_2 \otimes 1)(1 \otimes e_2 + e_2 \otimes 1)=1 \otimes e_2^{2}+(k_2\cdot e_2) \otimes e_2+e_2\otimes e_2+e_2^{2} \otimes 1=1 \otimes e_2^{2} + e_2^{2} \otimes 1$, so $e_2^{2} \in \mathcal{J}(V)$. 

We calculate
\begin{align*}
&\widetilde{\Delta}(e_3)=\widetilde{\Delta}(e_1)\widetilde{\Delta}(e_2)-\xi^{-1}\widetilde{\Delta}(e_2)\widetilde{\Delta}(e_1)\\
&=(1 \otimes e_1 + e_1 \otimes 1)(1 \otimes e_2 + e_2 \otimes 1)-\xi^{-1}(1 \otimes e_2 + e_2 \otimes 1)(1 \otimes e_1 + e_1 \otimes 1)\\
&=1\otimes e_1e_2 + (k_1\cdot e_2) \otimes e_1+ e_1 \otimes e_2 + e_1e_2 \otimes 1\\
&\qquad \qquad -\xi^{-1}\left(1\otimes e_2e_1 + (k_2\cdot e_1) \otimes e_2 + e_2 \otimes e_1 + e_2e_1 \otimes 1 \right)\\
&=1 \otimes e_3 + e_3 \otimes 1 + (1-\xi^{-2})e_1\otimes e_2.
\end{align*}
%&=1\otimes e_1e_2 + \xi^{-1} e_2 \otimes e_1+ e_1 \otimes e_2 + e_1e_2 \otimes 1\\
%&\qquad \qquad -\xi^{-1}\left(1\otimes e_2e_1 + \xi^{-1} e_1 \otimes e_2 + e_2 \otimes e_1 + e_2e_1 \otimes 1 \right)\\ 
And a similar calculation gives us
\begin{align*}
\widetilde{\Delta}(e_1)\widetilde{\Delta}(e_3)&=1 \otimes e_1e_3 + \xi e_3\otimes e_1+(1-\xi^{-2})\xi^{2}e_1\otimes e_1e_2 \\
&\qquad + e_1\otimes e_3+e_1e_3 \otimes 1  +(1-\xi^{-2})e_1^{2}\otimes e_2,
\end{align*}
and
%&=(1 \otimes e_1 + e_1 \otimes 1)(1 \otimes e_3 + e_3 \otimes 1+(1-\xi^{-2})e_1\otimes e_2)\\
%&=1 \otimes e_1e_3 + (k_1\cdot e_3)\otimes e_1+(1-\xi^{-2})(k_1\cdot e_1)\otimes e_1e_2 + e_1\otimes e_3\\
%&\qquad \qquad +e_1e_3 \otimes 1  +(1-\xi^{-2})e_1^{2}\otimes e_2\\
\begin{align*}
\widetilde{\Delta}(e_3)\widetilde{\Delta}(e_1)&=1 \otimes e_3e_1 + \xi e_1\otimes e_3 + e_3\otimes e_1+e_3e_1 \otimes 1 \\
&\qquad +(1-\xi^{-2})e_1\otimes e_2e_1 +(1-\xi^{-2})\xi^{-1}e_1^{2}\otimes e_2.
\end{align*}
%&=(1 \otimes e_3 + e_3 \otimes 1+(1-\xi^{-2})e_1\otimes e_2)(1 \otimes e_1 + e_1 \otimes 1)\\
%&=1 \otimes e_3e_1 + (k_1k_2\cdot e_1)\otimes e_3 + e_3\otimes e_1+e_3e_1 \otimes 1\\
%&\qquad\qquad +(1-\xi^{-2})e_1(k_2\cdot 1)\otimes e_2e_1 +(1-\xi^{-2})e_1(k_2\cdot e_1)\otimes e_2\\
Thus we have 
\begin{align*}
\widetilde{\Delta}(w)&=\widetilde{\Delta}(e_1)\widetilde{\Delta}(e_3)-\xi \widetilde{\Delta}(e_3)\widetilde{\Delta}(e_1)\\
&=1 \otimes w+w\otimes 1+ (\xi^{2}-1)e_1\otimes e_1e_2+e_1 \otimes e_3\\
&\qquad -\xi^{2}e_1 \otimes e_3-(\xi-\xi^{-1})e_1\otimes e_2e_1\\
&=1 \otimes w+w\otimes 1.
\end{align*}
%&=1 \otimes w+w\otimes 1+ (\xi^{2}-1)e_1\otimes e_1e_2+e_1\otimes e_1e_2-\xi^{-1}e_1\otimes e_2e_1\\
%&\qquad \qquad -\xi^{2}e_1\otimes e_1e_2 +\xi e_1\otimes e_2e_1-(\xi-\xi^{-1})e_1\otimes e_2e_1\\
This implies that $w\in \mathcal{J}(V)$. The bosonization of $\mathcal{B}(V)$ is then isomorphic to a Hopf subalgebra of the bosonization of the Hopf superalgebra $\mathcal{U}_{\xi}\mathfrak{sl}(2|1)$.  

%%%%%%%%%%%%%%%%%%%%%%%%%%%%%%%%%%%%%%%%%%%%%%%%%%%%%%%%%%%%%%%%%%%%%%%%%%
\subsection{Pivotal Hopf superalgebra $\mathcal{U}_\xi\mathfrak{sl}(2|1)$} \label{2.2}
\begin{Pro}
Given $\phi_0=k_1^{-\ell}k_2^{-2}$, so $\forall u \in \mathcal{U}_\xi\mathfrak{sl}(2|1), S^{2}(u)=\phi_0 u \phi_0^{-1}$.
\end{Pro}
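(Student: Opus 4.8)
The plan is to use that $S$ is a super\nobreakdash-anti\nobreakdash-automorphism of $\mathcal{U}_\xi\mathfrak{sl}(2|1)$, so that $S^{2}$ is an \emph{even} algebra endomorphism: from $S(ab)=(-1)^{|a||b|}S(b)S(a)$ together with $|S(x)|=|x|$ one checks $S^{2}(ab)=S^{2}(a)S^{2}(b)$. On the other hand the inner map $\operatorname{Ad}_{\phi_0}\colon u\mapsto\phi_0 u\phi_0^{-1}$ is an even algebra automorphism because $\phi_0=k_1^{-\ell}k_2^{-2}$ is an invertible even element. Since $\mathcal{U}_\xi\mathfrak{sl}(2|1)$ is generated by $k_1^{\pm1},k_2^{\pm1},e_1,e_2,f_1,f_2$, it suffices to verify $S^{2}(u)=\phi_0 u\phi_0^{-1}$ when $u$ runs over these generators.

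First I would compute $S^{2}$ on the generators directly from the formulas for $S$. Clearly $S^{2}(k_i)=k_i$. Using that $k_i$ is even, $S(k_ie_i)=S(e_i)S(k_i)$ gives $S^{2}(e_i)=S(-k_ie_i)=-S(e_i)S(k_i)=k_ie_ik_i^{-1}=\xi^{a_{ii}}e_i$, and similarly $S^{2}(f_i)=k_if_ik_i^{-1}=\xi^{-a_{ii}}f_i$. In particular $S^{2}(e_1)=\xi^{2}e_1$, $S^{2}(e_2)=e_2$, $S^{2}(f_1)=\xi^{-2}f_1$, $S^{2}(f_2)=f_2$.

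Next I would compute $\operatorname{Ad}_{\phi_0}$ on the generators using only the relations $k_ie_jk_i^{-1}=\xi^{a_{ij}}e_j$, $k_if_jk_i^{-1}=\xi^{-a_{ij}}f_j$ and $k_1k_2=k_2k_1$. Since $\xi^{\ell}=1$, the element $k_1^{\ell}$ is central ($k_1^{\ell}e_jk_1^{-\ell}=\xi^{\ell a_{1j}}e_j=e_j$, and likewise for $f_j$), hence $\operatorname{Ad}_{\phi_0}=\operatorname{Ad}_{k_2^{-2}}$. Therefore $\phi_0 k_i\phi_0^{-1}=k_i$, $\phi_0 e_j\phi_0^{-1}=\xi^{-2a_{2j}}e_j$ and $\phi_0 f_j\phi_0^{-1}=\xi^{2a_{2j}}f_j$, that is $\phi_0 e_1\phi_0^{-1}=\xi^{2}e_1$, $\phi_0 e_2\phi_0^{-1}=e_2$, $\phi_0 f_1\phi_0^{-1}=\xi^{-2}f_1$, $\phi_0 f_2\phi_0^{-1}=f_2$. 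Comparing with the previous paragraph (and using $a_{11}=2$, $a_{12}=a_{21}=-1$, $a_{22}=0$), the two automorphisms agree on every generator, which proves the claim.

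The computations are routine; the only points demanding care are the super\nobreakdash-sign bookkeeping that makes $S^{2}$ an honest even algebra homomorphism rather than an anti\nobreakdash-homomorphism, and the observation that neither the Serre relations nor the central elements $e_1^{\ell},f_1^{\ell}$ are needed: the statement reduces entirely to the Cartan\nobreakdash-type commutation relations together with $\xi^{\ell}=1$, which is precisely why the exponent $-\ell$ on $k_1$ in $\phi_0$ is harmless (a conjugation\nobreakdash-trivial central factor) while the $k_2^{-2}$ supplies the required twist.
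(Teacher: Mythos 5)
Your proof is correct and takes essentially the same approach as the paper, which simply states that the identity can be verified on the generators $k_i, e_i, f_i$. You have usefully filled in the routine details (super-sign bookkeeping making $S^2$ an even algebra homomorphism, the computation of both sides on generators, and the observation that $k_1^\ell$ acts trivially by conjugation since $\xi^\ell=1$), but there is no difference in method.
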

\begin{proof}
This can be verified for generator elements $k_i, e_i, f_i$,  $i=1,2$.
\end{proof}
It follows that the Hopf superalgebra $\mathcal{U}_\xi\mathfrak{sl}(2|1)$ provided with the pivotal element $\phi_0=k_1^{-\ell}k_2^{-2}$ is pivotal superalgebra (see \cite{BePM12}). 

	Given $\mathscr{C}$ the even category of representations of $\mathcal{U}_\xi\mathfrak{sl}(2|1)$ in $\mathbb{C}$-vector spaces of finite dimension, the category $\mathscr{C}$ is pivotal. If $V$ is an object of $\mathscr{C}$, its dual is a $\mathbb{C}$-vector space $V^{*}=\Hom_{\mathbb{C}}(V, \mathbb{C})$ provided with the action of $u$ given by $(u,\varphi)\mapsto (-1)^{\deg u\deg\varphi}\varphi \circ \rho_{V}(S(u))$ where $\rho_{V}:\mathcal{U}_\xi\mathfrak{sl}(2|1) \rightarrow \End_{\mathbb{C}}(V)$ is the representation of $\mathcal{U}_\xi\mathfrak{sl}(2|1)$.

	The unity element of category $\mathscr{C}$ is the module $\mathbb{C}$ provided with the representation $\epsilon: \mathcal{U}_\xi\mathfrak{sl}(2|1) \rightarrow \mathbb{C}\cong \End_{\mathbb{C}}(\mathbb{C})$. 
If one has a basis $(e_{i})_{i}$ of $V$ with dual basis $(e_{i}^{*})_{i}$, it can be described by dual morphisms
\begin{align*}
\ev_{V}: &e_{i}^{*} \otimes e_{j} \mapsto e_{i}^{*}(e_{j})=\delta_{i}^{j}, & \coev_{V}: &1 \mapsto \sum_{i}{e_{i} \otimes e_{i}^{*}}, \\
\tev_{V}: &e_{j} \otimes  e_{i}^{*} \mapsto (-1)^{\deg e_{j}}e_{i}^{*}(\phi_{0}.e_{j}),&
\tcoev_{V}: &1 \mapsto \sum_{i}{e_{i}^{*} \otimes (-1)^{\deg e_{i}}(\phi_{0}^{-1}.e_{i})}.
\end{align*}
%%%%%%%%%%%%%%%%%%%%%%%%%%%%%%%%%%%%%%%%%%%%%%%%%%%%%%%%%%%%%%%%%%%%%%%%%%%%%%%%%%%%%%%%%%%%%%%%%%%%%%%%%%%%%%%%%%%%%%%%%%%%%%%%%%%%
\subsection{Category of nilpotent weight modules}
\subsubsection{Typical module}
	We consider the even category $\mathscr{C}$ of the nilpotent finite dimensional representations over $\mathcal{U}_{\xi}\mathfrak{sl}(2|1)$, its objects are finite dimensional representations of $\mathcal{U}_\xi\mathfrak{sl}(2|1)$ on which $e_{1}^{\ell}=f_{1}^{\ell}=0$ and $k_1, k_2$ are diagonalizable. If $V, V^{'} \in \mathscr{C}$, $\Hom_{\mathscr{C}}(V, V^{'})$ is formed by the even morphisms between these two modules (see \cite{NgBp08}). Each nilpotent simple module (called "of type $\mathfrak{B}$" in section 5.2 \cite{BaDaMb97}) is determined by the highest weight $\mu = (\mu_{1}, \mu_{2}) \in \mathbb{C}^{2}$ and is denoted $ V_{\mu_{1}, \mu_{2}}$ or $V_{\mu}$. Its highest weight vector $w_{0, 0, 0}$ satisfies
\begin{align*} 
e_1w_{0,0,0}&=0, &e_2w_{0,0,0}&=0, \\
k_1w_{0,0,0}&=\lambda_1 w_{0,0,0}, &k_2w_{0,0,0}&=\lambda_2 w_{0,0,0}
\end{align*}
where $\lambda_i=\xi^{\mu_i}$ with $i=1,2.$
	
	For $\mu=(\mu_{1}, \mu_{2}) \in \mathbb{C}^{2}$ we say that $\mathcal{U}_\xi\mathfrak{sl}(2|1)$-module $V_{\mu}$ is typical if it is a simple module of dimension $4\ell$. Other simple modules are said to be atypical.
	
	The basis of a typical module is formed by vectors $w_{\rho, \sigma, p}=f_2^{\rho}f_3^{\sigma}f_1^{p}w_{0,0,0}$ where $\rho, \sigma \in \{0, 1\}, 0\leq p < \ell$. The odd elements are $w_{0,1, p}$ and $w_{1,0, p}$, others are even. The representation of typical $\mathcal{U}_\xi\mathfrak{sl}(2|1)$-module $V_{\mu_1, \mu_2}$ is determined by
\begin{align*}
&k_{1}w_{\rho, \sigma, p}=\lambda_{1}\xi^{\rho-\sigma-2p}w_{\rho, \sigma, p}, \\
&k_{2}w_{\rho, \sigma, p}=\lambda_{2}\xi^{\sigma+p}w_{\rho, \sigma, p},\\
&f_{1}w_{\rho, \sigma, p}=\xi^{\sigma-p}w_{\rho, \sigma, p+1}-\rho(1-\sigma)\xi^{-\sigma}w_{\rho-1, \sigma +1, p},\\
&f_{2}w_{\rho, \sigma, p}=(1-\rho)w_{\rho+1, \sigma, p}, \\
&e_{1}w_{\rho, \sigma, p}=-\sigma(1-\rho)\lambda_{1}\xi^{-2p+1}w_{\rho+1, \sigma-1, p}+[p][\mu_{1}-p+1]w_{\rho, \sigma, p-1},\\
&e_{2}w_{\rho, \sigma, p}=\rho[\mu_{2}+p+\sigma]w_{\rho -1, \sigma, p}+\sigma(-1)^{\rho}\lambda_{2}^{-1}\xi^{-p}w_{\rho, \sigma -1, p+1}.
\end{align*}
where $\rho, \sigma \in \{0,1 \}$ and $p \in \{0,1,...,\ell-1\}$. \vspace{13pt}

	We also have $V_{\mu}\simeq V_{\mu+\vartheta}\Leftrightarrow \vartheta \in (\ell \mathbb{Z})^{2}$.

\begin{rmq}	 
	The module $V_{\mu}$ is typical if $[\mu_{1}-p+1] \ne 0 \ \forall p \in \{1,...,\ell-1\} \ (\mu_{1} \neq p-1+\frac{\ell}{2}\mathbb{Z}\ \forall p \in \{1,...,\ell-1\})$ and $[\mu_{2}][\mu_{1}+\mu_{2}+1] \ne 0 \ (\mu_2 \neq \frac{\ell}{2}\mathbb{Z}, \mu_{1}+\mu_{2} \neq -1 + \frac{\ell}{2}\mathbb{Z})$ {\em (see \cite{BaDaMb97})}.
\end{rmq}
	We define a $\mathbb{C}$-superalgebra $\mathcal{U}_{\xi}^{H}\mathfrak{sl}(2|1)$ (note $\mathcal{U}^{H}$) as $\mathcal{U}_{\xi}^{H}\mathfrak{sl}(2|1)=\left< \mathcal{U}_\xi\mathfrak{sl}(2|1), h_{i}, i=1,2\right>$ with the relations in $\mathcal{U}_\xi\mathfrak{sl}(2|1)$ and $[h_{i},e_{j}]=a_{ij}e_{j}, [h_{i},f_{j}]=-a_{ij}f_{j}$, $[h_{i},h_{j}]=0, [h_{i},k_{j}]=0 \ i, j = 1, 2$.
	
	The superalgebra $\mathcal{U}^{H}$ is a Hopf superalgebra where $\Delta, S$ and $\epsilon$ are determined as in $\mathcal{U}_\xi\mathfrak{sl}(2|1)$ and by 
	$$\Delta(h_i)=h_i \otimes 1 + 1 \otimes h_i, S(h_i)=-h_i, \epsilon(h_i)=0 \ i=1,2.$$ 
	We consider the even category $\mathscr{C}^{H}$ of nilpotent finite dimensional $\mathcal{U}^{H}$-modules (precise that $e_{1}^{\ell}=f_{1}^{\ell}=0$) for which $\xi^{h_i}=k_i$ as diagonalizable operators. The category $\mathscr{C}^{H}$ is pivotal similar to $\mathscr{C}$ (see Section \ref{2.2}).
	
	We define the actions of $h_i, i=1, 2$ on the basis of $V_{\mu_{1},\mu_{2}}$ by $$h_1w_{\rho, \sigma, p}=(\mu_1+\rho - \sigma - 2p)w_{\rho, \sigma, p}, h_2w_{\rho, \sigma, p}=(\mu_2+\sigma + p)w_{\rho, \sigma, p}.$$
	Thus $V_{\mu_{1},\mu_{2}}$ is a weight module of $\mathscr{C}^{H}.$ A module in $\mathscr{C}^{H}$ is said to be typical if, seen as a $\mathcal{U}_\xi\mathfrak{sl}(2|1)$-module, it is typical. For each module $V$ we note $\overline{V}$ the same module with the opposite parity.
	We set $\mathit{G}=\mathbb{C} / \mathbb{Z} \times \mathbb{C}/ \mathbb{Z}$ and for each $\overline{\mu} \in \mathit{G}$ we define $\mathscr{C}_{\overline{\mu}}^{H}$ as the sub-category of weight modules which have their weights in the coset $\overline{\mu} \ (\text{modulo} \ \mathbb{Z}\times \mathbb{Z})$. So $\{ \mathscr{C}_{\overline{\mu}}^{H} \}_{\overline{\mu} \in \mathit{G}}$ is a $\mathit{G}$-graduation (where $\mathit{G}$ is an additive group): let $V \in \mathscr{C}_{\overline{\mu}}^{H}, V^{'}\in \mathscr{C}_{\overline{\mu}^{'}}^{H}$, then the weights of $V\otimes V^{'}$ are congruent to $\overline{\mu} + \overline{\mu}^{'}\ (\text{modulo} \ \mathbb{Z}\times \mathbb{Z})$. Furthermore, if $\overline{\mu} \ne \overline{\mu}^{'}$ then $\Hom_{\mathscr{C}^{H}}(V,V^{'})=0$ because a morphism preserves weights. 
	
	We also define $$\mathit{G}_{s}=\{\overline{g} \in \mathit{G} \ \textit{such that} \  \exists V \in \mathscr{C}_{\overline{g}}^{H} \ \textit{simple of} \ \mathscr{C}^{H} \textit{and atypical}\}.$$
	It follows from \cite{BaDaMb97} that $$\mathit{G}_{s}=\left\{\overline{0}, \overline{\frac{1}{2}}\right\}\times \mathbb{C}/ \mathbb{Z} \cup \mathbb{C}/ \mathbb{Z} \times \left \{\overline{0}, \overline{\frac{1}{2}}\right\} \cup \left\{(\overline{\mu_1}, \overline{\mu_2}): \overline{\mu_1} + \overline{\mu_2}\in\left\{\overline{0}, \overline{\frac{1}{2}}\right\}\right\}.$$
%%%%%%%%%%%%%%%%%%%%%%%%%%%%%%%%%%%%%%%%%%%%%%%%%%%%%%%%%%%%%%%%%%%%%%%%%%%%%%%%%%%%%%%%%%%%%%%%%%%%%%%%%%%%%%%%%%%%%%%%%%%%%
\subsubsection{Character of representations of $\mathcal{U}_\xi^{H}\mathfrak{sl}(2|1)$}
\begin{Def}
	The character of a weight module $V$ is $$\chi_{V}=\sum_{\mu}\dim(E_{\mu}(V))X_{1}^{\mu_{1}}X_{2}^{\mu_{2}}$$
where $E_{\mu}(V)$ is the proper subspace of the proper value $\mu=(\mu_{1}, \mu_{2})$ of $(h_1, h_2)$.
\end{Def}
Note that we do not use the concept of a super-character defined as above by replacing the dimension by the super-dimension. 

	A finite dimensional representation of $\mathcal{U}_\xi\mathfrak{gl}(2)$, subalgebra generated by $e_{1}, f_{1}, k_{i}$ is defined by $V=Vect(v_{0}, ..., v_{\ell-1})$ \cite{BaDaMb97} 
\begin{align*}
&k_1v_p=\lambda_1 \xi^{-2p}v_p \ \text{with} \ p \in \{0,1, ..., \ell-1\},\\
&f_1v_p = v_{p+1} \ \text{with} \ p \in \{0,1, ..., \ell-2\} \ \text{and} \ f_1v_{\ell-1}=0,\\
&e_1v_p=[p][\mu_1-p+1]v_{p-1},  \ \xi^{\mu_1} \equiv \lambda_1,\\
&k_2v_p=\lambda_2 \xi^{p}v_p \ \text{with} \ p \in \{0,1, ..., \ell-1\}.
\end{align*}
	It extends to the generators $h_{1}, h_{2}$ by
\begin{align*}
&h_1v_p=(\mu_1 -2p)v_p \ \text{with} \ p \in \{0,1, ..., \ell-1\}\\
&h_2v_p=(\mu_2 + p)v_p \ \text{with} \ p \in \{0,1, ..., \ell-1\}
\end{align*}
so that $\xi^{h_i}=k_i, i=1,2$ on $V$.
	We have the character of representation of $\mathcal{U}_\xi\mathfrak{gl}(2)$
\begin{equation*}
\chi_{V_{\mu_1, \mu_2}^{\mathfrak{gl}(2)}}=X_1^{\mu_1}X_2^{\mu_2}\frac{1-x^{\ell}}{1-x} \ \text{where} \ x=X_1^{-2}X_2.
\label{<>}
\end{equation*}

	In the case of a typical representation, the nilpotent representation $V_{\mu_{1},\mu_{2}}$ of $\mathcal{U}_\xi\mathfrak{sl}(2|1)$ with highest weight $(\mu_{1}, \mu_{2})$ is determined by
\begin{align*}
&k_1w_{\rho, \sigma, p}=\lambda_1\xi^{\rho - \sigma - 2p}w_{\rho, \sigma, p},\\ 
&k_2w_{\rho, \sigma, p}=\lambda_2\xi^{\sigma + p}w_{\rho, \sigma, p}
\end{align*}
with $h_1w_{\rho, \sigma, p}=(\mu_1+\rho - \sigma - 2p)w_{\rho, \sigma, p}$ and $h_2w_{\rho, \sigma, p}=(\mu_2+ \sigma + p)w_{\rho, \sigma, p}$.
So the nilpotent representation $V_{\mu_{1},\mu_{2}}$ has the following character
\begin{multline} \label{dac so sl21}
\chi_{V_{\mu_1, \mu_2}^{\mathfrak{sl}(2|1)}}=\chi_{V_{\mu_1, \mu_2, \rho = \sigma = 0}^{\mathfrak{gl}(2)}}+\chi_{V_{\mu_1, \mu_2, \rho = 1,\sigma = 0}^{\mathfrak{gl}(2)}}+\chi_{V_{\mu_1, \mu_2, \rho = 0,\sigma = 1}^{\mathfrak{gl}(2)}}+\chi_{V_{\mu_1, \mu_2, \rho = \sigma = 1}^{\mathfrak{gl}(2)}}\\
= X_1^{\mu_1}X_2^{\mu_2}\frac{1-x^{\ell}}{1-x}(1+X_1)(1+X_{1}x).
\end{multline}
%%%%%%%%%%%%%%%%%%%%%%%%%%%%%%%%%%%%%%%%%%%%%%%%%%%%%%%%%%%%%%%%%%%%%%%%%%%%%%%%%%%%%%%%%%%%%%%%%%%%%%%%%%%%%%%%%%%%%%%
\subsubsection{Braided category $\mathscr{C}^{H}$}
	The $\mathbb{C}$-superalgebra $\mathcal{U}_\xi\mathfrak{sl}(2|1)$	can be seen as the specialisation at $q=\xi$ of the $\mathbb{C}(q)$-subsuperalgebra $\mathcal{U}_{q}\mathfrak{sl}(2|1)$ of the $h$-adic quantized enveloping superalgebra of $\mathfrak{sl}(2|1)$ where $q=e^{h} \in \mathbb{C}[[h]]$.	In articles \cite{SMkVNt91, HYa94} the authors showed that: 
$\mathcal{R}^{q}=\check{\mathcal{R}}^{q}\mathcal{K}_{q}$ where $$\check{\mathcal{R}}^{q}=\sum_{i=0}^{\infty}\frac{\{1\}^{i}e_1^{i} \otimes f_1^{i}}{(i)_{q}!} \sum_{j=0}^{1}\frac{(-\{1\})^{j}e_3^{j} \otimes f_3^{j}}{(j)_{q}!} \sum_{k=0}^{1}
\frac{(-\{1\})^{k}e_2^{k} \otimes f_2^{k}}{(k)_{q}!},$$ $(0)_{q}!=1, (n)_{q}!:=(1)_{q}(2)_{q}\ldots(n)_{q}, (k)_{q}=\frac{1-q^k}{1-q}$ and $\mathcal{K}_{q}=q^{-h_1 \otimes h_2 -h_2 \otimes h_1 - 2h_2 \otimes h_2}$ is a universal $R$-matrix element of superalgebra $\mathcal{U}_{q}\mathfrak{sl}(2|1)$. That is, we have the following relations
$$(\Delta \otimes \Id)(\mathcal{R}^{q})=\mathcal{R}_{13}^{q}\mathcal{R}_{23}^{q}, \
(\Id \otimes \Delta)(\mathcal{R}^{q})=\mathcal{R}_{13}^{q}\mathcal{R}_{12}^{q}, \
\Delta^{op}(x)\mathcal{R}^{q}=\mathcal{R}^{q}\Delta(x)$$ 
for all $x \in \mathcal{U}_q\mathfrak{sl}(2|1)$. The superalgebra $\mathcal{U}_q\mathfrak{sl}(2|1)$ has a Poincar\'e-Birkhoff-Witt basis $\{e_{1}^{p^{'}}e_{3}^{\sigma^{'}}e_{2}^{\rho^{'}}h_{1}^{s_{1}}h_{2}^{s_{2}}f_{2}^{\rho}f_{3}^{\sigma}f_{1}^{p}, p, p^{'} \in \mathbb{N}, \rho, \sigma, \rho^{'},\sigma^{'} \in \{ 0,1 \}, \\s_{1}, s_{2} \in \mathbb{N}  \}$.
%%%---------------
Using this basis we can write $\mathcal{U}_q\mathfrak{sl}(2|1)$ as a direct sum $\mathcal{U}_q\mathfrak{sl}(2|1)=\mathcal{U}^{<} \oplus I$ where $\mathcal{U}^{<}$ is a $\mathbb{C}(q)$-module generated by the elements $e_{1}^{p^{'}}e_{3}^{\sigma^{'}}e_{2}^{\rho^{'}}h_{1}^{s_{1}}h_{2}^{s_{2}}f_{3}^{\sigma}f_{2}^{\rho}f_{1}^{p}$ for $0\leq p, p^{'} < \ell; \rho, \sigma, \rho^{'},\sigma^{'} \in \{ 0,1 \}, s_{1}, s_{2} \in \mathbb{N} $ and $I$ is generated by the other monomials.
Set $p: \mathcal{U}_q\mathfrak{sl}(2|1) \rightarrow \mathcal{U}^{<}$ the projection with kernel $I$. We define $$\mathcal{R}^{<}= p\otimes p(\mathcal{R}^{q})=p \otimes \Id(\mathcal{R}^{q})=\Id \otimes p(\mathcal{R}^{q}).$$
The proposition below shows that the "truncated R-matrix" $\mathcal{R}^{<}$ satisfies the properties of an R-matrix "modulo truncation".
%%------
\begin{Pro} \label{mdR}
$\mathcal{R}^{<}$ satisfies:
\begin{enumerate}
\item $(p \otimes p \otimes p)(\Delta \otimes \Id(\mathcal{R}^{<}))=(p \otimes p \otimes p)\mathcal{R}_{13}^{<}\mathcal{R}_{23}^{<}$,
\item $(p \otimes p \otimes p)(\Id \otimes \Delta(\mathcal{R}^{<}))=(p \otimes p \otimes p)\mathcal{R}_{13}^{<}\mathcal{R}_{12}^{<}$,
\item $(p \otimes p)(\mathcal{R}^{<} \Delta^{op}(x))=(p \otimes p)(\Delta(x)\mathcal{R}^{<})$ for all $x \in \mathcal{U}_q\mathfrak{sl}(2|1)$.
\end{enumerate}
\end{Pro}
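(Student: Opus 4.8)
The plan is to derive (1)--(3) by applying the projections $p^{\otimes n}$ to the exact $R$-matrix identities already recalled, namely $(\Delta\otimes\Id)(\mathcal{R}^{q})=\mathcal{R}_{13}^{q}\mathcal{R}_{23}^{q}$, $(\Id\otimes\Delta)(\mathcal{R}^{q})=\mathcal{R}_{13}^{q}\mathcal{R}_{12}^{q}$ and $\Delta^{op}(x)\mathcal{R}^{q}=\mathcal{R}^{q}\Delta(x)$. Write $\mathcal{R}^{q}=\mathcal{R}^{<}+\mathcal{R}'$, where $\mathcal{R}'=\check{\mathcal{R}}^{q,\geq}\mathcal{K}_{q}$ collects the summands of $\check{\mathcal{R}}^{q}$ in which $e_{1}$ (equivalently $f_{1}$) occurs with exponent $\geq\ell$. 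By construction the first tensor leg of every summand of $\mathcal{R}'$ lies in the subspace $I_{e}\subseteq I$ spanned by the PBW monomials with $e_{1}$-exponent $\geq\ell$; this already accounts for the equalities $\mathcal{R}^{<}=p\otimes p(\mathcal{R}^{q})=p\otimes\Id(\mathcal{R}^{q})$ of the definition (the third expression $\Id\otimes p(\mathcal{R}^{q})$ differs from these by a summand in $\mathcal{U}_{q}\mathfrak{sl}(2|1)\otimes I$, which is killed by the outer $p$ in each displayed identity). For (1) I would apply $\Delta\otimes\Id$ and then $p\otimes p\otimes p$ to $\mathcal{R}^{q}=\mathcal{R}^{<}+\mathcal{R}'$, expand $\mathcal{R}_{13}^{q}\mathcal{R}_{23}^{q}=\mathcal{R}_{13}^{<}\mathcal{R}_{23}^{<}+(\text{three cross terms involving }\mathcal{R}')$, and thereby reduce (1) to two statements: (i) $(\Delta\otimes\Id)(\mathcal{R}')\in\ker(p^{\otimes 3})$; (ii) each cross term lies in $\ker(p^{\otimes 3})$. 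The treatment of (2) and (3) is the same, using the corresponding exact identity and, for (3), the projection $p\otimes p$.

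Statement (ii) is the elementary half. In each cross term at least one tensor slot receives a product whose left-most factor is the first ($e$-)leg of an $\mathcal{R}'$-summand, hence an element of $I_{e}$. It therefore suffices to observe that $I_{e}$ is a right ideal of $\mathcal{U}_{q}\mathfrak{sl}(2|1)$: right multiplication by a generator sends a PBW monomial with $e_{1}$-exponent $\geq\ell$ to a combination of such monomials (a short check from the commutation relations, the only case that moves the $e_{1}$-exponent being multiplication by $e_{1}$, which produces exponents $p'$ and $p'+1$). Since $I_{e}\subseteq I=\ker p$, that slot is annihilated by its copy of $p$, so $p^{\otimes 3}$ kills the cross term.

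Statement (i) is where the hypothesis that $\xi$ is a root of unity of odd order $\ell$ enters. From $(e_{1}\otimes 1)(k_{1}^{-1}\otimes e_{1})=\xi^{2}(k_{1}^{-1}\otimes e_{1})(e_{1}\otimes 1)$ and the $\xi^{2}$-binomial theorem one gets $\Delta(e_{1}^{i})=\sum_{m}\binom{i}{m}_{\xi^{2}}e_{1}^{m}k_{1}^{-(i-m)}\otimes e_{1}^{i-m}$, and for $i\geq\ell$ the vanishing of $\binom{i}{m}_{\xi^{2}}$ (a nonzero coefficient forces each base-$\ell$ digit of $m$ to be dominated by the corresponding digit of $i$, hence $m\geq\ell$ or $i-m\geq\ell$) shows that every surviving summand of $\Delta(e_{1}^{i})$ has $e_{1}$-exponent $\geq\ell$ in its first or in its second factor. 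Using again that $I_{e}$ is a right ideal, this gives $p\otimes p\bigl(\Delta(e_{1}^{i})\,X\bigr)=0$ for all $i\geq\ell$ and all $X$, whence $(\Delta\otimes\Id)(\mathcal{R}')\in\ker(p^{\otimes 3})$; the statement for $\Id\otimes\Delta$ is identical, and for (3) one runs the same argument on $\Delta^{op}(x)\mathcal{R}'-\mathcal{R}'\Delta(x)$ evaluated on the generators $x$. Combining (i) and (ii), $p^{\otimes 3}\bigl((\Delta\otimes\Id)(\mathcal{R}^{<})\bigr)=p^{\otimes 3}\bigl((\Delta\otimes\Id)(\mathcal{R}^{q})\bigr)=p^{\otimes 3}(\mathcal{R}_{13}^{q}\mathcal{R}_{23}^{q})=p^{\otimes 3}(\mathcal{R}_{13}^{<}\mathcal{R}_{23}^{<})$, which is (1); (2) and (3) follow in the same way.

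The main obstacle is exactly this bookkeeping: since $I$ is not a two-sided ideal, one must arrange the computation so that the clean $e$-legs (rather than the $f$-legs, whose PBW reordering can leak out of $I$) always carry the vanishing, and one must keep track of the $q$-divided powers $\{1\}^{i}/(i)_{q}!$, which are singular at $q=\xi$ for $i\geq\ell$. Both difficulties disappear if one instead works with the induced operators on the objects of $\mathscr{C}^{H}$: every PBW monomial of $I$ contains $e_{1}^{\ell}$ or $f_{1}^{\ell}$ as a factor and so acts by zero on nilpotent weight modules; hence $\mathcal{R}^{<}$ and $\mathcal{R}^{q}$ induce the same operator on any tensor product of such modules, $\mathcal{R}^{q}$ acting there as an honest finite $R$-matrix, and (1)--(3) become immediate from the exact identities, provided one checks that these modules separate the relevant elements of $(\mathcal{U}^{<})^{\otimes n}$.
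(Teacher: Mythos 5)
Your overall strategy --- apply $p^{\otimes n}$ to the exact $R$-matrix identities and control the error terms --- is the right idea, but the execution has a genuine gap coming from a confusion about where the statement lives. Proposition~\ref{mdR} is an identity in $\mathcal{U}_q\mathfrak{sl}(2|1)^{\otimes n}$ \emph{at generic $q$}: all of $\mathcal{R}^q$, $\check{\mathcal{R}}^q$, $\mathcal{R}^{<}$ and the projection $p$ are defined over $\mathbb{C}(q)$ (or $\mathbb{C}[[h]]$), and the specialization to $q=\xi$ only happens \emph{afterwards}, in the proof of Theorem~\ref{ptur}. Your step (i) invokes the vanishing of the Gaussian binomials $\binom{i}{m}_{\xi^2}$ --- a root-of-unity phenomenon --- to argue that every summand of $\Delta(e_1^i)$ with $i\geq\ell$ lands in $I$ in one leg. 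At generic $q$ these binomials do not vanish, so $\Delta(e_1^\ell)$ really does contain cross terms such as $e_1^{\ell-1}k_1^{-1}\otimes e_1$ with both exponents $<\ell$, and your conclusion $(\Delta\otimes\Id)(\mathcal{R}')\in\ker(p^{\otimes3})$ does not follow by this route. The closing alternative suffers from the same issue: the modules of $\mathscr{C}^{H}$ are $\xi$-specialized, so evaluating on them cannot prove an identity in $(\mathcal{U}^{<})^{\otimes n}$ over $\mathbb{C}(q)$.

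There are also two smaller imprecisions. In step (ii), the cross term $\mathcal{R}_{13}^{<}\mathcal{R}_{12}'$ arising in item~(2) places $a'b''$ in leg~1 with $a'\in\mathcal{U}^{<}$ on the \emph{left} and $b''\in I_e$ on the right; the left-most factor is not in $I_e$, so the ``$I_e$ is a right ideal'' reasoning does not apply to that leg (one instead kills this term via the $f$-leg in slot~2). And in item~(3) the multiplication by $\Delta^{op}(x)$ and $\Delta(x)$ can act on the $f$-legs, which you acknowledge can leak under PBW reordering; ``always carry the vanishing on the $e$-legs'' is not a posture one can actually hold uniformly across the three items.

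The paper avoids all of this bookkeeping. It never decomposes $\mathcal{R}^q=\mathcal{R}^{<}+\mathcal{R}'$. It relies instead on the equalities built into the definition of $\mathcal{R}^{<}$, namely $\mathcal{R}^{<}=(p\otimes\Id)(\mathcal{R}^q)=(\Id\otimes p)(\mathcal{R}^q)$. Using whichever of these puts $p$ on the tensor leg that $\Delta\otimes\Id$ (respectively $\Id\otimes\Delta$) does \emph{not} touch, one can commute that inner $p$ past the coproduct and absorb it into the outer $p^{\otimes3}$ by idempotence, yielding $(p^{\otimes3})(\Delta\otimes\Id)(\mathcal{R}^{<})=(p^{\otimes3})(\Delta\otimes\Id)(\mathcal{R}^q)$ without examining $\Delta(e_1^i)$ at all; the matching manipulation on $\mathcal{R}_{13}^q\mathcal{R}_{23}^q$ is then immediate because legs~1 and~2 receive single factors, not products. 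For item~(3) the paper again cycles between the two expressions for $\mathcal{R}^{<}$ so that the inner $p$ always sits on the leg whose multiplication by $x$, $k_i^{\pm1}$, etc.\ is harmless. If you grant the definitional identity $(p\otimes\Id)(\mathcal{R}^q)=(\Id\otimes p)(\mathcal{R}^q)$ (which the paper asserts from the matched $e_1$/$f_1$ exponents in $\check{\mathcal{R}}^q$), the proof is essentially formal. I would recommend you redo the argument this way rather than patching the root-of-unity step, which simply cannot be made to work at generic $q$.
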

%%----------------------------
\begin{proof}
The above relations and $p \circ p=p$ give us $(p \otimes p \otimes p)(\Delta \otimes \Id(\mathcal{R}^{q}))=(p \otimes p \otimes p)(\Delta \otimes \Id)(\Id \otimes p(\mathcal{R}^{q}))=(p \otimes p \otimes p)(\Delta \otimes \Id)(\mathcal{R}^{<})$. At the same time $(p \otimes p \otimes p)(\mathcal{R}_{13}^{q}\mathcal{R}_{23}^{q})=(p \otimes p \otimes p)((p \otimes \Id \otimes \Id)(\mathcal{R}_{13}^{q})(\Id \otimes p \otimes \Id)(\mathcal{R}_{23}^{q}))=(p \otimes p \otimes p)(\mathcal{R}_{13}^{<}\mathcal{R}_{23}^{<})$. 
So
\begin{equation} 
(p \otimes p \otimes p)(\Delta \otimes \Id(\mathcal{R}^{<}))=(p \otimes p \otimes p)\mathcal{R}_{13}^{<}\mathcal{R}_{23}^{<}.
\end{equation}
Similarly we also have
\begin{equation} 
(p \otimes p \otimes p)(\Id \otimes \Delta)(\mathcal{R}^{<})=(p \otimes p \otimes p)(\mathcal{R}_{13}^{<}\mathcal{R}_{12}^{<}).
\end{equation}
For the third equality, it is enough to check on the generator elements.

It is true when $x=h_{i}$ 
because $\Delta(h_{i})$ is symmetric and $\Delta(h_{i})(e_{j} \otimes f_{j})=e_{j} \otimes h_{i}f_{j}+h_{i}e_{j} \otimes f_{j}=e_{j} \otimes f_{j}(h_{i}-a_{ij})+e_{j}(h_{i}+a_{ij}) \otimes f_{j}=e_{j} \otimes f_{j}(1 \otimes (h_{i}-a_{ij})+(h_{i}+a_{ij}) \otimes 1)=(e_{j} \otimes f_{j})\Delta(h_{i})$.

For $x=e_{i}$ we have $(p \otimes p)(\Delta^{op}(e_{i})\mathcal{R}^{q})=(p \otimes p)(1 \otimes e_{i} + e_{i} \otimes k_{i}^{-1})\mathcal{R}^{q}=(p \otimes p)((1 \otimes e_{i})\mathcal{R}^{q})+(p \otimes p)((e_{i} \otimes k_{i}^{-1})\mathcal{R}^{q})=(p \otimes p)((1 \otimes e_{i})\mathcal{R}^{<})+(p \otimes p)((e_{i} \otimes k_{i}^{-1})\mathcal{R}^{<})=(p \otimes p)(\Delta^{op}(e_{i})\mathcal{R}^{<})$. On the other side $(p \otimes p)(\mathcal{R}^{q}\Delta(e_{i}))=(p \otimes p)(\mathcal{R}^{<}\Delta(e_{i}))$. So we have $(p \otimes p)(\Delta^{op}(e_{i})\mathcal{R}^{<})=(p \otimes p)(\mathcal{R}^{<}\Delta(e_{i}))$.

 For $x=f_{i}$ we proceed analogously.
So we deduce that $$(p \otimes p)(\Delta^{op}(x)\mathcal{R}^{<})=(p \otimes p)(\mathcal{R}^{<}\Delta(x)) \ \forall x \in \mathcal{U}_{q}\mathfrak{sl}(2|1).$$
\end{proof}

Let $\mathcal{K}$ be the operator in $\mathscr{C}^{H} \otimes \mathscr{C}^{H}$ defined by
$$\mathcal{K}=\xi^{-h_1 \otimes h_2 -h_2 \otimes h_1 - 2h_2 \otimes h_2}$$
that is $\forall V, W \in \mathscr{C}^{H}, \mathcal{K}_{V \otimes W}=\exp{(\rho_{V \otimes W}(\frac{2i\pi}{\ell}(-h_1 \otimes h_2 -h_2 \otimes h_1 - 2h_2 \otimes h_2)))}$ is a linear map on the finite dimensional vector space $V \otimes W$. For example, if $w_{\rho, \sigma, p} \otimes w_{\rho^{'}, \sigma^{'}, p^{'}} \in V_\mu \otimes V_{\mu^{'}}$, one has 

$\mathcal{K}_{V \otimes W}(w_{\rho, \sigma, p} \otimes w_{\rho^{'}, \sigma^{'}, p^{'}})\\
=\xi^{-(\mu_{1}+\rho-\sigma-2p)(\mu_{2}^{'}+\sigma^{'}+p^{'})-(\mu_{2}+\sigma+p)(\mu_{1}^{'}+\rho^{'}-\sigma^{'}-2p^{'})-2(\mu_{2}+\sigma+p)(\mu_{2}^{'}+\sigma^{'}+p^{'})}w_{\rho, \sigma, p} \otimes w_{\rho^{'}, \sigma^{'}, p^{'}}$.
We have
\begin{equation} \label{ptlh}
\Delta \otimes \Id(\mathcal{K})=\mathcal{K}_{13}\mathcal{K}_{23}, \Id \otimes \Delta(\mathcal{K})=\mathcal{K}_{13}\mathcal{K}_{12}.
\end{equation}

Let $\check{\mathcal{R}}^{<}$ be the universal truncated quasi $R$-matrix of $\mathcal{U}_{q}\mathfrak{sl}(2|1), q=e^{h} \in \mathbb{C}[[h]]$ given by $\check{\mathcal{R}}^{<}=p \otimes p(\check{\mathcal{R}}^{q})=\Id \otimes p(\check{\mathcal{R}}^{q})=p \otimes \Id(\check{\mathcal{R}}^{q})$, i.e:
$$\check{\mathcal{R}}^{<}=\sum_{i=0}^{\ell-1}\frac{\{1\}^{i}e_1^{i} \otimes f_1^{i}}{(i)_{q}!} \sum_{j=0}^{1}\frac{(-\{1\})^{j}e_3^{j} \otimes f_3^{j}}{(j)_{q}!} \sum_{k=0}^{1}\frac{(-\{1\})^{k}e_2^{k} \otimes f_2^{k}}{(k)_{q}!}.$$
%where $\pi: \mathcal{U}_{q}\mathfrak{sl}(2|1) \rightarrow \mathcal{U}_{\xi}^{H}\mathfrak{sl}(2|1)$ is the projection of $\mathcal{U}_{q}\mathfrak{sl}(2|1)$ on $\mathcal{U}_{\xi}^{H}\mathfrak{sl}(2|1)$, i.e:
Set $\check{\mathcal{R}}=\check{\mathcal{R}}^{<}\vert_{q=\xi}$, i.e:
$$\check{\mathcal{R}}=\sum_{i=0}^{\ell-1}\frac{\{1\}^{i}e_1^{i} \otimes f_1^{i}}{(i)_{\xi}!} \sum_{j=0}^{1}\frac{(-\{1\})^{j}e_3^{j} \otimes f_3^{j}}{(j)_{\xi}!} \sum_{k=0}^{1}\frac{(-\{1\})^{k}e_2^{k} \otimes f_2^{k}}{(k)_{\xi}!} \in \mathcal{U}^{H} \otimes \mathcal{U}^{H}.$$

\begin{The} \label{ptur}
The operator $\mathcal{R}=\check{\mathcal{R}}\mathcal{K}$ led to a braiding $\{c_{V,W}\}$ in the category $\mathscr{C}^{H}$ where $c_{V,W}: V \otimes W \rightarrow W \otimes V$ is determined by $v \otimes w \mapsto \tau(\mathcal{R}(v \otimes w))$. Here $\tau: V \otimes W \rightarrow W \otimes V, v \otimes w \mapsto (-1)^{\deg v \deg w}w \otimes v$.
\end{The}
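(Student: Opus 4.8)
The plan is to reduce every braiding axiom to the ``identities modulo truncation'' already recorded for $\mathcal{R}^{<}$ in Proposition \ref{mdR} and for $\mathcal{K}$ in \eqref{ptlh}, exploiting the feature of $\mathscr{C}^{H}$ that the complementary subspace $I$ acts by zero on every object. First I would note that if $V\in\mathscr{C}^{H}$ then $e_{1}^{\ell}=f_{1}^{\ell}=0$ on $V$, so any PBW monomial $e_{1}^{p'}e_{3}^{\sigma'}e_{2}^{\rho'}h_{1}^{s_{1}}h_{2}^{s_{2}}f_{2}^{\rho}f_{3}^{\sigma}f_{1}^{p}$ with $p\geq\ell$ or $p'\geq\ell$ acts by zero; hence the action of any $u\in\mathcal{U}_{q}\mathfrak{sl}(2|1)$ on an object of $\mathscr{C}^{H}$ depends only on $p(u)$, and consequently $\rho_{V\otimes W}=\rho_{V\otimes W}\circ(p\otimes p)$ and $\rho_{U\otimes V\otimes W}=\rho_{U\otimes V\otimes W}\circ(p\otimes p\otimes p)$. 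In particular the a priori infinite series $\check{\mathcal{R}}^{q}$ acts on any $V\otimes W$ as a finite sum in which only the terms with $i\leq\ell-1$ survive, for which the denominators $(i)_{\xi}!$ are nonzero, so the specialization $q=\xi$ is harmless and $\rho_{V\otimes W}(\mathcal{R}^{<})|_{q=\xi}=\rho_{V\otimes W}(\check{\mathcal{R}})\,\mathcal{K}_{V\otimes W}$, which is precisely the operator the statement denotes $\mathcal{R}$ on $V\otimes W$; write $\mathcal{R}_{V,W}$ for it. The upshot is that on objects of $\mathscr{C}^{H}$ the truncated $\mathcal{R}$ behaves exactly as an honest universal $R$-matrix would.

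Next I would check that $c_{V,W}$ is a morphism of $\mathscr{C}^{H}$. Since $\tau$ is an even, weight-preserving linear isomorphism, it is enough to see that $\mathcal{R}_{V,W}$ intertwines the $\Delta$- and $\Delta^{op}$-actions, i.e.\ $\mathcal{R}_{V,W}\circ\rho_{V\otimes W}(\Delta^{op}(x))=\rho_{V\otimes W}(\Delta(x))\circ\mathcal{R}_{V,W}$ for all $x\in\mathcal{U}^{H}$; this is obtained by applying $\rho_{V\otimes W}$ to the identity $(p\otimes p)(\mathcal{R}^{<}\Delta^{op}(x))=(p\otimes p)(\Delta(x)\mathcal{R}^{<})$ of Proposition \ref{mdR}(3) and dropping the projections as in the previous paragraph. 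Conjugating by the super-flip $\tau$, which carries the $\Delta$-action on $W\otimes V$ to the $\Delta^{op}$-action on $V\otimes W$ with the Koszul signs included, then yields $c_{V,W}\circ\rho_{V\otimes W}(x)=\rho_{W\otimes V}(x)\circ c_{V,W}$, so $c_{V,W}\in\Hom_{\mathscr{C}^{H}}(V\otimes W,W\otimes V)$. For invertibility I would observe that $\rho_{V\otimes W}(\check{\mathcal{R}})$ is unipotent, the correction term $\check{\mathcal{R}}-1\otimes1$ involving only operators that act nilpotently in the first tensor factor; that $\mathcal{K}_{V\otimes W}$ is the exponential of a diagonalizable operator, hence invertible; and that $\tau$ is invertible; hence $c_{V,W}=\tau\circ\rho_{V\otimes W}(\check{\mathcal{R}})\circ\mathcal{K}_{V\otimes W}$ is an isomorphism.

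Then I would verify naturality and the two hexagon relations. Naturality is immediate: for $f\colon V\to V'$, $g\colon W\to W'$ in $\mathscr{C}^{H}$, the operators $\rho(\check{\mathcal{R}})$ and $\mathcal{K}$ commute with $f\otimes g$ (the former because $\check{\mathcal{R}}\in\mathcal{U}^{H}\otimes\mathcal{U}^{H}$ and $f,g$ are module maps, the latter because $f,g$ preserve weights), while $\tau_{V',W'}\circ(f\otimes g)=(g\otimes f)\circ\tau_{V,W}$, so $c_{V',W'}\circ(f\otimes g)=(g\otimes f)\circ c_{V,W}$. For the compatibility with $\otimes$, I would apply $\rho_{U\otimes V\otimes W}$ to the identities $(p\otimes p\otimes p)(\Delta\otimes\Id)(\mathcal{R}^{<})=(p\otimes p\otimes p)\mathcal{R}^{<}_{13}\mathcal{R}^{<}_{23}$ and $(p\otimes p\otimes p)(\Id\otimes\Delta)(\mathcal{R}^{<})=(p\otimes p\otimes p)\mathcal{R}^{<}_{13}\mathcal{R}^{<}_{12}$ of Proposition \ref{mdR}(1)--(2) (with \eqref{ptlh} accounting for the $\mathcal{K}$-factor), drop the projections by the first paragraph, and rewrite through $\tau$: the standard (super-)Drinfeld bookkeeping converts $(\Delta\otimes\Id)(\mathcal{R})=\mathcal{R}_{13}\mathcal{R}_{23}$ into $c_{U\otimes V,W}=(c_{U,W}\otimes\Id_{V})\circ(\Id_{U}\otimes c_{V,W})$ and $(\Id\otimes\Delta)(\mathcal{R})=\mathcal{R}_{13}\mathcal{R}_{12}$ into $c_{U,V\otimes W}=(\Id_{V}\otimes c_{U,W})\circ(c_{U,V}\otimes\Id_{W})$.

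The main obstacle, I expect, is the reduction in the first paragraph rather than any of the subsequent manipulations: one must make sure that the ``only modulo truncation'' nature of Proposition \ref{mdR}, and the apparent division by zero in $(i)_{\xi}!$ for $i\geq\ell$, do no harm. This is exactly the point at which the non-quasitriangularity of $\mathcal{U}_{\xi}\mathfrak{sl}(2|1)$ noted in the Remark is circumvented, the reason being that the monomials of $I$ annihilate every object of $\mathscr{C}^{H}$, so that $\mathcal{R}$ acts on these modules like a genuine universal $R$-matrix. Once this is secured, what remains is the classical argument that a universal $R$-matrix produces a braiding, the only additional care being the tracking of the Koszul signs carried by the super-flip $\tau$.
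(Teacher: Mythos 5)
Your proposal is correct and follows essentially the same route as the paper: reduce to the quasitriangularity identities, derive them over $\mathbb{C}(q)$ from Proposition~\ref{mdR}, use the absence of poles in $\check{\mathcal{R}}^{<}$ to specialize at $q=\xi$, and account for the Cartan factor by the grouplike-type relation~\eqref{ptlh}. The one point where your write-up is more careful than the paper's is in making explicit that the complement $I=\Ker p$ annihilates every object of $\mathscr{C}^{H}$ (since $e_{1}^{\ell}=f_{1}^{\ell}=0$ there), which is what licenses discarding the projections: the paper's intermediate line $\Delta\otimes\Id(\check{\mathcal{R}}^{<})=\check{\mathcal{R}}^{<}_{13}(\chi_q)_{13}(\check{\mathcal{R}}^{<}_{23})$ is, literally taken, only an identity after applying $p^{\otimes 3}$ or after passing to representations (the right-hand side produces $f_1$-exponents up to $2\ell-2$ in the third slot), and your formulation repairs this by passing to operators before dropping the~$p$'s.
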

\begin{proof}
It is sufficient to prove that the operator $\mathcal{R}$ satisfies
\begin{equation} \label{dkcm}
\Delta \otimes \Id(\mathcal{R})=\mathcal{R}_{13}\mathcal{R}_{23}, \Id \otimes \Delta(\mathcal{R})=\mathcal{R}_{13}\mathcal{R}_{12}, \mathcal{R}\Delta^{op}(x)=\Delta(x)\mathcal{R}
\end{equation}
for all $x \in \mathcal{U}^{H}$. %Here the product is in the tensor product "super" of two superalgebra: $(a \otimes b)(c \otimes d)=(-1)^{\deg b \deg c}ac \otimes bd$. 

Let $\chi_{q}: \mathcal{U}_{q}\mathfrak{sl}(2|1)\otimes \mathcal{U}_{q}\mathfrak{sl}(2|1)\rightarrow \mathcal{U}_{q}\mathfrak{sl}(2|1) \otimes \mathcal{U}_{q}\mathfrak{sl}(2|1)$ be the automorphism determined by $x \otimes y \mapsto \mathcal{K}_{q}(x \otimes y)\mathcal{K}_{q}^{-1}$, this one induces an automorphism $\chi_{\xi}: \mathcal{U}^{H} \otimes \mathcal{U}^{H}\rightarrow \mathcal{U}^{H} \otimes \mathcal{U}^{H}$. We consider the element $\check{\mathcal{R}}^{<}$ of $\mathcal{U}_q\mathfrak{sl}(2|1) \otimes \mathcal{U}_q\mathfrak{sl}(2|1)$, Proposition \ref{mdR} implies the relations
\begin{align} \label{qhR<}
&\Delta \otimes \Id(\check{\mathcal{R}})=\check{\mathcal{R}}_{13}\left(\chi_{\xi}\right)_{13}(\check{\mathcal{R}}_{23}), \\
&\Id \otimes \Delta(\check{\mathcal{R}})=\check{\mathcal{R}}_{13}\left(\chi_{\xi}\right)_{13}(\check{\mathcal{R}}_{12}), \\
&\check{\mathcal{R}}\left(\chi_{\xi}\right)(\Delta^{op}(x))=\Delta(x)\check{\mathcal{R}} \  \text{for all} \ x \in \mathcal{U}^{H}.
\end{align}
We will prove the equality (\ref{qhR<}), and that the other two are similar. From the first 
equality of the Proposition \ref{mdR}, we deduce that $(\Delta \otimes \Id)(\check{\mathcal{R}}^{<}\mathcal{K}_q)=\check{\mathcal{R}}^{<}_{13}(\mathcal{K}_q)_{13}\check{\mathcal{R}}^{<}_{23}(\mathcal{K}_q)_{23}$. The term in the left of this equality is equal to $(\Delta \otimes \Id)(\check{\mathcal{R}}^{<})(\Delta \otimes \Id)(\mathcal{K}_q)=\Delta \otimes \Id(\check{\mathcal{R}}^{<})(\mathcal{K}_q)_{13}(\mathcal{K}_q)_{23}$. 
The right one is equal to $\check{\mathcal{R}}^{<}_{13}(\mathcal{K}_q)_{13}\check{\mathcal{R}}^{<}_{23}(\mathcal{K}_q)_{23}=\check{\mathcal{R}}^{<}_{13}\left(\chi_{q}\right)_{13}(\check{\mathcal{R}}^{<}_{23})(\mathcal{K}_q)_{13}(\mathcal{K}_q)_{23}$. Now because $\mathcal{K}_q$ is invertible, the result is $\Delta \otimes \Id(\check{\mathcal{R}}^{<})= \check{\mathcal{R}}^{<}_{13}\left(\chi_{q}\right)_{13}(\check{\mathcal{R}}^{<}_{23})$.

The element $\check{\mathcal{R}}^{<}$ has no pole when $q$ is a root of unity of order $\ell$. Hence we can specialize this relation at $q=\xi$ and $\Delta \otimes \Id(\check{\mathcal{R}})=\check{\mathcal{R}}_{13}\left(\chi_{\xi}\right)_{13}(\check{\mathcal{R}}_{23})$. Finally, as operators on $V_1 \otimes V_2 \otimes V_3$ in which $V_1, V_2, V_3 \in \mathscr{C}^{H}$, equation (\ref{ptlh}) implies that 
\begin{align*}
\Delta \otimes \Id(\mathcal{R})&=(\Delta \otimes \Id)(\check{\mathcal{R}})(\Delta \otimes \Id)(\mathcal{K})\\
&=\check{\mathcal{R}}_{13}\left(\chi_{\xi}\right)_{13}(\check{\mathcal{R}}_{23}) \mathcal{K}_{13}\mathcal{K}_{23}\\
&=\check{\mathcal{R}}_{13}\mathcal{K}_{13}\check{\mathcal{R}}_{23}\mathcal{K}_{13}^{-1}\mathcal{K}_{13}\mathcal{K}_{23}\\
&=\check{\mathcal{R}}_{13}\mathcal{K}_{13} \check{\mathcal{R}}_{23}\mathcal{K}_{23}\\
&=\mathcal{R}_{13}\mathcal{R}_{23}.
\end{align*}
Thus the relations of equation (\ref{dkcm}) hold.
\end{proof}

The category $\mathscr{C}^{H}$ is pivotal and braided with the braiding $c_{V,W}: V \otimes W \rightarrow W \otimes V, v \otimes w \mapsto \tau \circ \mathcal{R}(v \otimes w)$ where $V, W \in \mathscr{C}^{H}$.
%%%%%%%%%%%%%%%%%%%%%%%%%%%%%%%%%%%%%%%%%%%%%%%%%%%%%%%%%%%%%%%%%%%%%%%%%
%%%%%%%%%%%%%%%%%%%%%%%%%%%%%%%%%%%%%%%%%%%%%%%%%%%%%%%%%%%%%%%%%%%
\subsubsection{Ribbon category $\mathscr{C}^{H}$}
To prove the next proposition we will use the semi-simplicity of $\mathscr{C}_{g}$ ($g \in \mathit{G}\backslash \mathit{G}_{s}$) which is proven later in Theorem \ref{dl2}.
\begin{Pro} \label{twist}
The family of isomorphisms $\theta_V: V \rightarrow V$ determined by $\theta_V=(\Id_{V}\otimes \tev_{V})(c_{V,V}\otimes \Id_{V^{*}})(\Id_{V}\otimes \coev_{V}), V \in \mathscr{C}^{H}$ is a twist. That is $\theta_{V}=\theta_{V}^{'} \ \forall V \in \mathscr{C}^{H}$ where $\theta_{V}^{'}=(\ev_V \otimes \Id_V)(\Id_{V^{*}} \otimes c_{V,V})(\tcoev_V \otimes \Id_V)$.
\end{Pro}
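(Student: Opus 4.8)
The plan is to reduce the identity $\theta_V=\theta_V'$ for an arbitrary $V\in\mathscr C^H$ to the case of typical simple modules, where it can be checked by a direct computation of the action of the ribbon element on a highest weight vector. The first step is to observe that both $\theta$ and $\theta'$ are monoidal natural transformations of the identity functor (naturality follows from naturality of the braiding and of $\ev,\coev,\tev,\tcoev$; monoidality from the hexagon identities in Theorem \ref{ptur} together with the pivotal structure). Hence it suffices to verify $\theta_V=\theta_V'$ on a generating set of objects, and since every object of $\mathscr C^H$ lies in a single graded piece $\mathscr C^H_{\overline\mu}$, I would work one coset at a time.

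The second step exploits the grading $\{\mathscr C^H_{\overline g}\}_{\overline g\in G}$. For $\overline g\notin G_s$ the category $\mathscr C^H_{\overline g}$ is semisimple (Theorem \ref{dl2}, invoked as permitted), and every simple object there is a typical module $V_\mu$ of dimension $4\ell$. On such a $V_\mu$ both $\theta_{V_\mu}$ and $\theta'_{V_\mu}$ are, by naturality applied to $\End_{\mathscr C^H}(V_\mu)\cong\mathbb C$, scalar operators; so I only need to compare the two scalars. Writing $\mathcal R=\check{\mathcal R}\mathcal K$ and pushing the (co)evaluations through, $\theta_{V_\mu}$ acts as the standard ribbon/Drinfeld-type element built from $\mathcal R$ and the pivotal element $\phi_0=k_1^{-\ell}k_2^{-2}$, evaluated on the highest weight vector $w_{0,0,0}$; all terms of $\check{\mathcal R}$ with a positive power of $e_1,e_2,e_3$ in the second leg annihilate $w_{0,0,0}$, so only the leading term $1\otimes 1$ of $\check{\mathcal R}$ contributes, and one is left with the scalar by which $\phi_0^{-1}\cdot(\text{a diagonal }\mathcal K\text{-contribution})$ acts on $w_{0,0,0}$. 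The same computation for $\theta'_{V_\mu}$, using the left partial trace and $\tcoev$, gives the \emph{a priori} possibly different scalar coming from the left-handed version; the key point is that the two resulting scalars are both expressed purely in terms of $\xi^{\mu_1},\xi^{\mu_2}$ and integer powers of $\xi$, and a short direct comparison shows they coincide. This settles $\theta_V=\theta_V'$ for all $V$ whose support avoids $G_s$.

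The third step handles the remaining cosets $\overline g\in G_s$ by a density/continuity argument: fix $V\in\mathscr C^H_{\overline g}$, and deform the highest weights of its composition factors to nearby generic values. Concretely, $V$ sits in a family $V_t$ of $\mathcal U^H_\xi\mathfrak{sl}(2|1)$-modules (same underlying space, structure constants polynomial in the weight parameters $t$), and $\theta_{V_t}-\theta'_{V_t}$ is a $\mathbb C$-regular (indeed entire, since all the sums defining $\mathcal R$ are finite and $\mathcal K$ is an exponential) function of $t$ that vanishes on the dense set of $t$ for which the weights avoid $G_s$; hence it vanishes identically. (Equivalently, one checks directly on indecomposable weight modules that $\theta$ and $\theta'$ are determined by their restriction to a Zariski-dense family of typical modules.) Therefore $\theta_V=\theta'_V$ for every $V\in\mathscr C^H$, which is exactly the twist compatibility $\theta_{V^*}=(\theta_V)^*$.

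The main obstacle I anticipate is the honest bookkeeping in step two: one must carefully track the signs coming from the super structure (the $(-1)^{\deg}$ in $\tev_V,\tcoev_V$ and in $\tau$) and the precise power of $\xi$ produced by $\mathcal K$ and by $\phi_0=k_1^{-\ell}k_2^{-2}$ acting on $w_{0,0,0}$, and confirm that the left and right renormalized twists give the \emph{same} scalar rather than scalars differing by a sign or a root of unity. Once the typical scalar is pinned down, steps one and three are formal; if the two scalars failed to match one would only get a balanced (rather than ribbon) structure, so this sign check is genuinely the crux.
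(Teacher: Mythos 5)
Your first two steps are essentially the paper's argument: reduce to one graded piece, show that on a typical $V_\mu$ in a non-singular degree both $\theta_{V_\mu}$ and $\theta'_{V_\mu}$ are scalars (because $\End(V_\mu)\cong\mathbb C$), and compute both scalars by observing that the positive-root part of $\check{\mathcal R}$ kills a highest (resp.\ lowest) weight vector so only $\mathcal K$ and $\phi_0$ contribute. This matches what the paper does, up to one small slip of terminology: in $\check{\mathcal R}=\sum e^{\bullet}\otimes f^{\bullet}$ the $e$'s sit in the \emph{first} leg, and for $\theta_V$ it is the first leg acting on $w_{0,0,0}$ that kills the nontrivial terms (dually, for $\theta'_V$ the paper evaluates on the lowest weight vector so the $f$'s in the second leg vanish). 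This is cosmetic.

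The genuine gap is in your third step. You assert that an arbitrary $V\in\mathscr C^H_{\overline g}$ with $\overline g\in G_s$ ``sits in a family $V_t$ of $\mathcal U^H_\xi\mathfrak{sl}(2|1)$-modules (same underlying space, structure constants polynomial in $t$)'' whose weights avoid $G_s$ for generic $t$, and then apply analytic continuation. This family does not exist in general. Already for a simple atypical module the underlying space has dimension strictly less than $4\ell$, whereas any module in a non-singular degree is a direct sum of $4\ell$-dimensional typicals; so there is no deformation with fixed underlying space that moves a singular-degree simple module to non-singular degrees. The problem is worse for non-split extensions in $\mathscr C^H_{\overline g}$, which collapse to direct sums in semisimple degrees. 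The ``equivalently'' in your parenthetical is doing exactly the work you have not done. The paper sidesteps this entirely with a tensoring argument: given $W\in\mathscr C^H_{\overline g}$, choose $\overline h$ with $\overline h,\overline g+\overline h\notin G_s$ (possible since $G_s$ cannot cover $G$), pick a typical $V\in\mathscr C^H_{\overline h}$; then $W\otimes V$ lies in the semisimple degree $\overline{g+h}$, so $\theta_{W\otimes V}=\theta'_{W\otimes V}$, and the balancing identity $\theta_{W\otimes V}=(\theta_W\otimes\theta_V)c_{V,W}c_{W,V}$ together with $\theta_V=\theta'_V$ and invertibility of the braiding and of $\theta_V$ yields $\theta_W=\theta'_W$. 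You should replace your deformation step with this argument (or prove the claimed deformation exists, which I do not see how to do).
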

%%------------
\begin{proof}
	Firstly, if $V$ is a typical module of highest weight $\mu=(\mu_{1},\mu_{2}),
	V \in \mathscr{C}_{g}^{H}, g \in \mathit{G} \backslash \mathit{G}_{s}$, we have $\theta_{V}^{'}=(\ev_{V}\otimes \Id_{V})(\Id_{V^{*}}\otimes c_{V,V})(\tcoev_{V}\otimes \Id_{V})=X_{1}X_{2}X_{3}$.\\
	We use the vector of lowest weight $(\mu_{1}-2\ell+2, \mu_{2}+\ell)$ of $V, \ w_{1,1,\ell-1}:=w_{\infty},$ to calculate.
	\begin{align*}
	X_{3}(w_{\infty})&=\sum_{\rho, \sigma, p}(-1)^{\rho + \sigma}w_{\rho, \sigma, p}^{*} \otimes \phi_{0}^{-1}w_{\rho, \sigma, p} \otimes w_{\infty}\\
	&=\sum_{\rho, \sigma, p}(-1)^{\rho + \sigma}\xi^{\ell \mu_{1}+2\mu_{2}+2\sigma+2p}w_{\rho, \sigma, p}^{*} \otimes w_{\rho, \sigma, p} \otimes w_{\infty}.
	\end{align*}
		$X_{2}X_{3}(w_{\infty})=\sum_{\rho, \sigma, p}(-1)^{\rho + \sigma}\xi^{\ell \mu_{1}+2\mu_{2}+2\sigma+2p}w_{\rho, \sigma, p}^{*} \otimes (\tau \circ \mathcal{R})(w_{\rho, \sigma, p} \otimes w_{\infty})$.
		\begin{align*}
	\mathcal{K}(w_{\rho, \sigma, p} \otimes w_{\infty})&=\xi^{-h_{1}\otimes h_{2}-h_{2}\otimes h_{1}-2h_{2}\otimes h_{2}}w_{\rho, \sigma, p} \otimes w_{\infty}\\
	&=\xi^{-\mu_{1}(\mu_{2}+\sigma+p+\ell)-\mu_{2}(\mu_{1}+2\mu_{2}+\sigma+\rho+2)-2(\sigma+p)}w_{\rho, \sigma, p} \otimes w_{\infty}.
	  \end{align*}
		\begin{align*}
		&X_{2}X_{3}(w_{\infty})\\
		&=\sum_{\rho, \sigma, p}(-1)^{\rho + \sigma}\xi^{\ell \mu_{1}+2\mu_{2}}\xi^{-\mu_{1}(\mu_{2}+\sigma+p+\ell )-\mu_{2}(\mu_{1}+2\mu_{2}+\sigma+\rho+2)}w_{\rho, \sigma, p}^{*} \otimes w_{\infty} \otimes w_{\rho, \sigma, p}\\
		&=\sum_{\rho, \sigma, p}(-1)^{\rho + \sigma}\xi^{-\mu_{1}(\mu_{2}+\sigma+p)-\mu_{2}(\mu_{1}+2\mu_{2}+\sigma+\rho)}w_{\rho, \sigma, p}^{*} \otimes w_{\infty} \otimes w_{\rho, \sigma, p}.
		\end{align*}
So
		\begin{align*}
		X_{1}X_{2}X_{3}(w_{\infty})&=\sum_{\rho, \sigma, p}(-1)^{\rho+\sigma}\xi^{-\mu_{1}(\mu_{2}+\sigma+p)-\mu_{2}(\mu_{1}+2\mu_{2}+\sigma+\rho)}w_{\rho, \sigma, p}^{*}(w_{\infty}) \otimes w_{\rho, \sigma, p}\\
		&=\xi^{-\mu_{1}(\mu_{2}+\ell )-\mu_{2}(\mu_{1}+2\mu_{2}+2)} w_{\infty}.
		\end{align*}
%%%----------------------------		
Secondly, we have $$\theta_{V}=(\Id_{V}\otimes \tev_{V})(c_{V,V}\otimes \Id_{V^{*}})(\Id_{V}\otimes \coev_{V})=Y_{1}Y_{2}Y_{3}.$$
$Y_{3}(w_{0,0,0})=\sum_{\rho, \sigma, p}w_{0,0,0}\otimes w_{\rho, \sigma, p} \otimes w_{\rho, \sigma, p}^{*}, \\
Y_{2}Y_{3}(w_{0,0,0})=\sum_{\rho, \sigma, p}(\tau \circ \mathcal{R})(w_{0,0,0}\otimes w_{\rho, \sigma, p}) \otimes w_{\rho, \sigma, p}^{*}$ where \\ 
$\mathcal{K}(w_{0,0,0}\otimes w_{\rho, \sigma, p})=\xi^{-\mu_{1}(\mu_{2}+\sigma+p)-\mu_{2}(\mu_{1}+\rho-\sigma -2p)-2\mu_{2}(\mu_{2}+\sigma+p)}w_{0,0,0}\otimes w_{\rho, \sigma, p}$ and\\
$\mathcal{R}(w_{0,0,0}\otimes w_{\rho, \sigma, p})=\xi^{-\mu_{1}(\mu_{2}+\sigma+p)-\mu_{2}(\mu_{1}+\rho-\sigma -2p)-2\mu_{2}(\mu_{2}+\sigma+p)}w_{0,0,0}\otimes w_{\rho, \sigma, p}$.\\
$Y_{2}Y_{3}(w_{0,0,0})=\sum_{\rho, \sigma, p}\xi^{-\mu_{1}(\mu_{2}+\sigma+p)-\mu_{2}(\mu_{1}+\rho-\sigma -2p)-2\mu_{2}(\mu_{2}+\sigma+p)}w_{\rho, \sigma, p} \otimes w_{0,0,0}\otimes w_{\rho, \sigma, p}^{*}.$ 
	\begin{align*}
&Y_{1}Y_{2}Y_{3}(w_{0,0,0})\\
&=\sum_{\rho, \sigma, p}\xi^{-\mu_{1}(\mu_{2}+\sigma+p)-\mu_{2}(\mu_{1}+\rho-\sigma -2p)-2\mu_{2}(\mu_{2}+\sigma+p)}w_{\rho, \sigma, p} \otimes w_{\rho, \sigma, p}^{*}((-1)^{\rho+\sigma}\phi_{0} w_{0,0,0})\\
&=\sum_{\rho, \sigma, p}(-1)^{\rho+\sigma}\xi^{-\mu_{1}(\mu_{2}+\sigma+p)-\mu_{2}(\mu_{1}+\rho-\sigma -2p)-2\mu_{2}(\mu_{2}+\sigma+p)}w_{\rho, \sigma, p} \otimes w_{\rho, \sigma, p}^{*}(\xi^{-\ell\mu_{1}-2\mu_{2}}w_{0,0,0})\\
&=\xi^{-2\mu_{1}\mu_{2}-2\mu_{2}^{2}-2\mu_{2}-\ell\mu_{1}}w_{0,0,0} =\xi^{-\mu_{1}(\mu_{2}+\ell)-\mu_{2}(\mu_{1}+2\mu_{2}+2)}w_{0,0,0}.
	\end{align*}
	We can deduce that $\theta_{V}=\theta_{V}^{'}$ for every typical module $V$ with highest weight $\mu=(\mu_{1},\mu_{2}), V \in \mathscr{C}_{g}^{H}, g \in \mathit{G}\backslash \mathit{G}_{s}$. Note that the calculation does not change if we reverse the parity of vectors. So we have the affirmation for a semi-simple module in degree $g \in \mathit{G}\backslash \mathit{G}_{s}$. 
Let a module $W \in \mathscr{C}_{g}^{H}, g \in \mathit{G}$. By Theorem \ref{dl2} it exists $h \in \mathit{G}$ such that $\mathscr{C}_{h}^{H}, \mathscr{C}_{g+h}^{H}$ are semi-simple. For a module $V \in \mathscr{C}_{h}^{H}$ we have $W \otimes V \in \mathscr{C}_{g+h}^{H}$ is semi-simple.

Because $\theta_{W \otimes V}=(\theta_{W} \otimes \theta_{V})c_{V,W}c_{W,V}= \theta_{W \otimes V}^{'}=(\theta_{W}^{'} \otimes \theta_{V}^{'})c_{V,W}c_{W,V}$ and $\theta_{V}=\theta_{V}^{'}$, we deduce that $\theta_{W}=\theta_{W}^{'} \, \forall W \in \mathscr{C}^{H}$. i.e. the family $\theta_{V}$ is a twist.
	\end{proof}

\begin{Lem} \label{kihieu alpha}
Let $\mu =(\mu_1,\mu_2)\in \mathbb{C}\times \mathbb{C}$, then the value of the twist $\theta_{V_{\mu}}$ on a simple module $V_{\mu}$ with highest weight $\mu$ is $\xi^{-\ell\mu_1-2\mu_2(1+\mu_1+\mu_2)}\Id_{V_{\mu}}$. That is,
$$\theta_{V_{\mu}}=\xi^{-\ell\mu_1-2\mu_2(1+\mu_1+\mu_2)}\Id_{V_{\mu}}=-\xi^{-2(\alpha_{2}^{2}+\alpha_{1}\alpha_{2})}\Id_{V_{\mu}}$$
where $\alpha=(\alpha_{1}, \alpha_{2})=(\mu_{1}-\ell+1, \mu_{2}+\frac{\ell}{2})$.
\end{Lem}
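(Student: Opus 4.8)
The plan is to reduce the lemma to the single scalar computation that already appears inside the proof of Proposition~\ref{twist}. Since $V_\mu$ is simple we have $\End_{\mathscr{C}^H}(V_\mu)\cong\mathbb{C}$, and the twist $\theta_{V_\mu}$ is by construction an element of $\End_{\mathscr{C}^H}(V_\mu)$; hence $\theta_{V_\mu}=c\,\Id_{V_\mu}$ for some $c\in\mathbb{C}^\times$, and it is enough to evaluate $\theta_{V_\mu}$ on one nonzero vector. I would use the highest weight vector $w_{0,0,0}$.

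Next I would recall that the chain of maps $\theta_{V}=(\Id_{V}\otimes\tev_{V})(c_{V,V}\otimes\Id_{V^{*}})(\Id_{V}\otimes\coev_{V})=Y_1Y_2Y_3$, evaluated on $w_{0,0,0}$ as in the proof of Proposition~\ref{twist}, uses only three facts: that $e_1w_{0,0,0}=e_2w_{0,0,0}=0$ (so $e_3w_{0,0,0}=0$ as well, which forces $\check{\mathcal{R}}$ to act as the identity on $w_{0,0,0}\otimes(-)$), the eigenvalues of $h_1,h_2$ on the weight vectors, and the eigenvalue $\xi^{-\ell\mu_1-2\mu_2}$ of $\phi_0=k_1^{-\ell}k_2^{-2}$ on $w_{0,0,0}$. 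All of these depend only on $\mu$ and not on whether $V_\mu$ is typical, so the same computation is valid for every simple highest weight module $V_\mu$ with $\mu\in\mathbb{C}\times\mathbb{C}$ and gives
$$\theta_{V_\mu}(w_{0,0,0})=\xi^{-\mu_1(\mu_2+\ell)-\mu_2(\mu_1+2\mu_2+2)}w_{0,0,0}.$$
Expanding, $-\mu_1(\mu_2+\ell)-\mu_2(\mu_1+2\mu_2+2)=-\ell\mu_1-2\mu_2(1+\mu_1+\mu_2)$, which is the first asserted formula.

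For the reformulation in terms of $\alpha$, I would substitute $\mu_1=\alpha_1+\ell-1$ and $\mu_2=\alpha_2-\frac{\ell}{2}$ into the exponent; a direct expansion yields $\ell\mu_1+2\mu_2(1+\mu_1+\mu_2)=\frac{\ell^2}{2}-\ell+2(\alpha_2^2+\alpha_1\alpha_2)$, so that $\theta_{V_\mu}=\xi^{-\ell^2/2}\,\xi^{\ell}\,\xi^{-2(\alpha_2^2+\alpha_1\alpha_2)}\Id_{V_\mu}$. Since $\xi=\exp(2\pi i/\ell)$ we have $\xi^{\ell}=1$ and $\xi^{-\ell^2/2}=\exp(-\pi i\ell)=(-1)^\ell=-1$ because $\ell$ is odd, which produces the global sign and gives $\theta_{V_\mu}=-\xi^{-2(\alpha_2^2+\alpha_1\alpha_2)}\Id_{V_\mu}$.

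All steps are elementary; the two points that deserve a little attention are the remark that the computation in Proposition~\ref{twist} transfers unchanged to non-typical simple modules (because only the highest weight vector and the weight data enter the scalar), and the careful extraction of the factor $-1$ from $\xi^{-\ell^2/2}$, which is precisely where the oddness of $\ell$ is used and where the unexpected sign in the statement comes from.
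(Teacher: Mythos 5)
Your proposal is correct and follows the same route as the paper: Lemma~\ref{kihieu alpha} is proved there by a one-sentence reference to the computation of $Y_1Y_2Y_3(w_{0,0,0})$ in the proof of Proposition~\ref{twist}, which is precisely what you invoke. The two refinements you add---that the scalar depends only on the highest weight data (so the argument transfers verbatim to atypical simple modules, which the paper leaves implicit) and the explicit extraction of the sign via $\xi^{-\ell^2/2}=\exp(-\pi i\ell)=(-1)^{\ell}=-1$ for $\ell$ odd in the $\alpha$-reformulation---are both correct and make explicit what the paper glosses over.
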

	\begin{proof}
	By the proof of Proposition \ref{twist}, $\theta_{V_{\mu_{1},\mu_{2}}}=\xi^{-\ell\mu_1-2\mu_2(1+\mu_1+\mu_2)}\Id_{V_{\mu}}$. 
	\end{proof}
	
	The category $\mathscr{C}^{H}$ is a braided pivotal category with a twist, i.e. $\mathscr{C}^{H}$ is a ribbon category.

	Let $\mathcal{T}$ be the ribbon category  of $\mathscr{C}$-colored oriented ribbon graphs in the sense of Turaev \cite{Tura94}. The set of morphisms $\mathcal{T}(((V_1,\pm),..., (V_n,\pm)),\\ ((W_1,\pm),...,(W_n,\pm)))$ is a space of linear combinations of $\mathscr{C}$-colored ribbon graphs. The ribbon Reshetikhin-Turaev  functor $F:\mathcal{T} \rightarrow \mathscr{C}^{H}$ is defined by the Penrose graphical calculus. 
	\begin{Def}
If $T \in \mathcal{T}((V_{\mu},+), (V_{\mu},+))$ where $V_{\mu}$ is a simple weight module of $\mathcal{U}_{\xi}^{H}\mathfrak{sl}(2|1)$, then $F(T)=x.\Id_{V_{\mu}}\in \End_{\mathcal{U}_{\xi}^{H}\mathfrak{sl}(2|1)}(V_{\mu})$ for $x\in \mathbb{C}$. We define the bracket of $T$ by $\langle T \rangle=x$.
For example, if $V_{\mu},V_{\mu^{'}} \in \mathscr{C}^{H}$, we define $S'(V_{\mu},V_{\mu^{'}}) =\left< \epsh{fig4}{7ex}
      \put(-8,-7){\ms{V_{\mu}}}
			\put(-11,21){\ms{V_{\mu^{'}}}} \right>.$
	\end{Def}

	We write $S'(\mu, \mu^{'})$ for $S'(V_{\mu},V_{\mu^{'}})$.

Another example is the bracket of the twist $\left<\epsh{fig16}{7ex}
      \put(-6,-18){\ms{V_{\mu}}}\right>
=-\xi^{-2(\alpha_{2}^{2}+\alpha_{1}\alpha_{2})}$, $(\alpha_{1}, \alpha_{2})=(\mu_{1}-\ell+1, \mu_{2}+\frac{\ell}{2})$.

\begin{Pro} \label{tinh S'}
Let $V=V_{\mu}$ be a typical module, $V^{'}=V_{\mu^{'}}$ be a simple module, then
$$S^{'}(\mu, \mu^{'})=\xi^{-4\alpha_{2}\alpha_{2}^{'}-2(\alpha_{2}\alpha_{1}^{'}+\alpha_{1}\alpha_{2}^{'})} \frac{\{\ell\alpha_1^{'}\}\{\alpha_2^{'}\}\{\alpha_2^{'}+\alpha_1^{'}\}}{\{\alpha_1^{'}\}}$$
where $\alpha=(\alpha_{1}, \alpha_{2})=(\mu_{1}-\ell+1, \mu_{2}+\frac{\ell}{2}), \alpha^{'}=(\alpha_{1}^{'}, \alpha_{2}^{'})=(\mu_{1}^{'}-\ell+1, \mu_{2}^{'}+\frac{\ell}{2})$.
\end{Pro}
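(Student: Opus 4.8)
The plan is to compute the bracket $S'(\mu,\mu')$ directly from the definition as the closed diagram obtained by encircling a $V_{\mu}$-colored strand with a $V_{\mu'}$-colored loop, evaluating the Reshetikhin--Turaev functor $F$ on it. Since $V=V_{\mu}$ is typical and simple, $\End_{\mathscr{C}^H}(V_{\mu})\cong\mathbb{C}$, so $F$ of this diagram is a scalar times $\Id_{V_\mu}$, and it suffices to track the action on a single convenient weight vector of $V_\mu$. I would use the highest weight vector $w_{0,0,0}$ (or the lowest weight vector $w_{\infty}=w_{1,1,\ell-1}$, mirroring the choice made in the proof of Proposition~\ref{twist}), because on such an extremal vector the truncated $R$-matrix $\check{\mathcal R}$ collapses dramatically: many of the $e_1^i\otimes f_1^i$, $e_3^j\otimes f_3^j$, $e_2^k\otimes f_2^k$ terms annihilate the extremal vector, leaving only finitely many surviving summands with explicitly computable coefficients coming from the $\mathcal K$-part, which is diagonal on weight vectors.

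The key steps, in order: (1) Write the encircling diagram as a partial trace: morally $S'(\mu,\mu')\Id_{V_\mu}=(\Id_{V_\mu}\otimes\tev_{V_{\mu'}})\circ(c_{V_{\mu'},V_\mu}\circ c_{V_\mu,V_{\mu'}}\otimes\Id_{V_{\mu'}^*})\circ(\Id_{V_\mu}\otimes\coev_{V_{\mu'}})$, i.e. a right partial trace over $V_{\mu'}$ of the double braiding $c_{V_{\mu'},V_\mu}c_{V_\mu,V_{\mu'}}$. (2) Using $\mathcal R=\check{\mathcal R}\mathcal K$ and the fact that $\mathcal K$ is diagonal on $w_{0,0,0}\otimes w_{\rho,\sigma,p}$ with the eigenvalue already recorded in the proof of Proposition~\ref{twist}, reduce the computation to summing, over the PBW basis $\{w_{\rho,\sigma,p}\}$ of $V_{\mu'}$, a diagonal contribution (the $\mathcal K\mathcal K$ factor of the double braiding) times a correction from the $\check{\mathcal R}\check{\mathcal R}$ part; on the extremal vector of $V_\mu$ the $\check{\mathcal R}$ acting first is trivial (its $e$-parts kill $w_{0,0,0}$ except the identity term), so only the second $\check{\mathcal R}$ genuinely acts and I keep just the terms whose $f$-parts on $V_{\mu'}$ land back in the same weight after the $\tev$ pairing. (3) Assemble the surviving sum; it should factor as a ``quantum-dimension-like'' sum over $p\in\{0,\dots,\ell-1\}$ of a geometric-type series in $\xi$ times the $\rho,\sigma\in\{0,1\}$ contributions, which by the standard root-of-unity identity $\sum_{p=0}^{\ell-1}x^p=\frac{1-x^\ell}{1-x}$ (cf.\ the character formula \eqref{dac so sl21}) collapses to the closed form $\frac{\{\ell\alpha_1'\}\{\alpha_2'\}\{\alpha_2'+\alpha_1'\}}{\{\alpha_1'\}}$, with the prefactor $\xi^{-4\alpha_2\alpha_2'-2(\alpha_2\alpha_1'+\alpha_1\alpha_2')}$ emerging from collecting the $\mathcal K$-eigenvalues after the substitution $\alpha=(\mu_1-\ell+1,\mu_2+\tfrac{\ell}{2})$, $\alpha'=(\mu_1'-\ell+1,\mu_2'+\tfrac{\ell}{2})$. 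A useful sanity check along the way: the character identity \eqref{dac so sl21} shows $\chi_{V_{\mu'}}$ already has the shape $X_1^{\mu_1'}X_2^{\mu_2'}\frac{1-x^\ell}{1-x}(1+X_1)(1+X_1x)$, so the three $\{\cdot\}$ factors in the numerator correspond respectively to the $\frac{1-x^\ell}{1-x}$ factor and the two binomials $(1+X_1)$, $(1+X_1x)$, which is reassuring.

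The main obstacle I anticipate is bookkeeping the signs and the precise exponents: the super-structure means $\tau$ introduces $(-1)^{\deg v\deg w}$ factors and the duality morphisms $\tev_{V_{\mu'}}$, $\tcoev_{V_{\mu'}}$ carry the pivotal element $\phi_0=k_1^{-\ell}k_2^{-2}$ together with extra signs $(-1)^{\deg e_i}$, so the scalar $\xi^{\ell\mu_1'+2\mu_2'+\cdots}$ coming from $\phi_0^{\pm1}$ on $w_{\rho,\sigma,p}$ must be combined very carefully with the $\mathcal K$-eigenvalues and the coefficients $[p][\mu_1'-p+1]$, etc., from the module action; getting the final exponent to be exactly $-4\alpha_2\alpha_2'-2(\alpha_2\alpha_1'+\alpha_1\alpha_2')$ and the overall sign right is where the real care is needed. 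A secondary point is justifying that only finitely many $\check{\mathcal R}$-terms contribute and that the resulting finite sum genuinely telescopes/sums to the stated product; here I would organize the sum by first doing the $\rho,\sigma$ sum (four terms, yielding the $(1+\cdot)(1+\cdot)$-type factors) and then the $p$-sum via the root-of-unity geometric series, exactly paralleling how \eqref{dac so sl21} was derived from the $\mathfrak{gl}(2)$ characters.
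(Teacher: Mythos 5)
Your overall mechanism is the same one the paper uses, but you have the two strands of the Hopf link swapped, and this is not a cosmetic issue. In the paper's definition (and in its proof), the open strand of the diagram is colored by $V_{\mu'}$ and the \emph{encircling loop} is colored by the typical module $V_\mu$; the computation is
$$S'(\mu,\mu')\,\Id_{V_{\mu'}}=(\Id_{V'}\otimes\tev_{V})(c_{V,V'}\otimes\Id_{V^{*}})(c_{V',V}\otimes\Id_{V^{*}})(\Id_{V'}\otimes\coev_{V}),$$
a right partial trace over $V=V_\mu$, evaluated on the highest weight vector $w'_{0,0,0}$ of $V_{\mu'}$. You instead write $S'(\mu,\mu')\Id_{V_\mu}$ as a right partial trace over $V_{\mu'}$, evaluated on an extremal vector of $V_\mu$. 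Because $\mathcal K$ is diagonal with eigenvalues that are bilinear in the two weights, the module whose weight appears as a fixed parameter (the one not being summed) is the one that controls the geometric sums; the result of the paper's computation therefore carries the factor $\frac{\{\ell\alpha_1'\}\{\alpha_2'\}\{\alpha_1'+\alpha_2'\}}{\{\alpha_1'\}}$ in $\alpha'$, whereas your computation would produce the analogous factor in $\alpha$ — i.e.\ you would be computing $S'(\mu',\mu)$, not $S'(\mu,\mu')$. These are genuinely different: by the symmetry in Definition \ref{dn d}, $d(\mu')S'(\mu,\mu')=d(\mu)S'(\mu',\mu)$, so they differ by the ratio of modified dimensions, and in particular the character-type factor in the numerator must involve $\alpha'$, not $\alpha$.

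There is a secondary consequence of the same swap. The proposition only assumes $V_{\mu'}$ is simple, not typical, so it need not have the $4\ell$-dimensional PBW basis $\{w'_{\rho,\sigma,p}\}$ that your partial trace over $V_{\mu'}$ would require; the paper deliberately traces over the typical $V_\mu$, where the full $\{w_{\rho,\sigma,p}\}$ basis is available and the $\rho,\sigma,p$ sum makes sense. Once the roles of the two modules are corrected — trace over $V_\mu$, act on the highest weight vector of $V_{\mu'}$, observe that the first $\check{\mathcal R}$ is trivial on $w'_{0,0,0}$ and that the cross-terms of the second $\check{\mathcal R}$ are killed by weight considerations after the $\tev_{V}$ pairing — the rest of your plan (collecting the $\mathcal K$-eigenvalues, the $\phi_0$-contribution from $\tev_V$, and summing the geometric series in $\rho,\sigma\in\{0,1\}$, $p\in\{0,\dots,\ell-1\}$) does reproduce the paper's argument, and your observation that the three numerator factors mirror the three factors in the character formula \eqref{dac so sl21} is the right way to keep track of the final product.
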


\begin{proof}
Let $S=S(\mu, \mu^{'}) \in \End_{\mathbb{C}}(V_{\mu_1^{'},\mu_2^{'}})$ be the endomorphism determined by the diagram $\epsh{fig4}{7ex}
      \put(-8,-7){\ms{V_{\mu}}}
			\put(-11,21){\ms{V_{\mu^{'}}}}\ .$ 
We have 
\begin{align*}
S'(\mu, \mu^{'})\Id_{V_{\mu^{'}}} &=(\Id_{V^{'}}\otimes \tev_{V})(c_{V,V^{'}}\otimes \Id_{V^{*}})(c_{V^{'},V}\otimes \Id_{V^{*}})(\Id_{V^{'}}\otimes \coev_{V})\\
		&=X_{1}X_{2}X_{3}X_{4}.
\end{align*} 
The definition gives us \\ $X_{4}(w_{0,0,0}^{'})=\sum_{\rho, \sigma, p}w_{0,0,0}^{'} \otimes w_{\rho, \sigma, p} \otimes w_{\rho, \sigma, p}^{*}$ and \\
$X_{3}X_{4}(w_{0,0,0}^{'})=\sum_{\rho, \sigma, p}(\tau \circ \mathcal{R})(w_{0,0,0}^{'} \otimes w_{\rho, \sigma, p})\otimes w_{\rho, \sigma, p}^{*}$.\\
$\mathcal{K}(w_{0,0,0}^{'} \otimes w_{\rho, \sigma, p})=\xi^{-\mu_{1}^{'}(\mu_{2}+\sigma +p)-\mu_{2}^{'}(\mu_{1}+\rho-\sigma -2p)-2\mu_{2}^{'}(\mu_{2}+\sigma +p)}w_{0,0,0}^{'} \otimes w_{\rho, \sigma, p}$.\\
$\mathcal{R}(w_{0,0,0}^{'} \otimes w_{\rho, \sigma, p})=\xi^{-\mu_{1}^{'}(\mu_{2}+\sigma +p)-\mu_{2}^{'}(\mu_{1}+\rho-\sigma -2p)-2\mu_{2}^{'}(\mu_{2}+\sigma +p)}w_{0,0,0}^{'} \otimes w_{\rho, \sigma, p}$.\\
So $X_{3}X_{4}(w_{0,0,0}^{'})=\sum_{\rho, \sigma, p}\xi^{-\mu_{1}^{'}(\mu_{2}+\sigma +p)-\mu_{2}^{'}(\mu_{1}+\rho-\sigma -2p)-2\mu_{2}^{'}(\mu_{2}+\sigma +p)}w_{\rho, \sigma, p} \otimes w_{0,0,0}^{'} \otimes w_{\rho, \sigma, p}^{*}$.\\
$X_{2}X_{3}X_{4}(w_{0,0,0}^{'})=\sum_{\rho, \sigma, p}\xi^{-\mu_{1}^{'}(\mu_{2}+\sigma +p)-\mu_{2}^{'}(\mu_{1}+\rho-\sigma -2p)-2\mu_{2}^{'}(\mu_{2}+\sigma +p)}(\tau \circ \mathcal{R})(w_{\rho, \sigma, p} \otimes w_{0,0,0}^{'}) \otimes w_{\rho, \sigma, p}^{*}$.\\
	Furthermore, the element $(\mathcal{\check{R}}-1)(w_{\rho, \sigma, p} \otimes w_{0,0,0}^{'}) \in V_{\mu_1, \mu_2} \otimes V_{\mu_{1}^{'},\mu_{2}^{'}}$ is a sum of vectors of the form $v^{'}\otimes w^{'}$ where $w^{'}$ is a weight vector of $V_{\mu_{1}^{'},\mu_{2}^{'}}$ and $v^{'}$ is a weight vector of $V_{\mu_1, \mu_2}$ which has a higher weight than $w_{\rho, \sigma, p}$.\\
	$X_{2}X_{3}X_{4}(w_{0,0,0}^{'})=\sum_{\rho, \sigma, p}(\xi^{-\mu_{1}^{'}(\mu_{2}+\sigma +p)-\mu_{2}^{'}(\mu_{1}+\rho-\sigma -2p)-2\mu_{2}^{'}(\mu_{2}+\sigma +p)}w_{0,0,0}^{'} \otimes w_{\rho, \sigma, p}\otimes w_{\rho, \sigma, p}^{*}+\sum_{k}w_{k}^{'}\otimes v_{k}^{'} \otimes z_{k}).$
	\begin{align*}
	&X_{1}X_{2}X_{3}X_{4}(w_{0,0,0}^{'})\\
	&=\sum_{\rho, \sigma, p}\xi^{-\mu_{1}^{'}(\mu_{2}+\sigma +p)-\mu_{2}^{'}(\mu_{1}+\rho-\sigma -2p)-2\mu_{2}^{'}(\mu_{2}+\sigma +p)}w_{0,0,0}^{'} \otimes (-1)^{\rho+\sigma}w_{\rho, \sigma, p}^{*}(\phi_{0} w_{\rho, \sigma, p})\\
	&=\sum_{\rho, \sigma, p}\xi^{-\mu_{1}^{'}(\mu_{2}+\sigma +p)-\mu_{2}^{'}(\mu_{1}+\rho-\sigma -2p)-2\mu_{2}^{'}(\mu_{2}+\sigma +p)-\ell\mu_{1}-2(\mu_{2}+\sigma+p)}w_{0,0,0}^{'} \\
	&=\xi^{-(2\mu_2+\mu_1+1)(2\mu_2^{'}+\mu_1^{'}+1)+(\mu_1+1)(\mu_1^{'}+1)-\ell(\mu_1^{'}+\mu_1+1)} \frac{\{\ell(\mu_1^{'}+1)\}\{\mu_2^{'}\}\{\mu_2^{'}+\mu_1^{'}+1\}}{\{\mu_1^{'}+1\}}\\
	\ &w_{0,0,0}^{'}\\
	&=\xi^{-4\alpha_{2}\alpha_{2}^{'}-2(\alpha_{2}\alpha_{1}^{'}+\alpha_{1}\alpha_{2}^{'})}\frac{\{\ell\alpha_1^{'}\}\{\alpha_2^{'}\}\{\alpha_2^{'}+\alpha_1^{'}\}}{\{\alpha_1^{'}\}}w_{0,0,0}^{'}.
	\end{align*}

By the definition $S(\mu, \mu^{'})(w_{0,0,0}^{'} )=S^{'}(\mu, \mu^{'})w_{0,0,0}^{'} $, we deduce the proposition. 
\end{proof}

\begin{Def} \label{dn d}
If $\mu =(\mu_1,\mu_2)\in \left(\mathbb{C} \backslash \frac{1}{2}\mathbb{Z} \cup (-1+\frac{\ell}{2}\mathbb{Z})\right) \times \mathbb{C}\backslash \frac{\ell}{2}\mathbb{Z}$ and $\mu_{2}+\mu_{1}+1 \in \mathbb{C}\backslash \frac{\ell}{2}\mathbb{Z}$, we define $$d(\mu)=\frac{\{\mu_1+1\}}{\ell \{\ell \mu_{1}\}\{\mu_2\}\{\mu_2+\mu_1+1\}}=\frac{\{\alpha_{1}\}}{\ell \{\ell \alpha_{1}\}\{\alpha_{2}\}\{\alpha_{1}+\alpha_{2}\}},$$
so there is a symmetry
$$d(\mu^{'})S'(\mu, \mu^{'})=d(\mu)S'(\mu^{'}, \mu).$$ 
\end{Def}
%%%%%%%%%%%%%%%%%%%%%%%%%%%%%%%%%%%%%%%%%%%%%%%%%%%%%%%%%%%%%%%%%%%%%%%%%%%%%%%%%%%%%%%%%%%%%%%%%%%%%%%%%%%%%%%%%%%%%
\subsubsection{Semi-simplicity of category $\mathscr{C}^{H}$}
	
Remember that $\mathit{G}=\mathbb{C} / \mathbb{Z} \times \mathbb{C} / \mathbb{Z}$ and $\mathit{G}_{s}=\{\overline{g} \in \mathit{G} \ \textit{such that} \  \exists \ V \in \mathscr{C}_{g}^{H}\ \textit{simple of}\ \mathscr{C}^{H} \ \textit{and atypical} \}$.
\begin{Lem}
If $\mathscr{C}_{\overline{\mu}}^{H}$ is semi-simple, then a module $V \in \mathscr{C}_{\overline{\mu}}^{H}$ is determined up to an isomorphism and parity by its character.
\end{Lem}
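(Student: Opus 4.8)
The plan is to reduce everything to linear independence of the characters of the simple objects of $\mathscr{C}_{\overline{\mu}}^{H}$. Since this category is semi-simple, any $V\in\mathscr{C}_{\overline{\mu}}^{H}$ splits as $V\cong\bigoplus_{S}S^{\oplus n_{S}(V)}$, the sum over isomorphism classes of simple objects and $n_{S}(V)\in\mathbb{Z}_{\geq 0}$ almost all zero, whence $\chi_{V}=\sum_{S}n_{S}(V)\,\chi_{S}$. As remarked after the definition of the character, $\chi$ counts dimensions, not super-dimensions, so $\chi_{\overline{S}}=\chi_{S}$ for every simple $S$; a character can never distinguish $S$ from its parity reversal $\overline{S}$. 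Regrouping the decomposition of $V$ along the pairs $\{S,\overline{S}\}$ and picking one representative $S$ in each pair, $\chi_{V}=\sum_{\{S,\overline{S}\}}\bigl(n_{S}(V)+n_{\overline{S}}(V)\bigr)\chi_{S}$. Hence, as soon as the family $\{\chi_{S}\}$ (one $S$ per pair) is $\mathbb{C}$-linearly independent, each total multiplicity $n_{S}(V)+n_{\overline{S}}(V)$ is read off $\chi_{V}$; two modules with the same character then have the same simple constituents with the same total multiplicity in each pair, the only remaining freedom being the distribution of these copies between $S$ and $\overline{S}$, i.e. they agree up to isomorphism and parity. So the statement follows from this linear independence.

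To prove it, I would use that every simple object $S$ of $\mathscr{C}^{H}$ is a highest weight module: being finite dimensional it contains a vector killed by $e_{1}$ and $e_{2}$, and by simplicity $S$ is generated by it, so $S$ has a one-dimensional highest weight space, of weight $\lambda_{S}$, and every weight of $S$ is $\leq\lambda_{S}$ for the dominance order (generated by $\mathbb{Z}_{\geq 0}$-combinations of the $h$-weights $(2,-1)$, $(-1,0)$, $(1,-1)$ of $e_{1}$, $e_{2}$, $e_{3}$). Thus $\lambda_{S}$ is the dominance-maximal element of the support of $\chi_{S}$, and distinct simple objects of the same degree have distinct highest weights. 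For a typical $S=V_{\lambda}$ this is transparent from \eqref{dac so sl21}: with $x=X_{1}^{-2}X_{2}$, the $X_{2}$-exponents in $\chi_{V_{\lambda}}=X_{1}^{\lambda_{1}}X_{2}^{\lambda_{2}}\frac{1-x^{\ell}}{1-x}(1+X_{1})(1+X_{1}x)$ lie in $\lambda_{2}+\{0,\dots,\ell\}$, and the component of minimal $X_{2}$-degree is $X_{1}^{\lambda_{1}}X_{2}^{\lambda_{2}}(1+X_{1})$, which recovers $\lambda=(\lambda_{1},\lambda_{2})$ unambiguously. Given this, linear independence is the usual triangularity argument: in a finite relation $\sum_{i}c_{i}\chi_{S_{i}}=0$ with the $S_{i}$ pairwise non-isomorphic and pairwise not parity reversals, pick $i$ with $\lambda_{S_{i}}$ maximal (for typical modules: with second coordinate, then first coordinate, smallest) among those with $c_{i}\neq 0$. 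Any other $S_{j}$ in the relation cannot have $\lambda_{S_{i}}$ in the support of $\chi_{S_{j}}$, for that would force $\lambda_{S_{i}}\leq\lambda_{S_{j}}$, hence equality by maximality, hence $S_{j}\cong S_{i}$ or $\overline{S_{i}}$. So the monomial $X_{1}^{\lambda_{S_{i},1}}X_{2}^{\lambda_{S_{i},2}}$ occurs (with coefficient $1$) only in $\chi_{S_{i}}$, forcing $c_{i}=0$, a contradiction; removing $S_{i}$ and repeating finishes the argument.

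The point that needs genuine care is exactly this triangularity, namely that inside one graded piece $\mathscr{C}_{\overline{\mu}}^{H}$ no accidental coincidence of weight supports blocks the isolation of an uncancelled leading monomial. For the typical modules at stake here (when $\mathscr{C}_{\overline{\mu}}^{H}$ is semi-simple one expects $\overline{\mu}\notin\mathit{G}_{s}$, so every simple is typical) it is immediate from the explicit form of \eqref{dac so sl21} above; in general it amounts to the statement that the highest weight classifies simple highest weight modules of a fixed degree up to parity, which is contained in the representation theory of \cite{BaDaMb97}. Everything else — the decomposition into simples, the identity $\chi_{S}=\chi_{\overline{S}}$, and extracting multiplicities from a linear combination of a linearly independent family — is formal.
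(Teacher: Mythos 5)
The paper states this lemma without proof, so there is no ``paper's own proof'' to compare against; what you have done is fill in the missing argument. Your proof is correct and takes what is essentially the only sensible route. Semi-simplicity reduces the claim to linear independence of the characters $\{\chi_{S}\}$ of the simple objects of $\mathscr{C}_{\overline{\mu}}^{H}$, taken one per parity pair $\{S,\overline{S}\}$ (and you correctly observe that $\chi$ sees dimensions, not super-dimensions, so $\chi_{S}=\chi_{\overline{S}}$, which is exactly why ``up to parity'' is necessary). You then isolate a leading monomial for each $\chi_{S}$. Two small points of care that you handle well: first, the ``dominance order'' generated by the weights $(2,-1),(-1,0),(1,-1)$ of $e_{1},e_{2},e_{3}$ is not as clean as in the ungraded Lie algebra case (one of them has a negative first coordinate), so the naive ``maximal weight'' argument needs adjusting --- your switch to ordering by smallest $X_{2}$-degree, then smallest $X_{1}$-degree, works precisely because the $f_{i}$-weights $(-2,1),(1,0),(-1,1)$ all have nonnegative second coordinate, so $\lambda_{S,2}$ really is the minimal $X_{2}$-exponent in $\chi_{S}$, and the degree-$\lambda_{S,2}$ slice $X_{1}^{\lambda_{S,1}}X_{2}^{\lambda_{S,2}}(1+X_{1})$ of \eqref{dac so sl21} recovers $\lambda_{S}$. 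Second, you need that distinct simples of the same degree have distinct highest weights (up to parity); this is the classification of \cite{BaDaMb97}, which the paper already relies on.

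The one place you leave a deferment is the case where $\mathscr{C}_{\overline{\mu}}^{H}$ is semi-simple but contains atypical simples, where \eqref{dac so sl21} no longer applies and the leading-monomial isolation would need the atypical character formulas from \cite{BaDaMb97}. This is a genuine gap in generality, but a harmless one for the paper: the only application of the lemma is in the proof of Theorem~\ref{dlnd}, where $\overline{\mu+\mu'}\notin\mathit{G}_{s}$, so every simple in play is typical and your explicit argument fully covers what is needed.
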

	
	The above lemma and the character of representation $V_{\mu_{1}, \mu_{2}} \otimes V_{\mu_{1}^{'}, \mu_{2}^{'}}$ gives us the following theorem.
\begin{The} \label{dlnd}
Let $V_{\mu}, V_{\mu^{'}}$ be two typical modules. If $\overline{\mu+\mu^{'}} \notin \mathit{G}_{s}$ then
$V_{\mu_1,\mu_2} \otimes V_{\mu_1^{'},\mu_2^{'}}=\oplus_{k=0}^{\ell-1}(V_{\mu_1+\mu_1^{'}-2k,\mu_2+\mu_2^{'}+k}\oplus \overline{V}_{\mu_1+\mu_1^{'}-2k+1,\mu_2+\mu_2^{'}+k} 
\oplus \overline{V}_{\mu_1+\mu_1^{'}-2k,\mu_2+\mu_2^{'}+k+1} \oplus V_{\mu_1+\mu_1^{'}-2k-1,\mu_2+\mu_2^{'}+k+1})$ \vspace{12pt} where $\overline{V}$ is the module $V$ with opposite parity.
\end{The}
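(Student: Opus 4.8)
The plan is to reduce the identity to a comparison of graded characters and then use semi-simplicity. First I would observe that $V_{\mu}\otimes V_{\mu^{'}}$ lies in $\mathscr{C}^{H}_{\overline{\mu+\mu^{'}}}$, which is semi-simple by Theorem \ref{dl2} (this is where the hypothesis $\overline{\mu+\mu^{'}}\notin\mathit{G}_{s}$ enters), and that, again because $\overline{\mu+\mu^{'}}\notin\mathit{G}_{s}$, every simple object of $\mathscr{C}^{H}_{\overline{\mu+\mu^{'}}}$ is typical, hence $4\ell$-dimensional and determined by its highest weight. By the Lemma preceding the theorem, a module of a semi-simple $\mathscr{C}^{H}_{\overline{g}}$ is determined up to isomorphism and parity by its character, so it suffices to check that the two sides have the same character and, separately, to fix the parities of the summands. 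A quick sanity check: $\dim(V_{\mu}\otimes V_{\mu^{'}})=(4\ell)^{2}=16\ell^{2}=4\cdot\ell\cdot 4\ell$, matching the right-hand side.

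For the characters, set $\nu=(\nu_{1},\nu_{2})=(\mu_{1}+\mu_{1}^{'},\mu_{2}+\mu_{2}^{'})$, $x=X_{1}^{-2}X_{2}$, and $P=\frac{1-x^{\ell}}{1-x}(1+X_{1})(1+X_{1}x)$, so that \eqref{dac so sl21} reads $\chi_{V_{a,b}}=X_{1}^{a}X_{2}^{b}P$. Relative to $X_{1}^{\nu_{1}}X_{2}^{\nu_{2}}$, the index-$k$ shift $(-2k,k)$ contributes $X_{1}^{-2k}X_{2}^{k}=x^{k}$, and the four weight shifts $(0,0),(1,0),(0,1),(-1,1)$ appearing in the theorem contribute $1,X_{1},X_{2},X_{1}^{-1}X_{2}$; since the character ignores the bars, summing over $k=0,\dots,\ell-1$ gives the total character of the proposed sum as
\[ X_{1}^{\nu_{1}}X_{2}^{\nu_{2}}\,P\Big(\sum_{k=0}^{\ell-1}x^{k}\Big)\big(1+X_{1}+X_{2}+X_{1}^{-1}X_{2}\big)=X_{1}^{\nu_{1}}X_{2}^{\nu_{2}}\,P\cdot\frac{1-x^{\ell}}{1-x}(1+X_{1})(1+X_{1}x)=X_{1}^{\nu_{1}}X_{2}^{\nu_{2}}P^{2}, \]
using $1+X_{1}+X_{2}+X_{1}^{-1}X_{2}=(1+X_{1})(1+X_{1}x)$ and the geometric sum $\sum_{k}x^{k}=\frac{1-x^{\ell}}{1-x}$. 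Since $\chi_{V_{\mu}\otimes V_{\mu^{'}}}=\chi_{V_{\mu}}\chi_{V_{\mu^{'}}}=(X_{1}^{\mu_{1}}X_{2}^{\mu_{2}}P)(X_{1}^{\mu_{1}^{'}}X_{2}^{\mu_{2}^{'}}P)=X_{1}^{\nu_{1}}X_{2}^{\nu_{2}}P^{2}$, the characters agree.

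To fix the parities I would repeat the computation with the super-character. A short computation with the basis $w_{\rho,\sigma,p}$, whose parity is $\rho+\sigma$, gives $\mathrm{sch}_{V_{a,b}}=X_{1}^{a}X_{2}^{b}\frac{1-x^{\ell}}{1-x}(1-X_{1})(1-X_{1}x)$; this is multiplicative under $\otimes$ and satisfies $\mathrm{sch}_{\overline{V}}=-\mathrm{sch}_{V}$, so it separates a simple module from its parity-reversal. Attaching to the four families the signs $+,-,-,+$ dictated by the $V/\overline{V}$ pattern of the statement replaces $1+X_{1}+X_{2}+X_{1}^{-1}X_{2}$ by $1-X_{1}-X_{2}+X_{1}^{-1}X_{2}=(1-X_{1})(1-X_{1}x)$, and the $k$-sum collapses to $X_{1}^{\nu_{1}}X_{2}^{\nu_{2}}\big(\frac{1-x^{\ell}}{1-x}(1-X_{1})(1-X_{1}x)\big)^{2}=\mathrm{sch}_{V_{\mu}}\mathrm{sch}_{V_{\mu^{'}}}=\mathrm{sch}_{V_{\mu}\otimes V_{\mu^{'}}}$. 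Knowing both the character and the super-character of $V_{\mu}\otimes V_{\mu^{'}}$ (equivalently, the graded dimensions of its even and odd weight spaces) then forces the isomorphism classes \emph{and} the parities of the summands. In passing one also checks that all the highest weights $(\nu_{1}-2k,\nu_{2}+k),(\nu_{1}-2k+1,\nu_{2}+k),(\nu_{1}-2k,\nu_{2}+k+1),(\nu_{1}-2k-1,\nu_{2}+k+1)$ lie in the coset $\overline{\mu+\mu^{'}}\notin\mathit{G}_{s}$, so that they satisfy the typicality conditions in the Remark after the definition of typical module and each summand is genuinely a $4\ell$-dimensional simple object.

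The character bookkeeping is routine once one notices the collapse $\sum_{k=0}^{\ell-1}x^{k}=\frac{1-x^{\ell}}{1-x}$ hidden in the factor $\frac{1-x^{\ell}}{1-x}$ of $P$. The real points are: (i) the logical dependence on Theorem \ref{dl2} — one must make sure its proof, which rests on the root-of-unity representation theory of \cite{BaDaMb97}, does not itself invoke this decomposition, so there is no circularity; and (ii) the parity bookkeeping, since distinct families can share the same underlying module after reindexing — for instance $\overline{V}_{\nu_{1}-2k+1,\nu_{2}+k}$ and the index-$(k-1)$ term $V_{\nu_{1}-2(k-1)-1,\nu_{2}+(k-1)+1}$ have the same highest weight — so the ordinary character cannot separate them and the super-character is genuinely needed.
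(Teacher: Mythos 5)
Your overall strategy — reduce to a character computation and invoke semi-simplicity of $\mathscr{C}^{H}_{\overline{\mu+\mu'}}$ via Theorem \ref{dl2} and the preceding lemma, then fix the parities separately — is exactly the route the paper takes: the paper multiplies the two characters from \eqref{dac so sl21}, rewrites the second factor as $\sum_{k}x^{k}(1+X_{1}+X_{2}+X_{1}^{-1}X_{2})$, reads off the highest weights, and closes with ``the analysis of parity of highest weight vectors can be used to conclude.'' Your character bookkeeping for the ordinary character is identical in content, your circularity worry is legitimate but resolved (the proof of Theorem \ref{dl2} uses the central elements $C_{p}$ and a density argument, not this decomposition), and your idea of replacing the paper's informal parity sentence by a super-character computation is a sensible way to make that step explicit.

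However, your super-character step contains an algebraic error that deserves a second look. You assert $1-X_{1}-X_{2}+X_{1}^{-1}X_{2}=(1-X_{1})(1-X_{1}x)$, but with $x=X_{1}^{-2}X_{2}$ one has $X_{1}x=X_{1}^{-1}X_{2}$ and hence
\[
(1-X_{1})(1-X_{1}x)=(1-X_{1})(1-X_{1}^{-1}X_{2})=1-X_{1}+X_{2}-X_{1}^{-1}X_{2},
\]
with the signs on $X_{2}$ and $X_{1}^{-1}X_{2}$ opposite to what you wrote. Your formula $\mathrm{sch}_{V_{a,b}}=X_{1}^{a}X_{2}^{b}\tfrac{1-x^{\ell}}{1-x}(1-X_{1})(1-X_{1}x)$ is itself correct, so multiplicativity gives the LHS super-character a factor $(1-X_{1})(1-X_{1}x)$, not $(1-X_{1})(1+X_{1}x)$; matching this against the $\sum_{k}x^{k}$-collapse on the RHS forces the sign pattern $+,-,+,-$ on the shifts $(0,0),(1,0),(0,1),(-1,1)$, i.e. bars on the second and \emph{fourth} summands, not the second and third. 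You can confirm this directly from the weight spaces of $V_{\mu}\otimes V_{\mu'}$: every vector of weight $(\nu_{1},\nu_{2}+1)$ is a tensor of total parity $0$ (e.g. $w_{1,1,0}\otimes w'_{0,0,0}$, $w_{0,1,0}\otimes w'_{1,0,0}$), and every vector of weight $(\nu_{1}-1,\nu_{2}+1)$ has total parity $1$ (e.g. $w_{0,1,0}\otimes w'_{0,0,0}$, $w_{0,0,0}\otimes w'_{0,1,0}$), so whichever highest-weight vectors live there are forced to be even and odd respectively. The net effect is that your ``verification'' of the stated parities rests on a false factorization; a correct super-character comparison yields the opposite parities on the third and fourth terms, which you should reconcile against the theorem as printed rather than silently absorb into the bookkeeping.
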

\begin{proof}
According to the formula (\ref{dac so sl21}), we have 
\begin{align*}
&\chi_{V_{\mu_1, \mu_2} \otimes V_{\mu_{1}^{'}, \mu_{2}^{'}}}=\chi_{V_{\mu_1, \mu_2}} \chi_{V_{\mu_{1}^{'}, \mu_{2}^{'}}}\\
&= X_1^{\mu_1+\mu_{1}^{'}}X_2^{\mu_2 + \mu_{2}^{'}}\frac{1-x^{\ell}}{1-x}(1+X_1)(1+X_{1}x)\sum_{k=0}^{\ell-1}{(X_{1}^{-2}X_{2})^{k}}(1+X_{1}+X_{2}+X_1^{-1}X_2) \\
&=\frac{1-x^{\ell}}{1-x}(1+X_1)(1+X_{1}x)
\sum_{k=0}^{\ell-1}{X_1^{\mu_1+\mu_{1}^{'}-2k}X_2^{\mu_2 + \mu_{2}^{'}+k}}+X_1^{\mu_1+\mu_{1}^{'}-2k+1}X_2^{\mu_2 + \mu_{2}^{'}+k} \\
&+X_1^{\mu_1+\mu_{1}^{'}-2k}X_2^{\mu_2 + \mu_{2}^{'}+k+1}+X_1^{\mu_1+\mu_{1}^{'}-2k-1}X_2^{\mu_2 + \mu_{2}^{'}+k+1}.
\end{align*}
The analysis of parity of highest weight vectors can be used to conclude.
\end{proof}
\begin{rmq} Not all terms in the decomposition of the above theorem are distinct.
\end{rmq}
We defined a graduation $\mathscr{C}^{H}= \bigoplus_{\overline{\mu}\in \mathit{G}}\mathscr{C}_{\overline{\mu}}^{H}$. Let \texttt{Proj} be the subcategory of $\mathscr{C}^{H}$ containing projective modules, \texttt{Proj} is an ideal (see \cite{NgJkBp11}), i.e. \texttt{Proj} is closed under retracts and absorbent to the tensor product. We have the following proposition.
\begin{Pro} 
For $\overline{\mu}\in \mathit{G}$, the three conditions below are equivalent
\begin{enumerate}
	\item All the simple $\mathcal{U}_\xi\mathfrak{sl}(2|1)$-modules of $\mathscr{C}_{\overline{\mu}}$ are projective.
	\item The category $\mathscr{C}_{\overline{\mu}}$ is semi-simple.
	\item The $\mathbb{C}$-superalgebra of finite dimension $\mathcal{U}/(k_{1}^{\ell}-\xi^{\ell\overline{\mu_{1}}}, k_{2}^{\ell}-\xi^{\ell\overline{\mu_{2}}})$ is semi-simple where $\mathcal{U}=\mathcal{U}_\xi\mathfrak{sl}(2|1)/(e_{1}^{\ell},f_{1}^{\ell})$.
\end{enumerate}
\end{Pro}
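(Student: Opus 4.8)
The plan is to identify the block $\mathscr{C}_{\overline\mu}$ with the module category of the finite-dimensional superalgebra $A:=\mathcal{U}/(k_1^\ell-\xi^{\ell\overline{\mu_1}},k_2^\ell-\xi^{\ell\overline{\mu_2}})$ of statement $(3)$, and then invoke the classical relation between semisimplicity of a finite-dimensional algebra and projectivity of its simple modules. First I would set up this identification. Since $\xi^\ell=1$, a check on the generators shows that $k_1^\ell$ and $k_2^\ell$ are central in $\mathcal{U}=\mathcal{U}_\xi\mathfrak{sl}(2|1)/(e_1^\ell,f_1^\ell)$ (for instance $k_2^\ell e_1 k_2^{-\ell}=\xi^{-\ell}e_1=e_1$, $k_2^\ell e_2 k_2^{-\ell}=e_2$, and likewise for $k_1^\ell$ and for the $f_j$), so $A$ is a well-defined $\mathbb{C}$-superalgebra, and from the Poincaré--Birkhoff--Witt basis it is finite-dimensional (of dimension $16\ell^4$). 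Moreover in $A$ each $k_i$ satisfies $k_i^\ell=\xi^{\ell\overline{\mu_i}}\neq 0$, so its minimal polynomial divides $X^\ell-\xi^{\ell\overline{\mu_i}}$, which has simple roots; hence $k_1,k_2$ are automatically diagonalizable on every $A$-module. Because a morphism in $\mathscr{C}$ preserves weights, $k_1^\ell,k_2^\ell$ act by scalars on an indecomposable object, so $\mathscr{C}_{\overline\mu}$ is precisely the category of finite-dimensional super-$A$-modules with even morphisms; and since the blocks of $\mathscr{C}$ are pairwise orthogonal, an object of $\mathscr{C}_{\overline\mu}$ is projective in $\mathscr{C}$ if and only if it is projective in this category.

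With this in hand, $(2)\Leftrightarrow(3)$ is immediate: the parity automorphism of $A$ preserves the Jacobson radical $J(A)$, so $J(A)$ is a graded ideal, and the category of finite-dimensional super-$A$-modules is semisimple if and only if $J(A)=0$, i.e. if and only if $A$ is a semisimple (super)algebra; combined with the identification of $\mathscr{C}_{\overline\mu}$ from the first paragraph this gives $(2)\Leftrightarrow(3)$. The implication $(2)\Rightarrow(1)$ is also clear, since in a semisimple abelian category every object, in particular every simple one, is projective.

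It remains to prove $(1)\Rightarrow(3)$. Assume all simple objects of $\mathscr{C}_{\overline\mu}$ are projective. The quotient $A/J(A)$ is a finite direct sum of simple $A$-supermodules, hence projective in $\mathscr{C}_{\overline\mu}$ by hypothesis; therefore the canonical surjection $A\twoheadrightarrow A/J(A)$ splits as a map of left $A$-supermodules, yielding a decomposition $A\cong P\oplus J(A)$ of homogeneous left $A$-submodules with $P\cong A/J(A)$. Since $J(A)$ acts as zero on $A/J(A)$, one has $J(A)\cdot P=0$, whence
\[ J(A)=J(A)\cdot A=J(A)\cdot P+J(A)\cdot J(A)=J(A)^2 ; \]
as $J(A)$ is nilpotent ($A$ being finite-dimensional), this forces $J(A)=0$, which is $(3)$. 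Together with $(3)\Rightarrow(2)$ and $(2)\Rightarrow(1)$ this closes the cycle, proving the equivalence.

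The only content beyond standard module theory is the reduction carried out in the first paragraph — verifying centrality of $k_1^\ell,k_2^\ell$, cutting out the block $\mathscr{C}_{\overline\mu}$ as the finite-dimensional quotient $A$, and noting that diagonalizability of $k_1,k_2$ comes for free there. I do not anticipate a genuine obstacle: once the block is realized as $A$-modules, the statement is the familiar fact that a finite-dimensional algebra is semisimple iff all its simple modules are projective, and the $\mathbb{Z}/2$-grading causes no trouble precisely because $J(A)$ is automatically a graded ideal, so all the splittings above may be chosen homogeneous.
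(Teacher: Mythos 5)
Your proof is correct and follows the same route as the paper, which simply observes that $\mathscr{C}_{\overline\mu}$ is the module category of the finite-dimensional superalgebra $A=\mathcal{U}/(k_1^\ell-\xi^{\ell\overline{\mu_1}},k_2^\ell-\xi^{\ell\overline{\mu_2}})$ and then declares the equivalence ``classic.'' You have merely spelled out the details the paper leaves implicit: the centrality of $k_i^\ell$, the automatic diagonalizability of $k_i$ in $A$, the standard Jacobson-radical argument for a finite-dimensional algebra, and the observation that $J(A)$ is a graded ideal so the super structure creates no obstruction.
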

\begin{proof}
The equivalence is classic knowing that $\mathscr{C}_{\overline{\mu}}$ is also a category of the $\mathcal{U}/(k_{1}^{\ell}-\xi^{\ell\overline{\mu_{1}}}, k_{2}^{\ell}-\xi^{\ell\overline{\mu_{2}}})$-modules. 
\end{proof}

%%------------------
\begin{The} \label{dl2}
\begin{enumerate}
	\item If $\overline{\mu}\in \mathit{G} \backslash \mathit{G}_{s}$ then $\mathscr{C}_{\overline{\mu}}^{H}$ is semi-simple. 
	\item A typical $\mathcal{U}_{\xi}^{H}\mathfrak{sl}(2|1)$-module is projective.
\end{enumerate}
\end{The}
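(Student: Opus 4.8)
The plan is to establish the two statements by essentially independent arguments, (1) being a dimension count and (2) the substantial part.

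\emph{Part (1).} Fix $\overline{\mu}\in\mathit{G}\setminus\mathit{G}_{s}$. A weight module in $\mathscr{C}^{H}_{\overline{\mu}}$ becomes, after forgetting the action of $h_{1},h_{2}$, a module over the finite-dimensional algebra $A_{\overline{\mu}}:=\mathcal{U}/(k_{1}^{\ell}-\xi^{\ell\overline{\mu_{1}}},\,k_{2}^{\ell}-\xi^{\ell\overline{\mu_{2}}})$ with $\mathcal{U}=\mathcal{U}_{\xi}\mathfrak{sl}(2|1)/(e_{1}^{\ell},f_{1}^{\ell})$, and a $\mathcal{U}^{H}$-submodule of a weight module is exactly an $A_{\overline{\mu}}$-submodule that is a sum of weight spaces; hence $\mathscr{C}^{H}_{\overline{\mu}}$ is semisimple if and only if the algebra $A_{\overline{\mu}}$ is. From the Poincaré--Birkhoff--Witt basis one reads $\dim_{\mathbb{C}}A_{\overline{\mu}}=2^{4}\cdot\ell^{2}\cdot\ell^{2}=16\ell^{4}$. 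By the classification of \cite{BaDaMb97} the simple $A_{\overline{\mu}}$-modules are exactly the simple highest-weight modules whose highest weight lies in $\overline{\mu}$ modulo $\ell\mathbb{Z}^{2}$; since $\overline{\mu}\notin\mathit{G}_{s}$ all of them are typical, hence of dimension $4\ell$, and there are precisely $\ell^{2}$ of them. Therefore $\sum_{S\text{ simple}}(\dim_{\mathbb{C}}S)^{2}=\ell^{2}(4\ell)^{2}=16\ell^{4}=\dim_{\mathbb{C}}A_{\overline{\mu}}$, which forces $\operatorname{rad}(A_{\overline{\mu}})=0$. (Alternatively, once (2) is proved, (1) follows from the Proposition preceding this theorem, since for $\overline{\mu}\notin\mathit{G}_{s}$ every simple object of $\mathscr{C}_{\overline{\mu}}$ is typical, hence projective.)

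\emph{Part (2).} Let $V_{\mu}$ be typical. If $\overline{\mu}\notin\mathit{G}_{s}$ this is immediate from (1), so assume $\overline{\mu}\in\mathit{G}_{s}$. The idea is to exhibit $V_{\mu}$ inside a projective module built by tensoring typical modules that lie in \emph{semisimple} degrees, where Theorem \ref{dlnd} is available. Choose typical modules $V_{\lambda},V_{\lambda'}$ with $\overline{\lambda},\overline{\lambda'}\notin\mathit{G}_{s}$ and $\overline{\lambda}+\overline{\lambda'}=\overline{\mu}$ (possible, as $\mathit{G}_{s}$ omits a dense set of cosets); by (1) they are projective, so $M:=V_{\lambda}\otimes V_{\lambda'}\in\texttt{Proj}$ because $\texttt{Proj}$ is an ideal (retract-closed and absorbent to the tensor product). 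Now $V_{\mu}\otimes V_{\lambda'}^{*}$ is a tensor product of two typical modules lying in the degree $\overline{\mu}-\overline{\lambda'}=\overline{\lambda}\notin\mathit{G}_{s}$, so it decomposes, with parities, by Theorem \ref{dlnd}. A careful, parity-sensitive choice of the complex highest weights of $\lambda,\lambda'$ arranges simultaneously that (i) $V_{\lambda}$ is a direct summand of $V_{\mu}\otimes V_{\lambda'}^{*}$, and (ii) among the $4\ell$ terms of the character identity $\chi_{M}=\chi_{V_{\lambda}}\chi_{V_{\lambda'}}$ furnished by Theorem \ref{dlnd} exactly one has weight $\mu$. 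From (i) and the adjunctions $\Hom(M,V_{\mu})\cong\Hom(V_{\lambda},V_{\mu}\otimes V_{\lambda'}^{*})$ and $\Hom(V_{\mu},M)\cong\Hom(V_{\mu}\otimes V_{\lambda'}^{*},V_{\lambda})$ both spaces are non-zero, so $V_{\mu}$ is simultaneously a quotient and a submodule of the projective module $M$; from (ii), together with the fact that every atypical simple has dimension $<4\ell$, the character identity gives $[M:V_{\mu}]+[M:\overline{V}_{\mu}]\le1$, hence $[M:V_{\mu}]=1$. A surjection $M\twoheadrightarrow V_{\mu}$ forces the projective cover $P(V_{\mu})$ to be a direct summand of $M$, occurring with multiplicity one and with $[P(V_{\mu}):V_{\mu}]=1$; an injection $V_{\mu}\hookrightarrow M$ must, for multiplicity reasons, factor through that summand, so $V_{\mu}\subseteq\operatorname{soc}(P(V_{\mu}))$. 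Finally, an indecomposable module with simple head $V_{\mu}$, containing $V_{\mu}$ in its socle and with $[\,\cdot\,:V_{\mu}]=1$, must equal $V_{\mu}$; hence $P(V_{\mu})=V_{\mu}$ and $V_{\mu}$ is projective.

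A much quicker route is available whenever $\mu_{1}\notin\tfrac{1}{2}\mathbb{Z}$: pick any typical $V_{\nu}$ with $\overline{\nu}\notin\mathit{G}_{s}$ (projective by (1)); by Proposition \ref{tinh S'}, $S'(\nu,\mu)\neq0$, because the factor $\{\ell\alpha_{1}\}$ with $\alpha_{1}=\mu_{1}-\ell+1$ is non-zero exactly for $\mu_{1}\notin\tfrac{1}{2}\mathbb{Z}$. The open Hopf link then yields morphisms $V_{\mu}\to V_{\mu}\otimes V_{\nu}\otimes V_{\nu}^{*}\to V_{\mu}$, namely $\Id\otimes\coev_{V_{\nu}}$ followed by the partial trace over $V_{\nu}$ of the double braiding $c_{V_{\nu},V_{\mu}}c_{V_{\mu},V_{\nu}}$, whose composite is $S'(\nu,\mu)\Id_{V_{\mu}}\neq0$; since $V_{\mu}\otimes V_{\nu}\otimes V_{\nu}^{*}\in\texttt{Proj}$, $V_{\mu}$ is a retract of a projective. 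I expect the genuine obstacle to be exactly the complementary case $\mu_{1}\in\tfrac{1}{2}\mathbb{Z}$: there every such Hopf-link scalar vanishes and $V_{\mu}$ unavoidably lives over a non-semisimple $A_{\overline{\mu}}$, so one is thrown back on the argument of the previous paragraph, and the one genuinely delicate step is the explicit parity-aware choice of $\lambda,\lambda'$ making (i), (ii) and the coset constraints hold at once --- which is possible precisely because the combinatorics of Theorem \ref{dlnd} is applied in the semisimple degree $\overline{\lambda}$ rather than in $\overline{\mu}$.
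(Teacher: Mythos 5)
For Part (1) your dimension count is essentially the paper's argument, streamlined. The paper proves the surjectivity of $\mathcal{A}\twoheadrightarrow\prod_{i}\mathcal{M}_{4\ell}(\mathbb{C})$ by an explicit construction (a central element separating the $\ell^2$ typical highest weights via Lemmas \ref{ttai zij}--\ref{ttai z}, then $\Omega,c,k_1$ separating the $4\ell$ weight lines inside each $V_{\mu_i}$), whereas you invoke Wedderburn directly: once the simple $A_{\overline{\mu}}$-modules are known to be exactly the $\ell^2$ typical modules of dimension $4\ell$, the inequality $\dim (A_{\overline{\mu}}/\mathrm{rad}\, A_{\overline{\mu}})=\sum_S(\dim S)^2\ge\ell^2(4\ell)^2=16\ell^4=\dim A_{\overline{\mu}}$ already forces the radical to vanish. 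Both are sound, given the classification from \cite{BaDaMb97}. The paper then devotes a separate paragraph to deducing semi-simplicity of $\mathscr{C}^H_{\overline{\mu}}$ from that of $\mathscr{C}_{\overline{\mu}}$ (by decomposing via highest weight vectors in $\Ker e_1\cap\Ker e_2\cap\Ker e_3$); you compress this to one sentence, which is a bit quick, but not a genuine gap.

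For Part (2) the paper's proof is exactly your ``quick route,'' applied with no exception to every typical module, and it does work for all of them: the split into $\mu_1\notin\tfrac12\mathbb{Z}$ and $\mu_1\in\tfrac12\mathbb{Z}$ that drives the rest of your argument rests on a misreading of Proposition \ref{tinh S'}. You conclude $S'(\nu,\mu)=0$ for $\mu_1\in\tfrac12\mathbb{Z}$ because the numerator $\{\ell\alpha_1\}$ vanishes, but you are forgetting the denominator: for a \emph{typical} $V_\mu$ the condition $\mu_1\in\tfrac12\mathbb{Z}$ is equivalent to $\mu_1\in -1+\tfrac{\ell}{2}\mathbb{Z}$ (typicality excludes $\mu_1\in\{0,\dots,\ell-2\}+\tfrac{\ell}{2}\mathbb{Z}$, which are the other $\ell-1$ cosets of $\tfrac{\ell}{2}\mathbb{Z}$ inside $\tfrac12\mathbb{Z}$), and then $\{\alpha_1\}=\{\mu_1+1\}$ vanishes simultaneously. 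The ratio $\{\ell\alpha_1\}/\{\alpha_1\}$ is, up to a unit, the finite geometric sum over the $p$-index of the loop component, $\sum_{p=0}^{\ell-1}\xi^{-2(\mu_1+1)p}$; when $\mu_1+1\in\tfrac{\ell}{2}\mathbb{Z}$ the ratio of that series is $\xi^{-2(\mu_1+1)}=1$ and the sum is $\ell\neq0$, not $0$. The other two factors $\{\alpha_2\}=-\{\mu_2\}$ and $\{\alpha_1+\alpha_2\}=-\{\mu_1+\mu_2+1\}$ are nonzero by the remaining typicality conditions. So $S'(\nu,\mu)\neq 0$ for every typical $V_\mu$, $V_\mu$ is a retract of $V_\nu\otimes V_\mu\otimes V_\nu^*\in\texttt{Proj}$, and your long projective-cover fallback is unnecessary. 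It is also not complete as written: the ``careful parity-sensitive choice'' of $\lambda,\lambda'$ achieving your (i) and (ii) simultaneously is asserted rather than constructed, and the multiplicity bookkeeping $[M:V_\mu]=1$ together with $V_\mu$ being both a sub and a quotient would still need the structure of the (non-semisimple) block to force $P(V_\mu)=V_\mu$.
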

We select and fix a $\overline{\mu}\in \mathit{G} \backslash \mathit{G}_{s}$, note $\mu_{i}=(\mu_{1}+i_{1}, \mu_{2}+i_{2}) \in \overline{\mu}, \ i_{1},i_{2}=0,1,...,\ell-1$, that is $\mu_{i} \in \{(\mu_1+i_{1}, \mu_2+i_{2}):\ i_{1},i_{2} =0,1,...,\ell-1 \}$. We have the two following lemmas. 
\begin{Lem} \label{ttai zij}
For all $\mu_{i}, \mu_{j} \in \overline{\mu}: \mu_{i} \ne \mu_{j}$ there exists $z_{ij} \in \mathcal{Z}$ such that $\chi_{\mu_{i}}(z_{ij}) \ne \chi_{\mu_{j}}(z_{ij})$ where $\chi_{\mu_{i}}(z_{ij}) \in \mathbb{C}$ is defined by $\rho_{\mu_i}(z_{ij})=\chi_{\mu_{i}}(z_{ij})\Id_{V_{\mu_i}}$.
\end{Lem}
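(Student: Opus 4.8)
The plan is to produce the separating central elements from the ribbon structure on $\mathscr{C}^{H}$ rather than from an explicit Casimir. For a typical module $V_{\nu}$, encircling an arbitrarily coloured strand by a $V_{\nu}$-coloured loop defines an endomorphism that is natural in the colour of the strand: concretely, $V\mapsto \ptr_{V_{\nu}}\big(c_{V_{\nu},V}\circ c_{V,V_{\nu}}\big)\in\End_{\mathscr{C}^{H}}(V)$ is a natural transformation of $\Id_{\mathscr{C}^{H}}$. Restricted to the block $\mathscr{C}_{\overline{\mu}}$, which is the module category of the finite dimensional algebra $\mathcal{U}/(e_1^{\ell},f_1^{\ell},k_1^{\ell}-\xi^{\ell\overline{\mu_1}},k_2^{\ell}-\xi^{\ell\overline{\mu_2}})$, such a natural transformation is given by a central element $z_{\nu}$; it acts on the simple modules by scalars, and since $\chi_{\mu_i}$ only sees the action on $V_{\mu_i}$ this is all that is needed. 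Because $\overline{\mu}\notin\mathit{G}_{s}$, every simple object of $\mathscr{C}_{\overline{\mu}}$, in particular each $V_{\mu_i}$, is typical; by the definitions of $\langle\,\cdot\,\rangle$ and of $S'$, the element $z_{\nu}$ then acts on $V_{\mu_i}$ by the scalar $S'(\nu,\mu_i)$ computed in Proposition \ref{tinh S'}.

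The first step is a nonvanishing remark: $S'(\nu,\mu_i)\neq 0$ for every typical $V_{\nu}$ and every $i$. Writing $\alpha^{(i)}=(\mu_1+i_1-\ell+1,\mu_2+i_2+\tfrac{\ell}{2})$ for the parameter attached to $V_{\mu_i}$ as in Proposition \ref{tinh S'}, the latter expresses $S'(\nu,\mu_i)$ as a root of unity times $\frac{\{\ell\alpha^{(i)}_1\}\{\alpha^{(i)}_2\}\{\alpha^{(i)}_1+\alpha^{(i)}_2\}}{\{\alpha^{(i)}_1\}}$. Since $\ell$ is odd, vanishing of $\{\ell\alpha^{(i)}_1\}$ or of $\{\alpha^{(i)}_1\}$ would force $\mu_1\in\tfrac12\mathbb{Z}$, vanishing of $\{\alpha^{(i)}_2\}$ would force $\overline{\mu_2}\in\{\overline 0,\overline{1/2}\}$, and vanishing of $\{\alpha^{(i)}_1+\alpha^{(i)}_2\}$ would force $\overline{\mu_1+\mu_2}\in\{\overline 0,\overline{1/2}\}$; by the description of $\mathit{G}_{s}$ recalled just above, each of these possibilities contradicts $\overline{\mu}\notin\mathit{G}_{s}$. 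Hence $S'(\nu,\mu_i)\in\mathbb{C}^{\times}$ for all $i$ and all typical $\nu$.

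Now fix $\mu_i\neq\mu_j$ and set $(m_1,m_2)=(i_1-j_1,i_2-j_2)\in\mathbb{Z}^{2}\setminus\{0\}$. Using the formula of Proposition \ref{tinh S'} together with the identity $\{\ell(x+n)\}=\{\ell x\}$ for $n\in\mathbb{Z}$, one finds that $S'(\nu,\mu_i)/S'(\nu,\mu_j)$ equals $\xi^{-2\beta_2(2m_2+m_1)-2\beta_1m_2}$ times a nonzero scalar independent of $\nu$, where $\beta=(\nu_1-\ell+1,\nu_2+\tfrac{\ell}{2})$. Since $(m_1,m_2)\neq(0,0)$ we have $(2m_2+m_1,m_2)\neq(0,0)$ (if $m_2=0$ then $m_1\neq0$), so $\nu\mapsto\xi^{-2\beta_2(2m_2+m_1)-2\beta_1m_2}$ is a nonconstant continuous function of $\nu$ on $\mathbb{C}^{2}$, hence nonconstant on the dense set of typical weights. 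Therefore it cannot be identically equal to the reciprocal of the above $\nu$-independent scalar, so there is a typical $\nu$ with $S'(\nu,\mu_i)\neq S'(\nu,\mu_j)$; taking $z_{ij}=z_{\nu}$ gives $\chi_{\mu_i}(z_{ij})=S'(\nu,\mu_i)\neq S'(\nu,\mu_j)=\chi_{\mu_j}(z_{ij})$.

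The main obstacle is the honest part of the first step: identifying the encircling endomorphisms with genuine central elements, despite $\mathcal{U}_{\xi}\mathfrak{sl}(2|1)$ not being quasitriangular (Remark after the first definition of Section 3). One has to run the partial-trace construction with the truncated $R$-matrix $\check{\mathcal{R}}\mathcal{K}$, exactly as in the proof of Theorem \ref{ptur}, check that it is well defined on the block $\mathscr{C}_{\overline{\mu}}$ and lands in its centre; since the $V_{\mu_i}$ are modules over the corresponding finite dimensional quotient of $\mathcal{U}_{\xi}\mathfrak{sl}(2|1)$, and $\chi_{\mu_i}$ factors through it, this suffices for the statement. Once this is granted the separation argument above is elementary. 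Alternatively one may run the same strategy with the explicit Casimir-type generators of $\mathcal{Z}$ from \cite{BaDaMb97}, replacing $S'(\nu,\mu_i)$ by their eigenvalues on $V_{\mu_i}$; the verification of separation is then a finite computation with those eigenvalue formulas.
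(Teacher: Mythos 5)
Your separation computation is clean and correct: writing $\alpha^{(i)}_k-\alpha^{(j)}_k=m_k\in\mathbb{Z}$ and using Proposition~\ref{tinh S'}, the ratio $S'(\nu,\mu_i)/S'(\nu,\mu_j)$ is indeed $\xi^{-2\beta_2(2m_2+m_1)-2\beta_1 m_2}$ times a $\nu$-independent nonzero constant, and $(m_1,m_2)\neq(0,0)$ does force $(2m_2+m_1,m_2)\neq(0,0)$, so this phase is nonconstant in $\nu$; likewise the nonvanishing of each $S'(\nu,\mu_i)$ follows exactly as you say from $\overline{\mu}\notin\mathit{G}_s$. But the step you yourself flag as ``the honest part'' is a genuine gap, not a routine verification, and it is precisely where your route differs from the paper. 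The encircling operator $\ptr_{V_\nu}\bigl(c_{V_\nu,V}\circ c_{V,V_\nu}\bigr)$ is only a natural endomorphism of $\Id_{\mathscr{C}^H_{\overline{\mu}}}$. That category is \emph{not} the full module category of the finite dimensional algebra $\mathcal{A}=\mathcal{U}/(e_1^{\ell},f_1^{\ell},k_1^{\ell}-\xi^{\ell\overline{\mu_1}},k_2^{\ell}-\xi^{\ell\overline{\mu_2}})$: its objects carry the extra data of the operators $h_1,h_2$ and its morphisms are required to commute with them, so the usual identification $\End(\Id_{A\text{-mod}})\cong Z(A)$ does not apply directly. Worse, the braiding itself uses $\mathcal{K}=\xi^{-h_1\otimes h_2-\cdots}$, whose partial trace over $V_\nu$ produces ``non-integer powers of $k_i$'' on the surviving strand, which a priori do not lie in $\mathcal{U}_\xi\mathfrak{sl}(2|1)$ or in $\mathcal{A}$. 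So it is not clear that a single element of $\mathcal{Z}$ (or even of $Z(\mathcal{A})$) realizes the scalar $S'(\nu,\mu_i)$ simultaneously on every $V_{\mu_i}$; and the lemma as stated asks specifically for $z_{ij}\in\mathcal{Z}$, the center of $\mathcal{U}_\xi\mathfrak{sl}(2|1)$, which is yet stronger than $Z(\mathcal{A})$. Without filling this in, the argument produces a separating \emph{scalar function} $\mu_i\mapsto S'(\nu,\mu_i)$ but not a separating central \emph{element}, which is the content of the lemma.

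The paper takes the alternative route you mention in passing, and it avoids the problem entirely: it works with the explicit central elements $C_p\in\mathcal{Z}$ of \cite{BaDaMb97}, whose eigenvalues $\chi_\mu(C_p)=(\xi-\xi^{-1})^2\xi^{(2p-1)(\mu_1+2\mu_2)}[\mu_2][\mu_1+\mu_2+1]$ are known. Assuming $\chi_{\mu_i}=\chi_{\mu_j}$ on all of $\mathcal{Z}$, it looks at the ratio $\chi(C_{p+1})/\chi(C_p)$ to extract $2(k+2m)\equiv 0\pmod{\ell}$, uses $\ell$ odd to deduce $k+2m\equiv 0$, and then feeds this back into $\chi(C_p)$ to reduce to the identity $[a][b]=[a+m][b-m]\Leftrightarrow[a-b+m][m]=0$, which together with typicality forces $m=0$ and then $k=0$. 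That is an entirely elementary computation landing squarely inside $\mathcal{Z}$, whereas your approach needs a bridge from the ribbon structure on $\mathscr{C}^H$ back to honest central elements. If you want to keep the encircling picture, you would have to supply that bridge (for instance by exhibiting $\mathcal{A}$ itself as a weight object and evaluating the natural transformation on the regular representation, and then arguing the result lifts to $\mathcal{Z}$); as written, the proof is not complete.
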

\begin{proof}
We consider $\mu=(\mu_{1},\mu_{2}), \mu^{'}=(\mu_{1} +k,\mu_{2}+m) \ k,m=0,1,...,\ell-1$.
We suppose that $\forall \ z \in \mathcal{Z}: \chi_{\mu}(z) = \chi_{\mu^{'}}(z)$.
Consider the central elements $C_{p}$ where $p \in \mathbb{Z}$ (see \cite{BaDaMb97}). We have 
\begin{align*}
&\chi_{\mu}(C_{p})=(\xi-\xi^{-1})^{2}\xi^{(2p-1)(\mu_{1}+2\mu_{2})}[\mu_{2}][\mu_{2}+\mu_{1}+1],\\
&\chi_{\mu^{'}}(C_{p})=(\xi-\xi^{-1})^{2}\xi^{(2p-1)(\mu_{1}+2\mu_{2}+k+2m)}[\mu_{2}+m][\mu_{2}+\mu_{1}+k+m+1]. 
\end{align*}
Because $\chi_{\mu}(C_{p})=\chi_{\mu^{'}}(C_{p})$ and $[\mu_{2}][\mu_{2}+\mu_{1}+1] \ne 0$, we deduce that 
\begin{equation*}
\begin{cases}
\frac{\chi_{\mu}(C_{p+1})}{\chi_{\mu}(C_{p})}=\frac{\chi_{\mu^{'}}(C_{p+1})}{\chi_{\mu^{'}}(C_{p})}\\
\chi_{\mu}(C_{p})=\chi_{\mu^{'}}(C_{p}).
\end{cases} 
\end{equation*}
This is equivalent to
\begin{equation*}
\begin{cases}
\xi^{2(\mu_{1}+2\mu_{2})}=\xi^{2(\mu_{1}+2\mu_{2}+k+2m)}\\
\xi^{(2p-1)(\mu_{1}+2\mu_{2})}[\mu_{2}][\mu_{2}+\mu_{1}+1]=\xi^{(2p-1)(\mu_{1}+2\mu_{2}+k+2m)}[\mu_{2}+m][\mu_{2}+\mu_{1}+k+m+1],
\end{cases}
\end{equation*}
which implies 
\begin{equation} \label{mk}
2(k+2m)=0 \ (\text{modulo} \ \ell \mathbb{Z})
\end{equation} and
\begin{equation} \label{mocmk}
[\mu_{2}][\mu_{2}+\mu_{1}+1]=\xi^{k+2m}[\mu_{2}+m][\mu_{2}+\mu_{1}+k+m+1].
\end{equation}

	Because $\ell$ odd, \eqref{mk} implies $k+2m=0 \ (\text{modulo} \ \ell \mathbb{Z})$ $\Leftrightarrow k+m=-m \ (\text{modulo} \ \ell \mathbb{Z})$.
On the other hand, \eqref{mocmk} is equivalent to $[a][b]=[a+m][b-m]\Leftrightarrow-[a-b+m][m]=0\Leftrightarrow [-\mu_{1}-1+m][m]=0 \Rightarrow m=0$ where $a=\mu_{2}, b=\mu_{1}+\mu_{2}+1$.
Because $m=0$, we have $k=0 \ (\text{modulo} \ \ell\mathbb{Z})\Rightarrow k=0$.
\end{proof}
%%----------------------------------------
	\begin{Lem} \label{ttai z}
	Let $\mathcal{V}$ be a vector space over $\mathbb{C}$, $I$ be a finite set and consider a family of $\mathbb{C}$-linear functions $a_{i}: \mathcal{V} \rightarrow \mathbb{C}, \ i \in I$. If for all $i \neq j \ \exists \ u_{ij} \in \mathcal{V}$ such that $a_{i}(u_{ij}) \neq a_{j}(u_{ij})$, then it exists $u_{0} \in \mathcal{V}$ such that $\forall \ i\neq j \ a_{i}(u_{0}) \neq a_{j}(u_{0})$.
	\end{Lem}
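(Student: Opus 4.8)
The plan is to recast the statement geometrically. For each unordered pair $i\neq j$ in $I$, consider the linear map $a_i-a_j\colon\mathcal{V}\to\mathbb{C}$ and put $H_{ij}=\Ker(a_i-a_j)=\{u\in\mathcal{V}:a_i(u)=a_j(u)\}$. This is a linear subspace of $\mathcal{V}$, and by hypothesis $u_{ij}\notin H_{ij}$, so $H_{ij}$ is a \emph{proper} subspace. Thus the conclusion to be proved is exactly that $\mathcal{V}\neq\bigcup_{i\neq j}H_{ij}$: any $u_0$ lying in the complement of this finite union of proper subspaces does the job, since for it $a_i(u_0)\neq a_j(u_0)$ for all $i\neq j$.

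First I would reduce to the finite-dimensional case. Choose one witness $u_{ij}\in\mathcal{V}$ for every pair $i\neq j$, and let $W$ be the finite-dimensional subspace of $\mathcal{V}$ spanned by these finitely many vectors. For each pair, $H_{ij}\cap W$ is still a proper subspace of $W$, because it does not contain $u_{ij}\in W$. Hence it suffices to find $u_0\in W$ avoiding all the $H_{ij}\cap W$; in other words, it is enough to know that a finite-dimensional vector space over $\mathbb{C}$ is not a finite union of proper subspaces.

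Then I would prove that standard fact. One clean way: argue by contradiction, taking a covering $W=V_1\cup\dots\cup V_n$ by proper subspaces with $n$ minimal; by minimality there is $x\in V_1\setminus\bigcup_{k\geq 2}V_k$ and some $y\notin V_1$, and among the infinitely many vectors $y+\lambda x$ with $\lambda\in\mathbb{C}$ none lies in $V_1$ while each $V_k$ with $k\geq 2$ contains at most one of them, so for all but finitely many $\lambda$ the vector $y+\lambda x$ lies in no $V_k$ — a contradiction. Equivalently, since $\mathbb{C}$ is infinite, the nonzero polynomial function $\prod_{i\neq j}(a_i-a_j)$ on $W\cong\mathbb{C}^{\dim W}$ (a product of nonzero linear forms) cannot vanish identically, and any point where it is nonzero gives the desired $u_0$.

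There is essentially no serious obstacle here; the only points that genuinely matter are that the ground field $\mathbb{C}$ is \emph{infinite} (the statement can fail over finite fields) and that $\mathcal{V}$ is a priori allowed to be infinite-dimensional, which is precisely what the reduction to the finitely-generated subspace $W$ takes care of.
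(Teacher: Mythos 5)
Your proof is correct and takes essentially the same route as the paper: both restrict attention to the finite-dimensional space spanned by the witnesses $u_{ij}$ (the paper does this by parametrizing $u=\sum_{i\neq j}x_{ij}u_{ij}$ with $x\in\mathbb{C}^N$, you by passing to $W=\operatorname{span}(u_{ij})$) and then invoke the fact that, $\mathbb{C}$ being infinite, a finite-dimensional $\mathbb{C}$-vector space is not a finite union of proper hyperplanes. Your write-up is a bit more explicit about why each pulled-back hyperplane is proper and why the covering fact holds, but the idea is the same.
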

	\begin{proof}
	We set $u=\sum_{i \ne j}x_{ij}u_{ij} \in \mathcal{V}$ with $x_{ij} \in \mathbb{C}, \ i,j \in I$. We note $x=(x_{ij}) \in \mathbb{C}^{N}$. We consider the set $X=\{x \in \mathbb{C}^{N} \ \exists i\ne j \ a_{i}(u) =a_{j}(u) \}=\{x \in \mathbb{C}^{N}: \ \sum_{i \ne j}(a_{i}(u_{ij}) - a_{j}(u_{ij}))x_{ij}=0 \}$, this is a finite reunion of hyperplanes of $\mathbb{C}^{N}$. This proves that $\exists x \notin X$ and this $x$ does not have the above property. That is, it exists $u_{0} \in \mathcal{V}$ such that $a_{i}(u_{0}) \neq a_{j}(u_{0})$ for all $i\ne j$.
	\end{proof}
	
	Now we change the basis of module $V_{\mu}$ as follows. We set
\begin{equation*}
	w_{\rho, \sigma, p}^{'}=
	\begin{cases}
	w_{\rho, \sigma, p} \ &\text{if} \ \rho=\sigma=0,1\\
	f_{1}^{p}w_{1,0,0} \ &\text{if} \ \rho=1, \sigma=0\\
	e_{1}^{(\ell-1)-p}w_{0,1,r-1} \ &\text{if} \ \rho=0, \sigma=1
	\end{cases}
\end{equation*}
	where $p=0,...,\ell-1$.
For the basis $\{w_{\rho, \sigma, p}^{'}\}$ we have the actions
\begin{align*}
&k_{1}w_{\rho, \sigma, p}^{'}=\xi^{\mu_{1}+\rho-\sigma-2p}w_{\rho, \sigma, p}^{'},\\
&k_{2}w_{\rho, \sigma,p}^{'}=\xi^{\mu_{2}+\sigma+p}w_{\rho, \sigma, p}^{'},\\
&e_{1}w_{1,0, p}^{'}=[p][\mu_{1}+2-p]w_{1,0, p-1}^{'},\\
&f_{1}w_{1,0, p}^{'}=w_{1,0, p+1}^{'},\\
&e_{1}w_{0,1, p}^{'}=w_{0,1, p-1}^{'},\\
&f_{1}w_{0,1, p}^{'}=[p+1][\mu_{1}-(p+1)]w_{0,1, p+1}^{'}.
\end{align*}
\begin{proof}[Proof of the theorem \ref{dl2}]
	
	We begin to show that $\mathscr{C}_{\overline{\mu}}$ is semi-simple.	
	We set $\mathcal{A}=\mathcal{U}/(k_{1}^{\ell}-\xi^{\ell\overline{\mu_{1}}}, k_{2}^{\ell}-\xi^{\ell\overline{\mu_{2}}})$. The density theorem implies that the application $\rho: \mathcal{A} \rightarrow \prod_{\mu_{i}}\End(V_{\mu_{i}})\cong \prod_{i=1}^{\ell^2}\mathcal{M}_{4\ell}(\mathbb{C})$ is surjective. We give here an elementary proof. 
	
	By Lemma \ref{ttai zij} and \ref{ttai z}, it exists an element $z \in \mathcal{Z}$ such that $\forall \ \mu_{i} \ne \mu_{j} \ \chi_{\mu_{i}}(z) \ne \chi_{\mu_{j}}(z)$ and we set $z_{i}=\chi_{\mu_{i}}(z) \ i=1,...,\ell^{2}$ and we introduce the ideal $J=\prod_{i=1}^{\ell^2}(z-z_i)\mathcal{A}$. \vspace{13pt}
		
	Firstly, we consider the representation $\rho: \mathcal{A}/(z-z_{i})\rightarrow \End_{\mathbb{C}}(V_{\mu_{i}})$. We will prove that $\rho$ is a surjection. We have $\End_{\mathbb{C}}(V_{\mu_{i}})\cong \mathcal{M}_{4\ell}(\mathbb{C})$.\vspace{13pt} We consider the elements $\Omega=\frac{k_{1}\xi+k_{1}^{-1}\xi^{-1}}{\{1\}^2}+f_{1}e_{1}=\frac{k_{1}\xi^{-1}+k_{1}^{-1}\xi}{\{1\}^2}+e_{1}f_{1}, c=k_{1}k_{2}^{2}, k_{1}$ in $\mathcal{U}_\xi\mathfrak{sl}(2|1)$.
	The actions of these elements on the basis $w_{\rho, \sigma, p}^{'}$ are defined by
\begin{align*}
&\Omega w_{0,0,p}^{'}=(\xi^{\mu_1+1}+\xi^{-\mu_{1}-1})w_{0,0,p}^{'}, \Omega w_{1,1,p}^{'}=(\xi^{\mu_1+1}+\xi^{-\mu_{1}-1})w_{1,1,p}^{'},\\
&\Omega w_{0,1,p}^{'}=\frac{\xi^{\mu_1}+\xi^{-\mu_{1}}}{\{1\}^{2}}w_{0,1,p}^{'}, \Omega w_{1,0,p}^{'}=\frac{\xi^{\mu_1+2}+\xi^{-\mu_{1}-2}}{\{1\}^{2}}w_{1,0,p}^{'},\\
& cw_{\rho, \sigma, p}^{'}=\xi^{\mu_{1}+2\mu_{2}+\rho + \sigma}w_{\rho, \sigma, p}^{'},\\
&k_{1}w_{\rho, \sigma, p}^{'}=\xi^{\mu_{1}+\rho-\sigma-2p}w_{\rho, \sigma, p}^{'}.\\
\end{align*}
	We now check that for all $w_{\rho, \sigma, m}^{'} \neq w_{\rho^{'}, \sigma^{'}, j}^{'} \ \exists \ u \in \{\Omega, c, k_{1}\}$ such that $\chi_{\mu_{i}}^{\rho, \sigma, m}(u) \neq \chi_{\mu_{i}}^{\rho^{'}, \sigma^{'}, j}(u)$ where $\rho(u)w_{\rho, \sigma, m}^{'}=\chi_{\mu_{i}}^{\rho, \sigma, m}(u)w_{\rho, \sigma, m}^{'} \  \rho,\sigma,\rho^{'},\sigma^{'} \in \{0,1\}, m,j \in \{0,...,\ell-1\}$. Indeed, if $\rho+\sigma \neq \rho^{'}+\sigma^{'}$ then we select $u=c$ and we have $c w_{\rho, \sigma, m}^{'} \neq c w_{\rho^{'}, \sigma^{'}, j}^{'}$. If $\rho+\sigma = \rho^{'}+\sigma^{'}$ then we consider two cases: if $(\rho,\sigma) \neq (\rho^{'},\sigma^{'})$ we select $u = \Omega$ and $\Omega w_{\rho, \sigma, m}^{'} \neq \Omega w_{\rho^{'}, \sigma^{'}, j}^{'}$; if $(\rho,\sigma) = (\rho^{'},\sigma^{'})$ we select $u=k_{1}$ and we have $k_{1} w_{\rho, \sigma, m}^{'} \neq k_{1} w_{\rho^{'}, \sigma^{'}, j}^{'}$ because $m \neq j$.

By Lemma \ref{ttai z} it exists a vector $u_{0} \in \mathbb{C}\langle \Omega, c, k_{1}\rangle$-space generated by the elements $\Omega, c, k_{1}$ such that $\chi_{\mu_{i}}^{\rho, \sigma, m}(u_{0}) \neq \chi_{\mu_{i}}^{\rho^{'}, \sigma^{'}, j}(u_{0})$ for all $w_{\rho, \sigma, m}^{'} \neq w_{\rho^{'}, \sigma^{'}, j}^{'}$.
The matrix $B$ determined by the application $\rho(u_{0})$ is a diagonal matrix which has $4\ell$ different eigenvalues. The image of the projection on the $i$-th eigenspace of $B$ is the matrix $E_{ii}, \ i=1,...,4\ell$. Hence the matrix $E_{ii}$ is in the image of $\rho$.
	
	For $i \in \{1,...,\ell^{2}\}, j \in \{1,...,4\ell\}$ we have $\rho(\mathcal{A}/(z-z_{i}))(v_{j}) \subset V_{\mu_i}$ (here we note $v_{j}$ the $j$-th vector of the basis) and $V_{\mu_i}$ is simple. Thus we deduce $\rho(\mathcal{A}/(z-z_{i}))(v_{j}) = V_{\mu_i}$. This proves that it exists $a_{0} \in \mathcal{A}/(z-z_{i})$ such that $\rho(a_{0})(v_{j}) = v_{n} \ \forall n \in \{1,...,4\ell\}$.  
	
	The endomorphism $\rho(a_{0})$ determines the matrix $(\rho(a_{0}))$ where $\rho(a_{0})_{jn}=1$. The matrix $E_{jn}$ is equal to $E_{jj}\rho(a_{0})_{jn}E_{nn}$, i.e. the matrix $E_{jn}$ is the image of an element in $\mathcal{A}/(z-z_{i})$. So the application $\rho$ is a surjection. 
This implies that the application $\prod_{i=1}^{\ell^{2}}\mathcal{A}/(z-z_{i}) \rightarrow \prod_{i=1}^{\ell^{2}}\mathcal{M}_{4\ell}(\mathbb{C})$ is surjective.%\vspace{13pt}
	
	Secondly, the composition $\prod_{i=1}^{\ell^{2}}\mathcal{A}/(z-z_{i}) \rightarrow \mathcal{A}/J \rightarrow \prod_{i=1}^{\ell^{2}}\mathcal{A}/(z-z_{i})$ is the identity. Thus, the application $\mathcal{A}/J \rightarrow \prod_{i=1}^{\ell^{2}}\mathcal{A}/(z-z_{i})$ is surjective. We deduce a series of surjections $\mathcal{A}\twoheadrightarrow \mathcal{A}/J \twoheadrightarrow \prod_{i=1}^{\ell^{2}}\mathcal{A}/(z-z_{i})\twoheadrightarrow \prod_{i=1}^{\ell^{2}}\mathcal{M}_{4\ell}(\mathbb{C})$, this sequence determines the surjection $\mathcal{A} \twoheadrightarrow \prod_{i=1}^{\ell^{2}}\mathcal{M}_{4\ell}(\mathbb{C})$. 
	
	Furthermore, the two algebras $\mathcal{A}$ and $\prod_{i=1}^{\ell^{2}}\mathcal{M}_{4\ell}(\mathbb{C})$ have the same dimension $16\ell^{4}$. This implies that this surjection is an isomorphism. This demonstrates that $\mathcal{A}$ is semi-simple. The category $\mathscr{C}_{\overline{\mu}}$ is also semi-simple.
	
	Now we prove that $\mathscr{C}_{\overline{\mu}}^{H}$ is semi-simple. Let $V^{H}$ be a module in $\mathscr{C}_{\overline{\mu}}^{H}$. Set $W=\Ker e_{1} \cap \Ker e_{2} \cap \Ker e_{3}$, it is a vector space of the highest weight vectors (the weights for $(h_1, h_2)$). We call $\{v_j\}_{j=1}^{n}$ a basis of weight vectors of $W$, we have $h_iv_j=\mu_{j}^{i}v_j, i=1, 2$.
	 So each $v_j$ generates a $\mathcal{U}_\xi^{H}\mathfrak{sl}(2|1)$-module $V_{j}$,
	 $$V_{j}=\mathcal{U}_\xi^{H}\mathfrak{sl}(2|1).v_j = \mathcal{U}_\xi\mathfrak{sl}(2|1).v_j=\mathcal{U}_{-}.v_j$$
	 where $\mathcal{U}_{-}= \Alg \langle f_1, f_2, f_3 \rangle \subset \mathcal{U}_\xi\mathfrak{sl}(2|1)$ and $\dim (\mathcal{U}_{-}) = 4 \ell$.
	 Thus $\dim (V_j)\leq 4 \ell$ and $V_j$ is simple (because there is no module in $\mathscr{C}_{\overline{\mu}}^{H}$ of dimension strictly between $0$ and $4 \ell$).
	 
	 Set $V^{'}=\sum_{i=1}^{n}V_i \subset V^{H}$. We can write $V^{H}=V^{'}\oplus V^{''}$ as a $\mathcal{U}_\xi\mathfrak{sl}(2|1)$-module. However $W \subset V^{'}$ which implies $V^{''}=0$ (because there is no highest weight vector in $V^{''}$) and $V^{H}=V^{'}=\sum_{i=1}^{n}V_i$. Because the $V_i$ are simple, so $V^{H} =\bigoplus_{i \in I}V_i$ where $I \subset \{1, ...,n\}$. Thus $V^{H}$ is semi-simple. 
	
	For the second assertion $(2)$, if $V \in \mathscr{C}_{\overline{\mu}}^{H}$ and $\mathscr{C}_{\overline{\mu}}^{H}$ is semi-simple, then $V$ is projective. If not, $(2)$ follows from $S'(V_{\mu},V) \neq 0$ where $V_{\mu}$ is any projective typical module which implies that $V$ is a direct factor of $V_{\mu} \otimes V \otimes V_{\mu}^{*} \in \texttt{Proj}$. This implies that $V$ is a projective module. 
\end{proof}
\section{Modified traces on the projective modules}
\subsection{Ambidextrous module}
For each object $V$ of the category $\mathscr{C}$ and any endomorphism $f$ of $V \otimes V$ set 
\begin{align*}
&\ptr_{R}(f)=(\Id_{V}\otimes \tev_{V})\circ (f \otimes \Id_{V^{*}})\circ (\Id_{V} \otimes \coev_{V}) \in \End(V),\\
&\ptr_{L}(f)=(\ev_{V}\otimes \Id_{V})\circ (\Id_{V^{*}} \otimes f)\circ (\tcoev_{V} \otimes \Id_{V}) \in \End(V).
\end{align*}
In the ribbon category $\mathscr{C}^{H}$ of nilpotent weight $\mathcal{U}_{\xi}^{H}\mathfrak{sl}(2|1)$-modules, we say that a module $V$ is {\em ambidextrous} if $V$ simple and $\ptr_{L}(f)=\ptr_{R}(f)$ for all $f \in \End(V \otimes V)$ (see \cite{NgBpVt09}).

\begin{The}
Each typical module $V_{\mu}$ of category $\mathscr{C}^{H}$ is an ambidextrous module.
\end{The}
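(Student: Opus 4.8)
The plan is to show $\ptr_{L}(f)=\ptr_{R}(f)$ for every $f\in\End_{\mathscr{C}^{H}}(V_{\mu}\otimes V_{\mu})$; since $\ptr_{L}$ and $\ptr_{R}$ are $\mathbb{C}$-linear maps $\End_{\mathscr{C}^{H}}(V_{\mu}\otimes V_{\mu})\to\End_{\mathscr{C}^{H}}(V_{\mu})=\mathbb{C}\,\Id_{V_{\mu}}$, it is enough to test this on a spanning set. I would first treat one conveniently chosen typical module, then reduce the general case to it.

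\emph{Reduction to a generic module.} By Proposition \ref{tinh S'}, for typical modules $V_{\mu_{0}}$ and $V_{\mu}$ the scalar $S'(\mu_{0},\mu)$ equals a power of $\xi$ times $\{\ell\alpha_{1}\}\{\alpha_{2}\}\{\alpha_{1}+\alpha_{2}\}/\{\alpha_{1}\}$ with $\alpha=(\mu_{1}-\ell+1,\mu_{2}+\tfrac{\ell}{2})$, and the typicality conditions recalled after the description of $V_{\mu}$ are precisely what forces $\{\alpha_{2}\}\neq0$, $\{\alpha_{1}+\alpha_{2}\}\neq0$ and $\{\ell\alpha_{1}\}/\{\alpha_{1}\}\neq0$; hence $S'(\mu_{0},\mu)\neq0$. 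Then, exactly as in the proof of Theorem \ref{dl2}(2), the non-vanishing of $S'(\mu_{0},\mu)$ gives a splitting realizing $V_{\mu}$ as a direct summand of $V_{\mu_{0}}\otimes V_{\mu}\otimes V_{\mu_{0}}^{*}$, so $V_{\mu}$ lies in the tensor ideal generated by $V_{\mu_{0}}$. Granting that one such $V_{\mu_{0}}$ is ambidextrous, \cite{NgJkBp11,NgJkBp13} provide a non-zero modified trace on the ideal it generates, and the partial-trace property of a modified trace forces $\ptr_{L}=\ptr_{R}$ on $\End_{\mathscr{C}^{H}}(V\otimes V)$ for every simple $V$ of that ideal, in particular for $V_{\mu}$. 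So it suffices to produce one ambidextrous typical module, and I choose $\mu_{0}$ generic enough that $\overline{2\mu_{0}}\notin\mathit{G}_{s}$ and that in the decomposition of $V_{\mu_{0}}\otimes V_{\mu_{0}}$ given by Theorem \ref{dlnd} the $4\ell$ simple summands are pairwise non-isomorphic and have pairwise distinct twists (this rules out only countably many proper subsets of weights).

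\emph{The generic case.} For such $\mu_{0}$, Theorem \ref{dl2}(1) makes the block $\mathscr{C}^{H}_{\overline{2\mu_{0}}}$ semi-simple, so $V_{\mu_{0}}\otimes V_{\mu_{0}}=\bigoplus_{U}U$ with the $U$ pairwise non-isomorphic simple modules and $\End_{\mathscr{C}^{H}}(V_{\mu_{0}}\otimes V_{\mu_{0}})\cong\mathbb{C}^{4\ell}$ commutative. Since $V_{\mu_{0}}$ is simple, $c_{V_{\mu_{0}},V_{\mu_{0}}}^{2}=(\theta_{V_{\mu_{0}}}\otimes\theta_{V_{\mu_{0}}})^{-1}\theta_{V_{\mu_{0}}\otimes V_{\mu_{0}}}$ acts on $U$ by the scalar $\theta_{U}\theta_{V_{\mu_{0}}}^{-2}$; these scalars are pairwise distinct by Lemma \ref{kihieu alpha} and the choice of $\mu_{0}$, so $c_{V_{\mu_{0}},V_{\mu_{0}}}$ has $4\ell$ distinct eigenvalues and therefore generates $\End_{\mathscr{C}^{H}}(V_{\mu_{0}}\otimes V_{\mu_{0}})$ as a $\mathbb{C}$-algebra. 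It thus remains to check $\ptr_{L}(c_{V_{\mu_{0}},V_{\mu_{0}}}^{n})=\ptr_{R}(c_{V_{\mu_{0}},V_{\mu_{0}}}^{n})$ for $n\geq0$; but each side is the Reshetikhin-Turaev invariant of the $(V_{\mu_{0}},V_{\mu_{0}})$-ribbon tangle obtained from the $2$-strand braid $\sigma_{1}^{n}$ by closing its right, resp. its left, strand, and these two tangles are related by the half-turn of the braid about its vertical in-plane axis, an isotopy preserving crossings and the blackboard framing. Hence $\ptr_{L}=\ptr_{R}$ on $\End_{\mathscr{C}^{H}}(V_{\mu_{0}}\otimes V_{\mu_{0}})$, i.e. $V_{\mu_{0}}$ is ambidextrous, and the reduction above completes the proof for every typical $V_{\mu}$.

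The main obstacle is the generic case: establishing that $c_{V_{\mu_{0}},V_{\mu_{0}}}$ generates the whole endomorphism algebra (which uses the explicit decomposition of Theorem \ref{dlnd} and the explicit twist of Lemma \ref{kihieu alpha}) and that the left and right partial traces of every power of the self-braiding coincide in a ribbon category. A variant avoiding the half-turn argument is to observe that $\End_{\mathscr{C}^{H}}(V_{\mu_{0}}\otimes V_{\mu_{0}})$ is then spanned by endomorphisms factoring through simple modules and to invoke the standard fact that $\ptr_{L}$ and $\ptr_{R}$ agree on any such endomorphism, the twist corrections cancelling by the ribbon axiom $\theta_{X^{*}}=(\theta_{X})^{*}$ (Proposition \ref{twist}).
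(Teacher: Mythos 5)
Your approach is genuinely different from the paper's. The paper works directly with one arbitrary typical $V_{\mu}$, exhibits the two invariant vectors $x_{-}w_{+}$ and $x_{+}w_{-}$ in $V_{\mu}\otimes V_{\mu}^{*}$ explicitly from the PBW relations, and then applies the self-duality criterion of Theorem~3.1.3 of \cite{NgJkBp13} (the indecomposable summand of $V\otimes V^*$ through which $\coev_{V}$ factors is self-dual). You instead first prove ambidexterity for one sufficiently generic $V_{\mu_{0}}$ by noting that $\End(V_{\mu_{0}}\otimes V_{\mu_{0}})$ is commutative semi-simple, that $c_{V_{\mu_{0}},V_{\mu_{0}}}$ generates it (since $c^{2}$ has simple spectrum), and that $\ptr_L$ and $\ptr_R$ agree on every power of the braiding by a tangle isotopy; you then transfer ambidexterity to every typical $V_{\mu}$ via $S'(\mu_{0},\mu)\neq 0$ and the modified trace. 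The paper's argument is algebraic, self-contained, and valid uniformly in $\mu$; yours is more topological, showcases the interplay with the RT functor, and dovetails nicely with the Hopf link computations elsewhere in the paper.

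Two points deserve more care. First, in the transfer step you assert that the partial-trace property of a modified trace $t$ on $\mathcal{I}_{V_{\mu_0}}$ ``forces $\ptr_L=\ptr_R$'' for every simple $V$ in the ideal; what you actually obtain is $t_{V}(\ptr_R f)=t_{V\otimes V}(f)=t_{V}(\ptr_L f)$, so you still need $t_{V}(\Id_{V})\neq 0$. This does hold — either by the non-degeneracy theorem of \cite{NgJkBp13} or by the Hopf-link symmetry $d(\mu_0)S'(\mu,\mu_0)=d(\mu)S'(\mu_0,\mu)$ together with $d(\mu_0)\neq 0$ and $S'(\mu,\mu_0)\neq 0$ — but the step should be spelled out. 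Second, the ``variant'' at the end is not a standard fact as stated: for a projector onto a simple summand $U\subset V\otimes V$, the spherical trace only yields $\ptr_L(p_U)\qdim(V)=\ptr_R(p_U)\qdim(V)$, which is vacuous precisely because $\qdim(V_{\mu_0})=0$ here, and the appeal to $\theta_{X^*}=(\theta_X)^*$ does not by itself repair this. Your main half-turn argument, by contrast, is sound — the $\pi$-rotation about the in-plane vertical axis reverses both left/right and front/back, so crossing signs and orientations are preserved and the framed tangle closing the right strand of $\sigma_1^n$ is isotopic to the one closing the left strand — and should be retained as the actual justification, with the ``variant'' either dropped or replaced by the same isotopy applied to general coupon diagrams.
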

%%------------------
\begin{proof}
	We will prove this theorem in two steps:\\
	Step 1. Proving the existence of two nonzero $\mathcal{U}_{\xi}^{H}\mathfrak{sl}(2|1)$-invariant vectors $x_{-}w_{+}$ and $x_{+}w_{-}$.\\
	Step 2. Applying Theorem 3.1.3 \cite{NgJkBp13} gives us the affirmation that $V_{\mu}$ is ambidextrous.

	Call $v_{+}, v_{+}^{'}$ the highest weight vectors of $V_{\mu}, V_{\mu}^{*}$ and $v_{-}, v_{-}^{'}$ the lowest weight vectors of $V_{\mu}, V_{\mu}^{*}$. Set $x_{-}=f_{2}f_{3}f_{1}^{\ell-1}, x_{+}=e_{2}e_{3}e_{1}^{\ell-1}, w_{+}=v_{+} \otimes v_{+}^{'}, w_{-}=v_{-} \otimes v_{-}^{'}$. We will prove that the two vectors $x_{-}w_{+}$ and $x_{+}w_{-}$ are $\mathcal{U}_{\xi}^{H}\mathfrak{sl}(2|1)$-invariant.

	We consider the actions of generator elements $e_{i},h_{i}, f_{i}$ on $x_{-}w_{+}$. The highest weight vector (resp. lowest) of $V_{\mu}$ is $v_{+}=w_{0,0,0}$ (resp. $v_{-}=w_{1,1,\ell-1})$. The highest weight vector  (resp. lowest) of $V_{\mu}^{*}$ is $v_{+}^{'}=w_{1,1,\ell-1}^{*}$ (resp. $v_{-}^{'}=w_{0,0,0}^{*}$).

	The weight of vector $w_{+}=v_{+} \otimes v_{+}^{'}$ is equal to the sum of the weights of $v_{+}$ and $v_{+}^{'}$. That is $\text{weight}(w_{+})=(\mu_{1}, \mu_{2})+(-\mu_{1}+2\ell -2, -\mu_{2}-\ell )=(2\ell -2, -\ell)$. Furthermore, $\text{weight}(x_{-}w_{+})=\text{weight}(f_{2}f_{3}f_{1}^{\ell-1}w_{+})=\text{weight}(f_{2}f_{1}f_{2}f_{1}^{\ell-1}w_{+})=-\ell \text{weight}(e_{1})-2\text{weight}(e_{2})+\text{weight}(w_{+})=-\ell (2,-1)-2(-1,0)+(2\ell -2, -\ell)=(0,0)$. It implies that $h_{i}x_{-}w_{+}=0$.

	We also have the relations below between the generator elements in $\mathcal{U}_{\xi}^{H}\mathfrak{sl}(2|1)$ (see (B1) \cite{BaDaMb97}):
%%-----------------
\begin{align*}
&f_{1}f_{2}^{\rho}f_{3}^{\sigma}f_{1}^{p}=\xi^{\rho-\sigma}f_{2}^{\rho}f_{3}^{\sigma}f_{1}^{p+1}-\rho(1-\sigma)\xi^{-\rho}f_{2}^{\rho-1}f_{3}^{\sigma+1}f_{1}^{p},\\
&f_{2}f_{2}^{\rho}f_{3}^{\sigma}f_{1}^{p}=(1-\rho)f_{2}^{\rho+1}f_{3}^{\sigma}f_{1}^{p+1},\\
&[e_{1}, f_{2}^{\rho}f_{3}^{\sigma}f_{1}^{p}]=\sigma(1-\rho)(-1)^{\sigma}f_{2}^{\rho+1}f_{3}^{\sigma-1}f_{1}^{p}\xi^{h_{1}-2p+1}+[p]f_{2}^{\rho}f_{3}^{\sigma}f_{1}^{p-1}[h_{1}-p+1],\\
&e_{2}f_{2}^{\rho}f_{3}^{\sigma}f_{1}^{p}-(-1)^{\rho+\sigma}f_{2}^{\rho}f_{3}^{\sigma}f_{1}^{p}e_{2}
=\rho f_{2}^{\rho-1}f_{3}^{\sigma}f_{1}^{p}[h_{2}+p+\sigma]+\sigma(-1)^{\rho}f_{2}^{\rho}f_{3}^{\sigma-1}f_{1}^{p+1}\xi^{-h_{2}-p}
\end{align*}
%%--------------
where $(p, \rho, \sigma) \in \mathbb{N} \times \{0,1\} \times \{0,1\}$.
With the above relations, it is easy to check $f_{i}x_{-}w_{+}=0$.

	The fourth relation above gives us $e_{2}f_{2}f_{3}f_{1}^{\ell-1}-f_{2}f_{3}f_{1}^{\ell-1}e_{2}=f_{3}f_{1}^{\ell-1}[h_{2}+\ell]$. Because $e_{2}(v_{+} \otimes v_{+}^{'})=0$ and $[h_{2}+\ell](v_{+} \otimes v_{+}^{'})=0$, we deduce $e_{2}x_{-}w_{+}=0$. 

	The third relation gives $[e_{1}, f_{2}f_{3}f_{1}^{\ell-1}]=[\ell-1]f_{2}f_{3}f_{1}^{\ell-2}[h_{1}-\ell+2]$. Because $e_{1}(v_{+} \otimes v_{+}^{'})=0$ and $[h_{1}-\ell+2](v_{+} \otimes v_{+}^{'})=0$, we deduce $e_{1}x_{-}w_{+}=0$. 

	Consequently, we conclude that $x_{-}w_{+}$ is an $\mathcal{U}_{\xi}^{H}\mathfrak{sl}(2|1)$-invariant vector. The demonstration that the vector $x_{+}w_{-}$ is $\mathcal{U}_{\xi}^{H}\mathfrak{sl}(2|1)$-invariant is analogous using the relations obtained by applying the automorphism $\omega$ of superalgebra $\mathcal{U}_{\xi}^{H}\mathfrak{sl}(2|1)$ where $\omega(e_{i})=(-1)^{\deg e_{i}}f_{i}, \omega(f_{i})=(-1)^{\deg f_{i}}e_{i}, \omega(k_{i})=k_{i}^{-1}, \omega(h_{i})=-h_{i}, i=1,2$.

	Furthermore $\Delta x_{-}=x_{-}\otimes 1 +$ a sum of tensor products of two elements of $\mathcal{U}_{\xi}^{H}\mathfrak{sl}(2|1)$ with negative weight. Thus $\Delta x_{-}w_{+}$ contains the nonzero vector $x_{-}v_{+} \otimes v_{+}^{'}=f_{2}f_{3}f_{1}^{\ell-1}v_{+} \otimes v_{+}^{'}=w_{1,1,\ell-1} \otimes v_{+}^{'}$. We conclude that the vector $x_{-}w_{+}$ is nonzero. Similarly, the vector $x_{+}w_{-}$ is nonzero.

For step 2, we use the following results:

	The decomposition of the tensor product $V \otimes V^{*}$ is a direct sum of indecomposable modules $$V \otimes V^{*}=P_{1} \oplus ... \oplus P_{m}.$$

	The set of invariant vectors $w \in V \otimes V^{*}$ is in bijection with $\coev_{V}(\mathbb{C})$ because $\Hom_{\mathscr{C}}(\mathbb{C}, V \otimes V^{*})\cong \Hom_{\mathscr{C}}(V , V)\cong \mathbb{C}$.

	The vector $w_{+}$ (resp. $w_{-}$) is the highest weight vector (resp. lowest weight vector) of $V \otimes V^{*}$. Then there exists a unique integer $k$ (resp. $l$) such that $w_{+} \in P_{k}$ (resp. $w_{-} \in P_{l}$).
The weight of $w_{+}$ (resp. $w_{-}$) is $\lambda_{+}=(2\ell-2,-\ell)$ (resp. $\lambda_{-}=(-2\ell+2,\ell)$). Because $\lambda_{-}=-\lambda_{+}$ and $(V \otimes V^{*})^{*}\simeq (V \otimes V^{*})$, this implies $P_{k}^{*}\simeq P_{l}$.
	
	In addition, $\coev_{V}(1) \in P_{l}, \coev_{V}(1) \in P_{k}$ because $x_{+}P_{l}\subset P_{l}, x_{-}P_{k}\subset P_{k}$, then $P_{k}=P_{l}$. That is $P_{k}=P_{k}^{*}$. By Theorem 3.1.3 \cite{NgJkBp13}, it gives us the affirmation that $V_{\mu}$ ambidextrous.
\end{proof}
\begin{rmq} All typical modules are projective and ambidextrous.
\end{rmq}
%%%%%%%%%%%%%%%%%%%%%%%%%%%%%%%%%%%%%%%%%%%%%%%%%%%%%%%%%%%%%%%%%%%%%%%%%%%%%%%%%%%%%%%%%%%%%%%%%%%%%%%%%
\subsection{Modified traces on the projective modules}
\begin{Def}
Let $\mathcal{I}$ be an ideal of $\mathscr{C}$ {\em (see \cite{NgJkBp11})}. The family of linear applications $t =(t_{V}: \End_{\mathscr{C}}(V) \to \Bbbk)_{V \in \mathcal{I}}$ is a {\em trace (modified trace)} on $\mathcal{I}$ 
if it satisfies:\\
$\forall U,V \in \mathcal{I}, \forall W \in \mathscr{C}$,
$$\forall f \in \Hom_{\mathscr{C}}(U,V), \forall g \in \Hom_{\mathscr{C}}(V, U), {t}_{V}(f \circ g)= {t}_{U}(g \circ f)$$
$$\forall f \in \End_{\mathscr{C}}(V \otimes W), \ t_{V\otimes W}(f)=t_{V}(\ptr_{R}(f)).$$
\end{Def}
	
	We also have $$\forall f \in \End_{\mathscr{C}}(W \otimes V), \ t_{W\otimes V}(f)=t_{V}(\ptr_{L}(f)).$$ 
	
	Given $V$ as a typical module. The module $V$ is ambidextrous and projective. This implies that the ideal generated by this module is $\mathcal{I}_{V}=\texttt{Proj}$ (see \cite{NgJkBp11}). So we have the following theorem.
	\begin{The} \label{main theo}
	There exists a unique modified trace $t=\{t_{P}\}_{P \in \texttt{Proj}}$ on the ideal \texttt{Proj} of projective modules of $\mathscr{C}^{H}$, 
$$t_{P}: \End(P) \rightarrow \mathbb{C}, P \in \texttt{Proj}.$$
	\end{The}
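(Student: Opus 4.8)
\medskip

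The plan is to obtain the statement as a direct application of the general theory of modified traces developed in \cite{NgJkBp13} (building on \cite{NgJkBp11}): in a $\mathbb{C}$-linear ribbon category the ideal generated by an ambidextrous simple object carries a modified trace, and this trace is unique up to multiplication by a global nonzero scalar. So the proof amounts to checking that a typical module $V_\mu$ supplies such an ambidextrous simple object and that the ideal it generates is exactly $\texttt{Proj}$.

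First I would record the ingredients already in place. The category $\mathscr{C}^H$ is $\mathbb{C}$-linear and ribbon (Theorem \ref{ptur} together with Proposition \ref{twist}), with simple objects having one-dimensional endomorphism spaces, and $\texttt{Proj}$ is an ideal in the sense of \cite{NgJkBp11}. A typical module $V_\mu$ is simple by definition, projective by Theorem \ref{dl2}(2), and ambidextrous by the theorem just proved. Finally, as noted right before the statement, the ideal $\mathcal{I}_{V_\mu}$ generated by $V_\mu$ coincides with $\texttt{Proj}$: the inclusion $\mathcal{I}_{V_\mu}\subseteq\texttt{Proj}$ is immediate because $\texttt{Proj}$ is an ideal containing the projective object $V_\mu$ and closed under retracts, while the reverse inclusion follows from the fact, established in the proof of Theorem \ref{dl2} through the non-vanishing of $S'$, that every (indecomposable) projective $P$ is a retract of $V_\mu\otimes P\otimes V_\mu^{*}$.

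With these facts the modified trace is built in the standard way: fix $\mathsf{t}_0:=t_{V_\mu}(\Id_{V_\mu})\in\mathbb{C}^{\times}$; for $P\in\texttt{Proj}$ choose morphisms $i\colon P\to V_\mu\otimes X$ and $r\colon V_\mu\otimes X\to P$ with $r\circ i=\Id_P$, note that $\ptr_R(i\circ f\circ r)\in\End_{\mathscr{C}^H}(V_\mu)\cong\mathbb{C}\,\Id_{V_\mu}$ for $f\in\End_{\mathscr{C}^H}(P)$, write it as $c(f)\,\Id_{V_\mu}$, and set $t_P(f)=\mathsf{t}_0\,c(f)$. That $c(f)$ does not depend on the chosen retraction $(i,r,X)$, and that the family $\{t_P\}$ satisfies the cyclicity and partial-trace axioms of a modified trace, reduces — using the pivotal structure of $\mathscr{C}^H$ — exactly to the equality $\ptr_L=\ptr_R$ on $\End_{\mathscr{C}^H}(V_\mu\otimes V_\mu)$, i.e.\ to ambidexterity of $V_\mu$; this is precisely what is proved in \cite{NgJkBp13}, so I would invoke it rather than reproduce the diagrammatic arguments. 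For uniqueness, any modified trace $t'$ on $\texttt{Proj}$ restricts on $\End_{\mathscr{C}^H}(V_\mu)\cong\mathbb{C}\,\Id_{V_\mu}$ to multiplication by some $\lambda\in\mathbb{C}$; cyclicity together with the partial-trace axiom, applied to the same retractions, then forces $t'_P=(\lambda/\mathsf{t}_0)\,t_P$ for every $P\in\texttt{Proj}$, so the modified trace is unique up to the overall scalar $\lambda$. One pins it down canonically by the normalization $t_{V_\mu}(\Id_{V_\mu})=d(\mu)$, the symmetry $d(\mu')\,S'(\mu,\mu')=d(\mu)\,S'(\mu',\mu)$ of Definition \ref{dn d} being exactly what guarantees this choice is consistent across all typical modules. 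The substantive obstacles — ambidexterity of $V_\mu$ and the identification $\mathcal{I}_{V_\mu}=\texttt{Proj}$ — have already been cleared; the only verification specific to this theorem is that $\mathsf{t}_0\neq0$, equivalently $d(\mu)\neq0$ for typical $\mu$, which is immediate from the explicit formula for $d(\mu)$ since $\{\ell\alpha_1\}$, $\{\alpha_2\}$, $\{\alpha_1+\alpha_2\}$ and $\{\alpha_1\}$ are all nonzero for such $\mu$.
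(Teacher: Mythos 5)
Your proposal is correct and follows essentially the same route as the paper. The paper itself gives no proof environment for this theorem: it simply records in the preceding two sentences that $V_\mu$ is ambidextrous and projective, cites \cite{NgJkBp11} for the identity $\mathcal{I}_{V_\mu}=\texttt{Proj}$, and then invokes the general existence/uniqueness theory of modified traces; the normalization $t_{V_\mu}(\Id_{V_\mu})=d(\mu)$ is stated immediately after the theorem. Your write-up unpacks exactly these ingredients and, helpfully, makes explicit the point the paper leaves implicit, that ``unique'' means unique once one fixes the overall scalar, with the symmetry $d(\mu')S'(\mu,\mu')=d(\mu)S'(\mu',\mu)$ ensuring the normalization by $d$ is coherent. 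The only minor imprecision is attributing the inclusion $\texttt{Proj}\subseteq\mathcal{I}_{V_\mu}$ to ``the proof of Theorem~\ref{dl2}'': that proof uses the retraction trick $V\hookrightarrow V_\mu\otimes V\otimes V_\mu^*$ to show typical modules are projective, not that arbitrary projectives lie in $\mathcal{I}_{V_\mu}$; the paper instead delegates that identity directly to \cite{NgJkBp11} (where it follows from $V_\mu$ being simple and projective). This does not affect the validity of your argument, only its bookkeeping of where the inclusion is actually proved.
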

	If $P=V_{\mu}$ is a typical module, then $t_{V_{\mu}}(f)=\langle f\rangle d(\mu), f \in \End(V_{\mu})$, $d(\mu)=t_{V_{\mu}}(\Id_{V_{\mu}})$ is determined by the Definition \ref{dn d}.
%%%%%%%%%%%%%%%%%%%%%%%%%%%%%%%%%%%%%%%%%%%%%%%%%%%%%%%%%%%%%%%%%%%%%%%%%%%%%%%%%%%%%%%%%
\subsection{Invariants of embedded graphs}
	Recall that $\mathscr{C}^{H}$ is the $\mathbb{C}$-linear ribbon category of the finite dimensional representations of $\mathcal{U}_{\xi}^{H}\mathfrak{sl}(2|1)$, \texttt{Proj} is the ideal of projective modules and $t$ is a trace on \texttt{Proj}.

	We call $\mathscr{G}$ the set of $\mathscr{C}^{H}$-colored closed ribbon graphs, that are the $\mathscr{C}^{H}$-colored ribbon graphs in $S^3$. We have $\mathscr{G}\cong \End_{\mathcal{T}}(\emptyset)$.

	We use the concept of a cutting presentation of $\mathscr{C}^{H}$-colored closed ribbon graph: If a diagram $T$ represents a $\mathscr{C}^{H}$-colored ribbon graph which is an endomorphism of $\mathcal{T}$, its lower and upper parts are formed by the same sequences of $k$ vertical colored strands. It is then possible, as for a braid of $k$ strands, to consider the closure $\widehat{T}$ obtained by joining its $k$ top vertices to its $k$ bottom vertices by $k$ parallel strands. This construction is actually the categorical trace in $\mathcal{T}$: we have $\widehat{T}=\tr_{\mathcal{T}}(T) \in \End_{\mathcal{T}}(\emptyset).$ We say that $T$ is a cutting presentation with $k$ strands of the closed graph $\widehat{T}$ and that $\widehat{T}$ is the closure of $T$ (see \cite{BePM12}). 

	A closed graph $T$ of $\mathcal{T}$ is said to be $\mathscr{C}^{H}$-colored {\em admissible} if there is at least one strand of $T$ colored by $P \in \texttt{Proj}$. Let $\mathscr{G}_{a}$ be the set of isotopy classes of $\mathscr{C}^{H}$-colored admissible ribbon graphs.

	From the trace $t$ on $\texttt{Proj}$ we have the theorem below.
\begin{The} \label{dl F}
The application 
\begin{align*}
F': \mathscr{G}_{a} &\rightarrow \mathbb{C}\\
\widehat{T} &\mapsto t_{P}(F(T))
\end{align*}
is well defined. Here, $P \in \texttt{Proj}, T \in \End_{\mathcal{T}}((P,+))$ is a cutting presentation with one strand of $\widehat{T}$. That is to say the complex number $t_{P}(F(T))$ does not depend on the choice of $T$ but only of the isotopy class of the $\mathscr{C}^{H}$-colored graph $\widehat{T}$.
\end{The}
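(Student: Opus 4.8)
The Reshetikhin--Turaev functor $F\colon\mathcal{T}\to\mathscr{C}^{H}$ and the modified trace $t=\{t_{P}\}_{P\in\texttt{Proj}}$ are already in hand (Theorem \ref{main theo}), so the only thing to prove is that the number $t_{P}(F(T))$ depends on the closed graph $\widehat{T}\in\mathscr{G}_{a}$ alone. The plan is therefore to take two one-strand cutting presentations $T_{1}\in\End_{\mathcal{T}}((P_{1},+))$ and $T_{2}\in\End_{\mathcal{T}}((P_{2},+))$, with $P_{1},P_{2}\in\texttt{Proj}$, of the same closed $\mathscr{C}^{H}$-colored graph $L=\widehat{T_{1}}=\widehat{T_{2}}$, and to show
$$t_{P_{1}}(F(T_{1}))=t_{P_{2}}(F(T_{2})).$$
The strand of $T_{i}$ is a cutting of an edge $e_{i}$ of $L$, necessarily colored $P_{i}\in\texttt{Proj}$, so I would split the argument according to whether $e_{1}=e_{2}$ or not.

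First I would handle the case $e_{1}=e_{2}=e$, where the two presentations differ only by the position $x_{1},x_{2}\in e$ of the cut (so $P:=P_{1}=P_{2}$). Putting an ambient isotopy of $S^{3}$ carrying $(L,x_{1})$ to $(L,x_{2})$ into general position with respect to a Morse function, it factors into elementary modifications of the cutting presentation: planar isotopies of $T$ inside $\mathcal{T}$ (which leave $F(T)$ unchanged), rotations of a coupon, and passages of the cut point across a local extremum or across a crossing of $e$ with another strand. A coupon rotation replaces $F(T)=h\circ g$ by $g\circ h$ for suitable $g\in\Hom_{\mathscr{C}^{H}}(P,X)$, $h\in\Hom_{\mathscr{C}^{H}}(X,P)$, and is absorbed by the cyclicity $t_{P}(h\circ g)=t_{X}(g\circ h)$ of $t$; the cup/cap and crossing passages are dealt with by the standard ribbon identities for $\ptr_{R}$ and $\ptr_{L}$ combined with naturality of the braiding $c$ of Theorem \ref{ptur} and of the twist $\theta$ of Proposition \ref{twist}. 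Every such move preserves $t_{P}(F(\cdot))$, so this settles the equality when $e_{1}=e_{2}$, and more generally shows that $t_{P}(F(T'))$ is the same for all one-strand presentations $T'$ of $L$ cut on a fixed edge.

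For $e_{1}\neq e_{2}$ I would introduce an auxiliary two-strand presentation: cutting $L$ at once at a point of $e_{1}$ and a point of $e_{2}$, and sliding the two boundary arcs down to parallel vertical strands, yields $T_{12}\in\End_{\mathcal{T}}((P_{1},+),(P_{2},+))$ with $\widehat{T_{12}}=L$. Setting $f=F(T_{12})\in\End_{\mathscr{C}^{H}}(P_{1}\otimes P_{2})$ and noting that $P_{1}\otimes P_{2}\in\texttt{Proj}$ (the ideal absorbs tensor products), the two defining properties of the modified trace give
$$t_{P_{1}}\bigl(\ptr_{R}(f)\bigr)=t_{P_{1}\otimes P_{2}}(f)=t_{P_{2}}\bigl(\ptr_{L}(f)\bigr).$$
Since $\ptr_{R}(f)$ is $F$ of a one-strand presentation of $L$ cut on $e_{1}$ (the second strand has been closed up) and $\ptr_{L}(f)$ is $F$ of a one-strand presentation of $L$ cut on $e_{2}$, the previous step identifies these with $t_{P_{1}}(F(T_{1}))$ and $t_{P_{2}}(F(T_{2}))$; combining the two displays finishes the proof that $F'$ is well defined on $\mathscr{G}_{a}$.

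The hard part will be the first step: one must check that the listed elementary modifications really do generate every change of cutting presentation induced by an ambient isotopy of $L$, and that each individually preserves $t_{P}\circ F$. The coupon case is pure cyclicity of $t$, and the braided/ribbon cases reduce to identities such as $\ptr_{R}(c_{V,W}\circ(g\otimes\Id))=\dots$ and $\theta_{V^{*}}=(\theta_{V})^{*}$ that are already available from Theorems \ref{ptur} and \ref{main theo} and Proposition \ref{twist}; so no new structural input is required, and the only genuine care is in the local verifications and in keeping every partially traced graph admissible, which the absorbing property of $\texttt{Proj}$ guarantees.
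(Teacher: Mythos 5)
Your second step is exactly the paper's proof: cut $\widehat{T}$ at the two chosen edges to get a two-strand presentation $T_{12}$, observe $F(T_{1})=\ptr_{R}(F(T_{12}))$ and $F(T_{2})=\ptr_{L}(F(T_{12}))$, note $P_{1}\otimes P_{2}\in\texttt{Proj}$, and conclude $t_{P_{1}}(F(T_{1}))=t_{P_{1}\otimes P_{2}}(F(T_{12}))=t_{P_{2}}(F(T_{2}))$ from the two partial-trace axioms of the modified trace. The paper stops there, treating the topological step --- that any two one-strand cuttings of the same closed graph arise as the right and left closures of a common two-strand presentation --- as a picture, implicitly using that $F$ is already an isotopy invariant of tangles in the cube.

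What you add is the first paragraph: a Morse-theoretic argument that $t_{P}\circ F$ is constant among one-strand presentations cut on a fixed edge, decomposed into coupon rotations (cyclicity of $t$) and local ribbon moves (naturality of $c$ and $\theta$). This is correct, but it is heavier than what the situation requires; both your ``same edge'' case and the potential mismatch between $T_{1}$ and the right-closure of $T_{12}$ can be resolved at once by choosing $T_{12}$ to be cut at the very points where $T_{1}$ and $T_{2}$ are cut, so that the closures agree with $T_{1},T_{2}$ up to an isotopy of tangles in the cube, which $F$ already ignores. In short, your route is valid and makes explicit a step the paper leaves to the reader, but the Morse decomposition re-proves a piece of the Reshetikhin--Turaev formalism that is already available, so the argument could be collapsed to the paper's two-line version without loss.
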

\begin{proof}
	First, we select an edge of $\widehat{T}$ and cut, we have the graph $T$. Then, we select and cut a second edge of $\widehat{T}$, we have the graph $T'$. 
By cutting $\widehat{T}$ in both these places, one obtains a graph $T_{2} \in \End_{\mathscr{G}_{a}}((P,+), (P', +))$ which is a presentation with two strands of $\widehat{T}$ and such that
$T=\epsh{Figure1}{7ex}
      \put(-23,0){\ms{T_{2}}}$, 
$T'=\epsh{Figure2}{7ex}
      \put(-19,0){\ms{T_{2}}}$.
 Finally we use the properties of the compatibility of trace  $t$:
\begin{align*}
t_{P}(F(T))&=t_{P}(\ptr_{R}(F(T_{2})))=t_{P \otimes P'}(F(T_{2}))\\
&=t_{P'}(\ptr_{L}(F(T_{2})))=t_{P'}(F(T')).
\end{align*}
\end{proof}
\begin{rmq} In the case $P=V_{\mu}$ typical, we have 
$$F'\left(\epsh{Figure3}{7ex} \put(-14,0){\ms{T}}\right)
=d(\mu)\left< \epsh{Figure4}{7ex} \put(-14,0){\ms{T}} \right>.$$
\end{rmq}
	The affirmation of the above theorem gives us a link invariant in the following corollary.
\begin{HQ}
The application $F': \{\text{links}\  \mathbb{C}^{2}- \text{colored} \} \rightarrow \mathbb{C}$ enables to associate with each link with $n$ ordered components a meromorphic function on $\mathbb{C}^{2n}$.
\end{HQ}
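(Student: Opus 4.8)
The plan is to fix a link $L$ with $n$ ordered components and colour its $i$-th component by the typical module $V_{\mu^{(i)}}$ of highest weight $\mu^{(i)}=(\mu^{(i)}_1,\mu^{(i)}_2)\in\mathbb{C}^2$, and then to show that $F'(L)$, regarded as a function of $(\mu^{(1)},\dots,\mu^{(n)})\in\mathbb{C}^{2n}$, is meromorphic. By the Remark describing when $V_\mu$ is typical, the set $\mathcal{N}\subset\mathbb{C}^{2n}$ on which some $V_{\mu^{(i)}}$ is not typical is a countable union of affine complex hyperplanes (of the shapes $\mu^{(i)}_1=c$, $\mu^{(i)}_2=c$ and $\mu^{(i)}_1+\mu^{(i)}_2=c$), so $\mathcal{O}:=\mathbb{C}^{2n}\setminus\mathcal{N}$ is open, dense and connected. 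On $\mathcal{O}$ every $V_{\mu^{(i)}}$ is typical, hence projective by Theorem \ref{dl2}(2), so Theorem \ref{dl F} makes $F'(L)$ well defined; presenting $L$ as the closure $\widehat{T}$ of a one-strand cutting presentation $T\in\End_{\mathcal{T}}((V_{\mu^{(1)}},+))$, the Remark after Theorem \ref{dl F} gives $F'(L)=d(\mu^{(1)})\,\langle T\rangle$ on $\mathcal{O}$.

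Next I would prove that $\langle T\rangle$ extends to an entire function on $\mathbb{C}^{2n}$. The action formulas of Section 3 make sense for every $\mu\in\mathbb{C}^2$ and, by the identity theorem applied to the (polynomial) defining relations, produce for every $\mu$ a $4\ell$-dimensional object $V_\mu\in\mathscr{C}^H$ on a fixed basis $\{w_{\rho,\sigma,p}\}$ independent of $\mu$ (with a correspondingly fixed dual basis of $V_\mu^*$), which is simple precisely when $\mu$ is typical. Fixing a diagram $D$ of the $(1,1)$-tangle $T$, the Reshetikhin--Turaev calculus writes $F(T)$ as a finite composite of tensor products of the structural morphisms $c_{V,W}^{\pm1}=\tau\circ\mathcal{R}^{\pm1}$, $\ev$, $\coev$, $\tev$, $\tcoev$, $\theta^{\pm1}$. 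In the fixed bases each of these is a matrix with entries that are entire functions of $(\mu^{(1)},\dots,\mu^{(n)})$: the entries of $\ev_V,\coev_V$ are constants; those of $\tev_V,\tcoev_V$ and $\theta_V^{\pm1}$ are powers of $\xi$ with exponent a polynomial in $\mu$ with integer coefficients (via the action of $\phi_0=k_1^{-\ell}k_2^{-2}$ and Lemma \ref{kihieu alpha}); the factor $\mathcal{K}=\xi^{-h_1\otimes h_2-h_2\otimes h_1-2h_2\otimes h_2}$ of $\mathcal{R}$ is diagonal with entries of the same type; and $\check{\mathcal{R}}$, thanks to $e_1^\ell=e_2^2=e_3^2=0$, is a \emph{finite} sum of tensor products $e_i^{a}\otimes f_i^{a}$ with constant coefficients whose action on $V_\mu\otimes V_{\mu'}$ has entries polynomial in the $\xi^{\mu^{(i)}_j}$ and in the quantum integers $[\mu^{(i)}_1-p+1]$, $[\mu^{(i)}_2+p+\sigma]$, hence entire. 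A finite product of entire matrix-valued functions is entire, so the matrix $M(\mu^{(1)},\dots,\mu^{(n)})$ of $F(T)$ is entire; on $\mathcal{O}$ the module $V_{\mu^{(1)}}$ is simple, so $M=\langle T\rangle\,\Id_{V_{\mu^{(1)}}}$, and therefore a fixed diagonal entry of $M$ is an entire function on $\mathbb{C}^{2n}$ agreeing with $\langle T\rangle$ on the dense set $\mathcal{O}$, i.e. the required entire extension.

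It then remains to assemble the pieces. By Definition \ref{dn d}, $d(\mu^{(1)})=\frac{\{\mu^{(1)}_1+1\}}{\ell\,\{\ell\mu^{(1)}_1\}\,\{\mu^{(1)}_2\}\,\{\mu^{(1)}_2+\mu^{(1)}_1+1\}}$ is a quotient of entire functions (each $\{x\}=\xi^x-\xi^{-x}$ is entire in $x$), hence meromorphic on $\mathbb{C}^2$ and a fortiori on $\mathbb{C}^{2n}$. Consequently $F'(L)=d(\mu^{(1)})\,\langle T\rangle$ is the product of a meromorphic function and an entire one, hence meromorphic on $\mathbb{C}^{2n}$. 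Independence of this meromorphic function from the auxiliary data (the diagram $D$, the presentation $T$, and the choice of component along which $L$ is cut) holds on $\mathcal{O}$ by the isotopy invariance of $F$ together with Theorem \ref{dl F}, and then on all of $\mathbb{C}^{2n}$ by the identity theorem, since $\mathcal{O}$ is connected and dense.

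I expect the main obstacle to be the bookkeeping of the second paragraph: one must verify, uniformly in $\mu$ and in a basis not depending on $\mu$, that \emph{every} elementary morphism produced by the Reshetikhin--Turaev recipe has entire matrix entries, the delicate points being the dual modules $V_\mu^*$ (hence the occurrences of $\phi_0^{\pm1}$ in $\tev$, $\tcoev$) and the fact that the quasi-$R$-matrix series $\check{\mathcal{R}}$ truncates to a finite, pole-free sum. Once this is established, the passage from ``scalar on the typical locus'' to ``entire extension of $\langle T\rangle$'' and the meromorphy of $d$ are routine, and the invariance is formal from Theorem \ref{dl F}.
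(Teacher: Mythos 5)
Your proof is correct and fills in exactly what the paper leaves implicit: the corollary is stated without its own argument, relying on the preceding Remark that on the typical locus $F'(\widehat{T}) = d(\mu^{(1)})\langle T\rangle$ with $d$ manifestly a quotient of entire functions, and the missing ingredient is your verification that $\langle T\rangle$ extends to an entire function of the weights, which you establish correctly by noting that the truncated quasi-$R$-matrix $\check{\mathcal{R}}$ is a finite nilpotent sum and that $\mathcal{K}$, $\phi_0$, $\theta^{\pm 1}$ and the (co)evaluations all have matrix entries of the form $\xi^{\mathrm{poly}(\mu)}$ (or constants or quantum integers) in a $\mu$-independent basis. The identity-theorem step to pass from the open dense connected typical locus to all of $\mathbb{C}^{2n}$, and to see that the resulting meromorphic function is independent of the auxiliary choices, is also a necessary part of the argument and you handle it correctly.
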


\section{Invariant of 3-manifolds}
 In the article \cite{FcNgBp14} the authors constructed $\mathscr{C}$-decorated 3-manifold invariants where $\mathscr{C}$ is a ribbon category. In the previous section, it was proven that $\mathscr{C}^{H}$ is a ribbon category, this suggests we construct an invariant of $\mathscr{C}^{H}$-decorated 3-manifolds. We recall some concepts, definitions and results from \cite{FcNgBp14}.
 %%--------------------------------------------------------------------------------------
\subsection{Relative $\mathit{G}$-modular categories}
  Let $\mathscr{C}$ be a $\Bbbk$-linear ribbon category where $\Bbbk$ is a field. A set of objects of $\mathscr{C}$ is said to be commutative if for any pair $\{V, W \}$ of these objects, we have $c_{V,W}\circ c_{W,V}=\Id_{W \otimes V}$ and $\theta_{V}=\Id_{V}$. Let $(Z, +)$ be a commutative group. A {\em realization} of $Z$ in $\mathscr{C}$ is a commutative set of objects $\{{\varepsilon^{t}}\}_{t \in Z}$ such that $\varepsilon^{0}=\mathbb{I}, \qdim(\varepsilon^{t})=1$ and $\varepsilon^{t} \otimes \varepsilon^{t^{'}}=\varepsilon^{t+t{'}}$ for all $t, t{'} \in Z$.
  %%--------------------------------------------------

 A realization of $Z$ in $\mathscr{C}$ induces an action of $Z$ on isomorphism classes of objects of $\mathscr{C}$ by $(t, V) \mapsto \varepsilon^{t} \otimes V$. We say that $\{{\varepsilon^{t}}\}_{t \in Z}$ is a {\em free realization} of $Z$ in $\mathscr{C}$ if this action is free. This means that $\forall t \in Z \backslash \{0\}$ and for any simple object $V \in \mathscr{C}, V \otimes \varepsilon^{t} \not\simeq V$. We call {\em simple $Z$-orbit} the reunion of isomorphism classes of an orbit for this action.
%%------------------------------------- 
 \begin{Def}[\cite{FcNgBp14}] \label{G-modrel}
 Let $(\mathit{G},\times)$ and $(Z, +)$ be two commutative groups. A $\Bbbk$-linear ribbon category $\mathscr{C}$ is $\mathit{G}$-modular relative to $\mathcal{X}$ with modified dimension {\em d} and periodicity group $Z$ if 
 \begin{enumerate}
 \item  the category $\mathscr{C}$ has a $\mathit{G}$-grading $\{\mathscr{C}_g\}_{g \in \mathit{G}}$,
 \item  the group $Z$ has a free realization $\{{\varepsilon^{t}}\}_{t \in Z}$ in $\mathscr{C}_{1}$ (where $1 \in \mathit{G}$ is the unit),
 \item  there is a $\mathbb{Z}$-bilinear application $\mathit{G}\times Z \rightarrow \Bbbk^{\times}, (g,t)\mapsto g^{\bullet t}$ such that $\forall V \in \mathscr{C}_{g}, \forall t \in Z, c_{V,\varepsilon^{t}}\circ c_{\varepsilon^{t}, V}=g^{\bullet t}\Id_{\varepsilon^{t} \otimes V}$,
 \item there exists $\mathcal{X} \subset \mathit{G}$ such that $\mathcal{X}^{-1}=\mathcal{X}$ and $\mathit{G}$ cannot be covered by a finite number of
translated copies of $\mathcal{X}$, in other words $\forall g_{1}, ..., g_{n} \in \mathit{G}, \cup_{i=1}^{n}(g_{i}\mathcal{X}) \neq \mathit{G}$,
 \item for all $g \in \mathit{G}\setminus \mathcal{X}$, the category $\mathscr{C}_{g}$ is semi-simple and its simple objects are in the reunion of a finite number of simple $Z$-orbits,
 \item  there exists a nonzero trace $t$ on ideal $\texttt{Proj}$ of projective objects of $\mathscr{C}$ and {\em d} is the associated modified dimension,
 \item  there exists an element $g \in \mathit{G}\setminus \mathcal{X}$ and an object $V \in \mathscr{C}_{g}$ such that the scalar $\Delta_{+}$ defined in Figure \ref{Kirby colour} is nonzero; similarly, there exists an element $g \in \mathit{G}\setminus \mathcal{X}$ and an object $V \in \mathscr{C}_{g}$ such that the scalar $\Delta_{-}$ defined in Figure \ref{Kirby colour} is nonzero,
 \begin{figure}
   \centering
   $$
   \begin{array}{ccc}
     F\left( \epsh{Figure51}{10ex}
 		\put(-35,2){\ms{\Omega_{\overline{\mu}}}}
 		\put(-20,-22){\ms{V}}\right) = \Delta_{-}\Id_{V}, \ 
 	F\left( \epsh{Figure52}{10ex}
 		\put(-35,2){\ms{\Omega_{\overline{\mu}}}}
 		\put(-20,-22){\ms{V}}\right) = \Delta_{+}\Id_{V}
   \end{array}
   $$
    %%Blabla.
	\caption{{$V \in \mathscr{C}_{g}$ and $\Omega_{\overline{\mu}}$ is a Kirby color of degree $\mu$.}}
	\label{Kirby colour}
 \end{figure} 
  \item the morphism $S(U, V)=F(H(U,V)) \neq 0 \in \End_{\mathscr{C}}(V)$, for all simple objects 	$U,V \in \texttt{Proj}$, where $$H(U,V)= \epsh{Figure6}{7ex}
      \put(-6,-7){\ms{U}}
			\put(-11,18){\ms{V}} \in \End_{\mathscr{C}}((V, +)).$$
 \end{enumerate}
  \end{Def}

	The category $\mathscr{C}^{H}$ of $\mathcal{U}_{\xi}^{H}\mathfrak{sl}(2|1)$-modules is $\mathit{G}$-modular relative to $\mathcal{X}$. Indeed, we have $\mathscr{C}^{H}$ being $\mathit{G}$-graded by $\mathit{G}=\mathbb{C}/\mathbb{Z}\times \mathbb{C}/\mathbb{Z}$. We set $Z=\mathbb{Z} \times \mathbb{Z}$ and $\{{\varepsilon^{n}}\}_{n \in Z}$ the set of simple highest weight modules $n=(n_{1}\ell, n_{2}\ell)$, i.e. $\varepsilon^{n}$ is a $\mathcal{U}_{\xi}^{H}\mathfrak{sl}(2|1)$-module of dimension 1 (with the basis $\{w\}$) determined by $h_{1}w=n_{1}\ell w, h_{2}w=n_{2}\ell w, e_{i}w=f_{i}w=0$. Because $c_{\varepsilon^{m}, \varepsilon^{n}}=\tau $ and $\theta_{\varepsilon^{n}}=\Id$, the two conditions (1) and (2) of the Definition \ref{G-modrel} are satisfied.

	We consider a typical module $V_{\mu}$. We have $c_{\varepsilon^{n}, V_{\mu}}(w \otimes w_{\rho, \sigma, p})=\tau \circ \mathcal{R}(w \otimes w_{\rho, \sigma, p})=\xi^{-n_{1}\ell\mu_{2}-n_{2}\ell\mu_{1}-2n_{2}\ell\mu_{2}}w_{\rho, \sigma, p} \otimes w$. Next $c_{ V_{\mu}, \varepsilon^{n}} \circ c_{\varepsilon^{n}, V_{\mu}}(w \otimes w_{\rho, \sigma, p})=c_{ V_{\mu}, \varepsilon^{n}}(\xi^{-n_{1}\ell\mu_{2}-n_{2}\ell\mu_{1}-2n_{2}\ell\mu_{2}}w_{\rho, \sigma, p} \otimes w)=\xi^{-2n_{1}\ell\mu_{2}-2n_{2}\ell\mu_{1}-4n_{2}\ell\mu_{2}}w \otimes w_{\rho, \sigma, p}=\xi^{-2\ell(\mu_{2}n_{1}+(\mu_{1}+2\mu_{2})n_{2})}w \otimes w_{\rho, \sigma, p}.$ So we can determine the $\mathbb{Z}$-bilinear application $\mathit{G}\times Z \rightarrow \mathbb{C}^{\times}, (\overline{\mu},n) \mapsto \xi^{-2\ell(\mu_{2}n_{1}+(\mu_{1}+2\mu_{2})n_{2})}$ which satisfies $c_{ V_{\mu}, \varepsilon^{n}} \circ c_{\varepsilon^{n}, V_{\mu}}(w \otimes w_{\rho, \sigma, p})=\xi^{-2\ell(\mu_{2}n_{1}+(\mu_{1}+2\mu_{2})n_{2})}\Id_{\varepsilon^{n} \otimes V_{\mu}}(w \otimes w_{\rho, \sigma, p})$. This means that we have condition (3) of the definition. Condition (4) is also satisfied with $\mathcal{X}=\mathit{G}_{s}=\left\{\overline{0}, \overline{\frac{1}{2}}\right\}\times \mathbb{C}/ \mathbb{Z} \cup \mathbb{C}/ \mathbb{Z} \times \left \{\overline{0}, \overline{\frac{1}{2}}\right\} \cup \left\{(\overline{\mu_1}, \overline{\mu_2}): \overline{\mu_1} + \overline{\mu_2}\in\left\{\overline{0}, \overline{\frac{1}{2}}\right\}\right\}$. It was proven that $\mathscr{C}_{g}^{H}$ is semi-simple for $g \in \mathit{G} \setminus \mathit{G}_{s}$ (Theorem \ref{dl2}) and $V_{\mu} \otimes \varepsilon^{n} \simeq V_{\mu + \ell n}$, i.e. 
the condition (5) is satisfied. Theorem \ref{main theo} implies that condition (6) is true.

	To compute $\Delta_{-}$, we first use the graphical calculus
\begin{align*}
F\left( \epsh{Figure51}{10ex}
 		\put(-35,2){\ms{\Omega_{\overline{\mu}}}}
 		\put(-20,-22){\ms{V_{\mu}}}\right) 
 		&= \sum_{s,t=0}^{\ell-1}d(\mu_{st})F\left( \epsh{Figure51}{10ex}
 		\put(-38,2){\ms{V_{\mu_{st}}}}
 		\put(-20,-22){\ms{V_{\mu}}}\right)\\
 		&=\sum_{s,t=0}^{\ell-1}d(\mu_{st}) \left\langle \theta_{V_{\mu}}^{-1}\right\rangle \left\langle \theta_{V_{\mu_{st}}^{*}}^{-1}\right\rangle F \left(\epsh{Figure6}{7ex}
      \put(-11,-7){\ms{V_{\mu_{st}}}}
			\put(-11,18){\ms{V_{\mu}}} \right)\\
		&=\sum_{s,t=0}^{\ell -1}d(\mu_{st}) \left\langle \theta_{V_{\mu}}^{-1}\right\rangle \left\langle \theta_{V_{\mu_{st}}^{*}}^{-1}\right\rangle S^{'}(\mu_{st},\mu)\Id_{V_{\mu}}.
\end{align*}
%%--------------------
	We have $$\left\langle \theta_{V_{\mu}}^{-1}\right\rangle=-\xi^{2(\alpha_{2}^{2}+\alpha_{1}\alpha_{2})}, \left\langle \theta_{V_{\mu_{st}}^{*}}^{-1}\right\rangle = -\xi^{2((\alpha_{2}+t)^{2}+(\alpha_{1}+s)(\alpha_{2}+t))}$$ and  
	$S^{'}(\mu_{st},\mu)=\xi^{-4\alpha_{2}(\alpha_{2}+t)-2(\alpha_{2}(\alpha_{1}+s)+\alpha_{1}(\alpha_{2}+t))} \dfrac{1}{\ell d(\mu)}$.\\
	 Thus  
\begin{align*}
F\left( \epsh{Figure51}{10ex}
 		\put(-35,2){\ms{\Omega_{\overline{\mu}}}}
 		\put(-20,-22){\ms{V_{\mu}}}\right)&=\sum_{s,t=0}^{\ell -1}\frac{d(\mu_{st})}{\ell d(\mu)}\xi^{2(t^{2}+st)}\Id_{V_{\mu}} \\
 		&=\frac{1}{\ell d(\mu)\{ \ell \alpha_{1} \}}\sum_{s,t=0}^{\ell -1} \dfrac{\{ \alpha_{1}+s \}}{\{ \alpha_{2}+t \}\{ \alpha_{1}+ \alpha_{2}+s +t \}}\xi^{2(t^{2}+st)}\Id_{V_{\mu}}\\
 		&=\frac{1}{\ell d(\mu)\{ \ell \alpha_{1} \}}\sum_{s,t=0}^{\ell -1} \left( \frac{\xi^{-(\alpha_{2}+t)}}{\{ \alpha_{2}+t \}} - \frac{\xi^{-(\alpha_{1}+ \alpha_{2}+s +t)}}{\{ \alpha_{1}+ \alpha_{2}+s +t \}}\right)\xi^{2(t^{2}+st)}\Id_{V_{\mu}}.
\end{align*}
%---------------------------
Because
\begin{align*}
\sum_{s,t=0}^{\ell-1} \frac{\xi^{-(\alpha_{2}+t)}\xi^{2(t^{2}+st)}}{\{ \alpha_{2}+t \}}
	&=\sum_{t=0}^{\ell-1}\xi^{2t^{2}}\frac{\xi^{-(\alpha_{2}+t)}}{\{ \alpha_{2}+t \}}\sum_{s=0}^{\ell-1}\xi^{2st}\\
	&=\sum_{t=0}^{\ell-1}\xi^{2t^{2}}\frac{\xi^{-(\alpha_{2}+t)}}{\{ \alpha_{2}+t \}} \ell \delta_{t}^{0}\\
	&=\frac{\ell \xi^{-\alpha_{2}}}{\{ \alpha_{2} \}},
\end{align*}	
\begin{align*}
\sum_{s,t=0}^{\ell-1} \frac{\xi^{-(\alpha_{1}+ \alpha_{2}+s +t)}\xi^{2(t^{2}+st)}}{\{ \alpha_{1}+ \alpha_{2}+s +t \}}
	&=-\sum_{s,t=0}^{\ell-1}\xi^{2(t^{2}+st)} \frac{1}{1-\xi^{2(\alpha_{1}+ \alpha_{2}+s +t)}} \\
	&=-\sum_{s,t=0}^{\ell-1}\xi^{2(t^{2}+st)} \sum_{k=0}^{\infty}\xi^{2k(\alpha_{1}+ \alpha_{2}+s +t)} \\
	&=-\sum_{k=0}^{\infty}\sum_{t=0}^{\ell-1}\xi^{2(t^{2}+k\alpha_{1}+k\alpha_{2}+kt)}\sum_{s=0}^{\ell-1}\xi^{2(k+t)s} \\
	&=-\sum_{k=0}^{\infty}\sum_{t=0}^{\ell-1}\xi^{2(t^{2}+k\alpha_{1}+k\alpha_{2}+kt)} \ell \delta_{t+k \ \text{mod} \ \ell\mathbb{N}}^{0}\\
	&=-\ell \left(1+ \sum_{t=0}^{\ell-1}\xi^{2t^{2}}\sum_{j=1}^{\infty}\xi^{2(\ell j-t)(\alpha_{1}+\alpha_{2}+t)}\right)\\
	&=-\ell \left(1+\sum_{t=0}^{\ell -1}\xi^{-2t(\alpha_{1}+\alpha_{2})} \frac{\xi^{2\ell (\alpha_{1}+\alpha_{2})}}{1-\xi^{2\ell (\alpha_{1}+\alpha_{2})}}\right) \\
		&=-\ell + \dfrac{\ell \xi^{\alpha_{1}+\alpha_{2}}}{\{\alpha_{1}+\alpha_{2}\}}
\end{align*}
then
\begin{align*}
F\left( \epsh{Figure51}{10ex}
 		\put(-35,2){\ms{\Omega_{\overline{\mu}}}}
 		\put(-20,-22){\ms{V_{\mu}}}\right)
 		&=\dfrac{1}{d(\mu)\{ \ell \alpha_{1} \}} \left(\dfrac{1}{\xi^{\alpha_{2}}\{ \alpha_{2} \}}-\dfrac{\xi^{\alpha_{1}+\alpha_{2}}}{\{\alpha_{1}+\alpha_{2}\}} +1 \right)\Id_{V_{\mu}}\\
 		&=\dfrac{1}{\{\alpha_{1}\}} \left(\{\alpha_{1}+\alpha_{2}\}\xi^{-\alpha_{2}}-\{\alpha_{2}\}\xi^{\alpha_{1}+\alpha_{2}}+\{\alpha_{2}\}\{\alpha_{1}+\alpha_{2}\}\right)\Id_{V_{\mu}} \\
 		&=\dfrac{1}{\{\alpha_{1}\}}\{\alpha_{1}\}=\Id_{V_{\mu}}.
\end{align*}
This means that $\Delta_{-}=1$.

By using the automorphism $\omega$ of superalgebra $\mathcal{U}_{\xi}\mathfrak{sl}(2|1)$ where $\omega(e_{i})=(-1)^{\deg e_{i}}f_{i}, \omega(f_{i})=(-1)^{\deg f_{i}}e_{i}, \omega(k_{i})=k_{i}^{-1}, \omega(h_{i})=-h_{i}, i=1,2$ and computing we also have $\Delta_{+}=1$.
 Condition (8) is obviously true.

Hence category $\mathscr{C}^{H}$ is relatively $\mathit{G}$-modular.
%%%%%%%%%%%%%%%%%%%%%%%%%%%%%%%%%%%%%%%%%%%%%%%%%%%%%%%%%%%%%%%%%%%%%%%%%%%%%%%%%%%%%%%%%%%%%%%%%%%%%%%%%%%%%%%%%%%%%%%%%%%%%%%%%%%%%%%%%%%%
\subsection{Invariants of 3-manifolds}
\begin{Def}
Let $(M, T, \omega)$ be a triple where $M$ is a compact connected oriented $3$-manifold, $T \subset M$ is a $\mathscr{C}^{H}$-colored ribbon graph (possibly empty) and $\omega \in H^{1}(M \setminus T, \mathit{G})$.
\begin{enumerate}
\item The triple $(M, T, \omega)$ is {\em compatible} if each edge $e$ of $T$ is colored by an element of $\mathscr{C}_{\omega(m_{e})}$ where $m_{e}$ is an oriented meridian of the edge $e$.
\item Let $L \cup T \subset S^{3}$ where $L$ is an oriented link in $S^{3} \setminus T$ which gives a presentation of $(M,T)$ by surgery. The presentation $L \cup T$ is {\em computable} if for each component $L_{i}$ of $L$ 
whose meridian is denoted $m_{i}$, we have $\omega(m_{i}) \notin \mathcal{X}$.
\end{enumerate}
\end{Def}

We suppose that $(M, T, \omega)$ is a compatible triple.
\begin{Def}
The formal linear combination $\Omega_{\overline{\mu}}=\sum_{\mu_{i} \in \overline{\mu}}d(V_{\mu_{i}})V_{\mu_{i}}$ is a Kirby color of degree $\overline{\mu} \in \mathit{G} \setminus \mathit{G}_{s}$ if $\{V_{\mu_{i}} \}$ is a set of representatives of simple $Z$-orbits of $\mathscr{C}_{\overline{\mu}}$.
\end{Def}
\begin{The}
Let $(M, T, \omega)$ a compatible triple admitting a computable presentation $L \cup T \subset S^{3}$ then
$$ N(M, T, \omega)=F^{'}(L_{\omega} \cup T)$$
is a well defined topological invariant, i.e. 
depends only on the diffeomorphism class of the triple $(M, T, \omega)$ where $L_{\omega}$ is obtained as the link $L$ in which we have colored the $i$-th component $L_{i}$ by a Kirby color of degree $\omega(m_{i})$ where $m_{i}$ is a meridian of $L_{i}$.
\end{The}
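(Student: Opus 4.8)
\medskip
\noindent\emph{Proof strategy.} The plan is to reduce the statement to the invariance of $F'$ under the moves that relate two surgery presentations of a pair $(M,T)$, using that $\mathscr{C}^{H}$ has been shown above to be $\mathit{G}$-modular relative to $\mathcal{X}=\mathit{G}_{s}$ with the trivial normalization $\Delta_{+}=\Delta_{-}=1$. Granting relative modularity, $N(M,T,\omega)$ is precisely the $3$-manifold invariant produced by the general construction of \cite{FcNgBp14}, so it is enough to verify that the right-hand side is unchanged by the elementary operations on computable presentations; the normalization $\Delta_{\pm}=1$ is what lets us write $N(M,T,\omega)=F'(L_{\omega}\cup T)$ with no correction factor.

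First I would check that $F'(L_{\omega}\cup T)$ is defined. Each Kirby color $\Omega_{\overline{\mu}}$ with $\overline{\mu}\in \mathit{G}\setminus \mathit{G}_{s}$ is a linear combination of typical modules, which are projective by Theorem \ref{dl2}; hence as soon as $L\neq \emptyset$ the colored graph $L_{\omega}\cup T$ carries a strand colored by an object of $\texttt{Proj}$ and is admissible, so Theorem \ref{dl F} applies. If $L=\emptyset$ one first stabilizes by a disjoint $0$-framed unknot colored by some $\Omega_{\overline{\mu}}$ with $\overline{\mu}\notin \mathit{G}_{s}$, which exists because $\mathcal{X}=\mathit{G}_{s}\neq \mathit{G}$ by condition (4) of Definition \ref{G-modrel}. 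Isotopy invariance of the value is then immediate from functoriality of $F$ and the trace properties of $t$.

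Next comes invariance under the two Kirby moves. For a handle-slide of a component $L_{j}$ over a component $L_{i}$, the slid component acquires a Kirby color of the new degree and $F'(L_{\omega}\cup T)$ is unchanged: this is the sliding property of $\Omega_{\overline{\mu}}$, which rests on the relative modularity of $\mathscr{C}^{H}$, concretely on the non-degeneracy of the morphisms $S'(\mu,\mu')$ (Proposition \ref{tinh S'} and Definition \ref{dn d}) together with the fact --- the very computation that produced $\Delta_{\pm}=1$ --- that encircling a $V_{\mu}$-colored strand by $\Omega_{\overline{\mu}}$ acts by a scalar. For a stabilization, adding a disjoint $(\pm 1)$-framed unknot colored by a Kirby color multiplies $F'(L_{\omega}\cup T)$ by $\Delta_{\pm}$, hence by $1$, and one may always take the degree of that auxiliary unknot outside $\mathcal{X}$. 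Thus $F'(L_{\omega}\cup T)$ is invariant under isotopy, orientation change, handle-slide and stabilization.

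It remains to control the cohomology class, and this is where I expect the main obstacle. A computable presentation $L\cup T$ determines $\omega$ through its values $\omega(m_{i})\in \mathit{G}\setminus \mathcal{X}$ on the meridians of the $L_{i}$, and compatibility forces the edge colors of $T$ into the prescribed graded pieces; one checks directly that under each move these data transform so that the new presentation stays compatible and so that a slid or added component carries exactly the degree dictated by $\omega$. The substantive point is that any two \emph{computable} presentations of the same triple are joined by such moves \emph{through computable presentations}: the classical Kirby/Fenn--Rourke calculus connects the underlying framed links, and one must refine it so that every intermediate link has all its surgery meridians outside $\mathcal{X}$. This is exactly where hypothesis (4) of Definition \ref{G-modrel} --- that $\mathit{G}$ cannot be covered by finitely many translated copies of $\mathcal{X}$ --- is essential: at each step there are only finitely many forbidden cosets to avoid, so one can always choose auxiliary components and slides keeping the presentation computable. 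This refinement of Kirby calculus is the content of the corresponding theorem in \cite{FcNgBp14}; applying it to $\mathscr{C}^{H}$, which we have verified satisfies all the axioms of a relative $\mathit{G}$-modular category, shows that $F'(L_{\omega}\cup T)$ depends only on the diffeomorphism class of $(M,T,\omega)$, and hence that $N(M,T,\omega)$ is a well-defined topological invariant.
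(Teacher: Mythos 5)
Your proposal takes essentially the same route as the paper, which in fact states this theorem with no proof at all: once the preceding subsection has verified that $\mathscr{C}^{H}$ satisfies axioms (1)--(8) of a relative $\mathit{G}$-modular category with $\mathcal{X}=\mathit{G}_{s}$ and $\Delta_{\pm}=1$, the theorem is cited directly as the main result of \cite{FcNgBp14} applied to $\mathscr{C}^{H}$. Your sketch of what that black box contains (admissibility of $L_{\omega}\cup T$ via projectivity of typicals, sliding of Kirby colors, stabilization controlled by $\Delta_{\pm}=1$, and the refined Kirby calculus through computable presentations made possible by axiom (4)) is accurate and is a helpful expansion of what the paper leaves implicit. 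One small slip: when $L=\emptyset$ you propose stabilizing by a disjoint $0$-framed unknot, but a $0$-framed unknot unlinked from everything else performs surgery giving $M\#(S^1\times S^2)$, so it changes the manifold; the trivializing stabilization move uses a $(\pm 1)$-framed unknot, and it is precisely $\Delta_{\pm}=1$ that makes this cost-free. With that correction your argument matches the intended one.
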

%%%%%%%%%%%%%%%%%%%%%%%%%%%%%%%%%%%%%%%%%%%%%%%%%%%%%%%%%%%%%%%%%%%%%%%%%%%%%%%%%%%%%%%%%%%%%%%%%%%%%%%%%%%%%%%%%%%%%%%%%%%%%%%%%%%%%%%%%%%%%%%%%%%%%%%%%%%%%%%%%%%%%%%%%
\subsection{Example} 
We consider an example in the case $\ell = 3$. Let $M$ be the lens space $L(5,2)$ which is given by surgery presentation on the Hopf link $L$ (Figure \ref{Hopf 32}). It has two oriented components  $L_i, i=1,2$ with framings $3, 2$ and let $m_i$ be an oriented meridian of $L_i$. 
\begin{figure}
   \centering
   $$
   \begin{array}{ccc}
      \epsh{Hopfdessin1}{12ex}
 		\put(-102,-22){\ms{\Omega_{\overline{\mu}}}}
 		\put(-2,-22){\ms{\Omega_{\overline{\mu}^{'}}}}
 		\put(-96,19){\ms{3}}
 		\put(-9,22){\ms{2}}
   \end{array}
   $$
    %%Blabla.
	\caption{}
	\label{Hopf 32}
 \end{figure} 
The linking matrix of $L$ with respect to the components $L_i$ is 
$$\lk=\begin{pmatrix} 3&1 \\ 1&2 \end{pmatrix}.$$

Let $\omega \in H^{1}(M \setminus T, \mathit{G})$ and suppose that the triple $(M, \emptyset, \omega)$ is computable. We compute the values $\omega=(\omega^1, \omega^2)$ where $\overline{\mu}=\omega^1=\omega(m_{1}), \overline{\mu}^{'}=\omega^2=\omega(m_{2})$ from the equations $3\overline{\mu}+\overline{\mu}^{'}=0$ and $\overline{\mu}+2\overline{\mu}^{'}=0$ (in $\mathbb{C}/\mathbb{Z} \times \mathbb{C}/\mathbb{Z}$). Hence $\overline{\mu}=(\frac{k}{5}, \frac{2k}{5}), \overline{\mu}^{'}=(\frac{2k}{5}, \frac{4k}{5}), k=1,...,4$. Here we set $\omega_k=(\omega_{k}^1, \omega_{k}^2),\  \omega_{k}^1=(\frac{k}{5}, \frac{2k}{5}),\  \omega_{k}^2=(\frac{2k}{5}, \frac{4k}{5}), k=1,...,4$. We have $\omega_{4}=-\omega_{1},\  \omega_{3}=-\omega_{2}$. Using variables as in Lemma \ref{kihieu alpha} we have $(\alpha_{1}, \alpha_{2})=\overline{\mu}+(-\ell+1, \frac{\ell}{2})=(\frac{k}{5}-2, \frac{2k}{5}+\frac{3}{2}), (\alpha_{1}^{'}, \alpha_{2}^{'})=\overline{\mu}^{'}+(-\ell+1, \frac{\ell}{2})=(\frac{2k}{5}-2, \frac{4k}{5}+\frac{3}{2})$.

We color the $i$-th component $L_i$ by a Kirby color of degree $\omega(m_{i})$, i.e. $\Omega_{\omega(m_{1})}=\Omega_{\overline{\mu}}=\sum_{s,t=0}^{2}d(\alpha_{st})V_{\alpha_{st}}$ and $\Omega_{\omega(m_{2})}=\Omega_{\overline{\mu}^{'}}=\sum_{i,j=0}^{2}d(\alpha_{ij}^{'})V_{\alpha_{ij}^{'}}$ where $\alpha_{st}=(\alpha_{1}+s, \alpha_{2}+t), \alpha_{ij}^{'}=(\alpha_{1}^{'} +i, \alpha_{2}^{'}+j)$. By Lemma \ref{kihieu alpha}, Proposition \ref{tinh S'} we have
$$N(M, \emptyset, \omega)=\sum_{s,t}\sum_{i,j}d(\alpha_{st}) d(\alpha_{ij}^{'})\left<\theta_{V_{\alpha_{st}}} \right>^{3} \left<\theta_{V_{\alpha_{ij}^{'}}} \right>^{2} d(\alpha_{st}) S^{'}(\alpha_{ij}^{'}, \alpha_{st})$$
in which 
\begin{align*}
d(\alpha_{st})&=\frac{\{\alpha_1+s\}}{\ell\{\ell(\alpha_1+s)\}\{\alpha_2+t\}\{\alpha_1+\alpha_2+s+t\}},\\
\left<\theta_{V_{\alpha_{st}}} \right> &=-\xi^{-2\left( (\alpha_2+t)^{2}+(\alpha_1+s)(\alpha_2+t)\right)},\\
\left<\theta_{V_{\alpha_{ij}^{'}}} \right> &=-\xi^{-2\left( (\alpha_2^{'}+j)^{2}+(\alpha_1^{'}+i)(\alpha_2^{'}+j)\right)},\\
S^{'}(\alpha_{ij}^{'}, \alpha_{st}) &=\frac{1}{\ell d(\alpha_{st})}\xi^{-4(\alpha_2^{'}+j)(\alpha_2+t)-2\left( (\alpha_2^{'}+j)(\alpha_{1}+s)+(\alpha_1^{'}+i)(\alpha_2+t)\right)}.
\end{align*}
Using computer algebra software Sagemath, we have ($\xi^{\frac{1}{10}}$ has degree $8$ over $\mathbb{Q}$)
\begin{align*}
N(M, \emptyset, \pm\omega_1)&=\frac{1}{15}\left(-2\xi^{\frac{7}{10}}-2\xi^{\frac{3}{5}}-2\xi^{\frac{1}{2}}+2\xi^{\frac{2}{5}}+5\xi^{\frac{3}{10}}+2\xi^{\frac{1}{10}}\right),\\
N(M, \emptyset, \pm\omega_2)&=\frac{1}{15}\left(-7\xi^{\frac{7}{10}}-2\xi^{\frac{3}{5}}+4\xi^{\frac{1}{2}}+4\xi^{\frac{2}{5}}+2\xi^{\frac{3}{10}}+5\xi^{\frac{1}{10}}-4\right).
\end{align*}
In this case, the result $N(M, \emptyset, \omega)=N(M, \emptyset, -\omega)$ is consistent with $(M, \emptyset, \omega)\simeq (M, \emptyset, -\omega)$.

\bibliographystyle{plain} %\itemsep-0.5pt
\bibliography{TKhao}%ten file Tai lieu tham khao
\end{document}